\newcommand{\norm}[1]{\lVert#1\rVert}
\pgfplotsset{width=7cm,compat=1.8}
\tikzset{
  knot diagram/every strand/.append style={
    ultra thick,
    red
  },
  show curve controls/.style={
    postaction=decorate,
    decoration={show path construction,
      curveto code={
        \draw [blue, dashed]
        (\tikzinputsegmentfirst) -- (\tikzinputsegmentsupporta)
        node [at end, draw, solid, red, inner sep=2pt]{};
        \draw [blue, dashed]
        (\tikzinputsegmentsupportb) -- (\tikzinputsegmentlast)
        node [at start, draw, solid, red, inner sep=2pt]{}
        node [at end, fill, blue, ellipse, inner sep=2pt]{}
        ;
      }
    }
  },
  show curve endpoints/.style={
    postaction=decorate,
    decoration={show path construction,
      curveto code={
        \node [fill, blue, ellipse, inner sep=2pt] at (\tikzinputsegmentlast) {}
        ;
      }
    }
  }
}
\newcommand{\Q}{{\mathbb Q}}
\newcommand{\R}{{\mathbb R}}
\newcommand{\C}{{\mathbb C}}
\spnewtheorem*{CSL}{Curve Selection Lemma}{\bf}{\it}
\spnewtheorem*{Lem}{Lemma}{\bf}{\it}
\begin{document}

\title*{An introduction to Lipschitz geometry of
  complex singularities}
\author{Anne Pichon}
\institute{Anne Pichon \at Aix Marseille Universit\'e, CNRS, Centrale Marseille, I2M, UMR 7373, 13453 Marseille, FRANCE \email{anne.pichon@univ-amu.fr}
}
%
%
\maketitle

\abstract{
  The aim of this paper to introduce the reader to a recent point of view on the  Lipschitz classifications of complex singularities. 
It presents the complete classification of Lipschitz geometry of complex plane curves singularities and in particular, it introduces the so-called bubble trick, which is a key tool to study Lipschitz geometry of germs. It  describes also  the thick-thin decomposition of a normal complex surface singularity and built two geometric decompositions of a normal surface germ into standard pieces  which are invariant by respectively  inner and outer   bilipschitz homeomorphisms.  This leads in particular to the complete classification of Lipschitz geometry for the inner metric.}

\section*{Introduction} \label{sec:introduction}

The aim of this paper is to introduce the reader to a recent point of view on the  Lipschitz classifications of complex singularities. It is an expansion of my notes prepared for the course given at the International school on singularities and Lipschitz geometry, which took place in Cuernavaca (Mexico) from June 11th to 22nd 2018.

The notes  are structured as follows.  Section \ref{sec:preliminaries} explains what is  Lipschitz geometry  for the inner and outer metrics of singularities and why it is interesting for the classification of space singularities. Section \ref{part 2} gives the complete classification of Lipschitz geometry of complex curves and covers the results of \cite{NeumannPichon2014}. In particular, it introduces what we call the bubble trick and the bubble trick with jumps, which are key tools to study Lipschitz geometry of germs. This techniques, which consists in exploring a germ of analytic space $(X,0)$ by using horns centered on a germ of real arc, was pioneered in \cite{HP2003} and used in several recent works (see e.g. \cite{NeumannPichon2012, NeumannPichon2014, FdBHPPS19}). Section \ref{part 3}  describes the thick-thin decomposition of a normal complex surface germ following \cite{BirbrairNeumannPichon2014}. Section   \ref{part 4} describes two geometric decompositions of a normal surface germ into standard pieces  which are invariant by respectively  inner and outer   bilipschitz homeomorphism, following the results of \cite{BirbrairNeumannPichon2014} and \cite{NeumannPichon2012}. This leads in particular to the complete classification of Lipschitz geometry for the inner metric. 

The paper contains a lot of detailed examples which were presented and discussed  during  the afternoon exercise sessions of the school and also an appendix (Part \ref{appendix}) which gives the computation of the resolution graph of a surface singularity with equation $x^2+f(y,z)=0$ following Hirzebruch-Jung and Laufer's method. This enables the readers to produce a lot of examples by themself.


In these notes, I do not give the detailed proofs of the invariance of the inner and outer Lipschitz decompositions (Theorem \ref{th:classification} and Theorem \ref{thm:outer invariant}). We refer to \cite{BirbrairNeumannPichon2014} and \cite{NeumannPichon2012} respectively. However, it has to be noted that even if  the two statements   look similar, the techniques used in the proofs are radically different. The invariance of the inner decomposition uses the Lipschitz invariance of fast loops (introduced in  Section \ref{part 3}) of minimal length in their homology class (\cite[Section 14]{BirbrairNeumannPichon2014}) while that of the outer  invariance uses  sophisticated bubble trick arguments (\cite{NeumannPichon2012}). 

Notice that   the pioneering paper \cite{BirbrairNeumannPichon2014} is written for a normal complex surface, as well as the initial version of    \cite{NeumannPichon2012}. However,  the extensions of the inner and outer  geometric decompositions  to the general case of a reduced singularity (not necessarily   isolated) are fairly easy. A version of \cite{NeumannPichon2012} in this general setting will appear soon. 

Finally, notice that the inner and outer geometric decompositions are the analogs of the pizza decompositions of a real surface germ.  In the real surface case, these decompositions give complete classifications for the inner and outer metrics. As already mentioned, the inner decomposition in the complex case also gives a complete classification after adding a few more invariants (Theorem \ref{th:classification}). In contrast, a complete classification for the outer metric of complex surface singularities would need more work and is still an open question.

\section{Preliminaries} \label{sec:preliminaries}
 
\subsection{What is Lipschitz geometry of singular spaces?} \label{subsec:what is}

In the sequel, $\mathbb K $ will denote either $ \mathbb R$ or $\mathbb C$.

Let $(X,0)$ be a germ of analytic space  in $\mathbb K^n$  which contains the origin. So $X$ is defined by 
$$X=\{(x_1,\ldots,x_n) \in \mathbb K^n \mid \  f_j(x_1,\ldots,x_n)=0, j=1,\ldots,r\},$$
where the $f_j$'s are  convergent power series,  $f_j \in \mathbb K\{x_1,\ldots,x_n\}$ and $f_j(0)=0$.

 \vskip0,3cm\noindent
 {\bf Question 1.}  How does $X$ look in a small neighbourhood of the origin?
  \vskip0,3cm

 There are multiple answers to this vague question depending on the category we work in, i.e., on the chosen equivalence relation between germs. 

First, we can consider the topological equivalence relation: 
\begin{definition} 
 Two analytic germs $(X,0)$ and $(X',0)$ are {\bf topologically equivalent} if there exists a germ of  homeomorphism $\psi \colon (X,0)  \to (X',0)$. The {\bf topological type} of $(X,0)$ is the equivalence class of $(X,0)$ for this equivalence relation.
 
  Two analytic germs $(X,0) \subset (\mathbb K^n,0)$ and $(X',0) \subset (\mathbb K^n,0)$  are {\bf topologically equisingular} \index{equisingularity!topological}    if there exists a germ of  homeomorphism $\psi \colon (\mathbb K^n,0)  \to (\mathbb K^n,0)$ such that $\psi(X)=X'$. We call {\bf embedded topological type} \index{embedded topological type} of $(X,0)$ the equivalence class of $(X,0)$ for this equivalence relation.
 \end{definition}
 The  embedded topological type  of $(X,0) \subset (\mathbb R^n,0)$ is completely determined by the embedded topology of its link as stated in the following  famous Conical Structure Theorem: 

\begin{theorem} [Conical Structure Theorem]  \index{Conical Structure Theorem} Let $B^n_{\epsilon}$ be the ball with radius $\epsilon>0$ centered at the origin of $\R^n$ and let $S^{n-1}_{\epsilon}$ be its boundary.  

Let $(X,0) \subset (\mathbb R^n,0)$ be an analytic germ.  For $\epsilon>0$, set $X^{(\epsilon)}=S^{n-1}_{\epsilon} \cap X$. 
 There exists $\epsilon_0>0$ such that for every $\epsilon >0$ with $0 < \epsilon \leq \epsilon_0$, the pair $(B^n_{\epsilon}, X \cap B^n_{\epsilon})$ is homeomorphic to the pair 
 $( B^n_{\epsilon_0}, Cone (X^{(\epsilon_0)}))$, where $Cone (X^{(\epsilon_0)})$ denotes the cone over $X^{(\epsilon_0)}$, i.e., the union of the segments $[0, x]$ joining the origin to a  point $x\in X^{(\epsilon_0)}$. 
\end{theorem}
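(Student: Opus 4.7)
The plan is to exploit the fact that for sufficiently small $\epsilon_0$, every sphere $S^{n-1}_\epsilon$ with $0<\epsilon\leq\epsilon_0$ is transverse (in a stratified sense) to $X$; the conical structure then comes from integrating a radial vector field. More precisely, I would first fix a Whitney stratification $\{S_\alpha\}$ of a neighbourhood of $0$ in $\mathbb{R}^n$ compatible with $X$ and having $\{0\}$ as a $0$-dimensional stratum. Consider the square distance function $\rho(x)=\|x\|^2$. The goal of the first step is to show that there exists $\epsilon_0>0$ such that for every stratum $S_\alpha\neq\{0\}$ meeting $B^n_{\epsilon_0}$, the restriction $\rho|_{S_\alpha}$ has no critical point in $B^n_{\epsilon_0}\setminus\{0\}$. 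Equivalently, at every such point $x$, the gradient $\mathrm{grad}\,\rho(x)=2x$ is not normal to $T_xS_\alpha$.

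The key step, and the main obstacle, is the non-accumulation of critical points of $\rho|_{S_\alpha}$ at $0$. I would argue by contradiction using the Curve Selection Lemma. If critical points accumulate at $0$ in some stratum $S_\alpha$, the set of critical points of $\rho|_{S_\alpha}$ is semi-analytic and has $0$ in its closure, so the Curve Selection Lemma provides a real analytic arc $\gamma\colon[0,\eta)\to\overline{S_\alpha}$ with $\gamma(0)=0$ and $\gamma(t)\in S_\alpha$ a critical point of $\rho|_{S_\alpha}$ for $t\in(0,\eta)$. Computing
\begin{equation*}
\frac{d}{dt}\,\rho(\gamma(t))=2\langle\gamma(t),\gamma'(t)\rangle,
\end{equation*}
criticality forces $\gamma'(t)\in T_{\gamma(t)}S_\alpha$ to be orthogonal to $\mathrm{grad}\,\rho(\gamma(t))=2\gamma(t)$, so $\rho\circ\gamma$ is constant. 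This contradicts $\rho(\gamma(0))=0<\rho(\gamma(t))$ for $t>0$.

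Once the transversality is established, I would build the homeomorphism by a stratified radial flow. On each stratum $S_\alpha$ define the vector field $v_\alpha=\mathrm{grad}\,(\rho|_{S_\alpha})/\|\mathrm{grad}\,(\rho|_{S_\alpha})\|^2$, which is well defined on $B^n_{\epsilon_0}\cap S_\alpha\setminus\{0\}$ and satisfies $d\rho(v_\alpha)=1$, so its flow moves $\rho$ linearly. Using the Whitney conditions, one may choose the $v_\alpha$ compatibly (i.e., so that they fit into a controlled stratified vector field in the sense of Thom--Mather); this is the content of Thom's first isotopy lemma. Integrating this stratified vector field from the link $X^{(\epsilon_0)}=X\cap S^{n-1}_{\epsilon_0}$ produces a stratum-preserving homeomorphism
\begin{equation*}
\Phi\colon[0,1]\times X^{(\epsilon_0)}\longrightarrow X\cap B^n_{\epsilon_0},\qquad \Phi(\{0\}\times X^{(\epsilon_0)})=\{0\},
\end{equation*}
that collapses $\{0\}\times X^{(\epsilon_0)}$ to the origin and restricts to the identity on $\{1\}\times X^{(\epsilon_0)}$, i.e., a homeomorphism of pairs $(B^n_{\epsilon_0},X\cap B^n_{\epsilon_0})\cong(B^n_{\epsilon_0},\mathrm{Cone}(X^{(\epsilon_0)}))$.

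The main technical difficulty lies in the gluing of the pieces of flow across strata into a continuous map on all of $X\cap B^n_{\epsilon_0}$: the vector fields $v_\alpha$ blow up as one approaches a lower-dimensional stratum, so their compatibility must be engineered through the Whitney $(a)$ and $(b)$ conditions and the standard Thom--Mather machinery. Everything else (the Curve Selection argument and the final identification with the cone) is fairly mechanical once this stratified isotopy is in hand.
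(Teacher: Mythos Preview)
The paper does not supply its own proof of the Conical Structure Theorem; it is quoted as a classical background result and used without argument. So there is nothing to compare against, and your task reduces to whether your sketch is sound.

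Your outline is the standard one (essentially Milnor's argument extended to the singular case via Whitney stratifications and the Thom--Mather isotopy machinery), and the ingredients are correctly identified: transversality of small spheres to the strata via the Curve Selection Lemma, then a controlled radial vector field whose flow realises the cone structure. The Curve Selection step is fine as written. One small remark: to get the \emph{pair} homeomorphism $(B^n_{\epsilon}, X\cap B^n_{\epsilon})\cong (B^n_{\epsilon_0},\mathrm{Cone}(X^{(\epsilon_0)}))$ you need the stratified vector field on all of $B^n_{\epsilon_0}\setminus\{0\}$, tangent to each stratum (hence preserving $X$), not just on $X$; your construction of $v_\alpha$ on each stratum already does this, but your map $\Phi$ as displayed only has domain $[0,1]\times X^{(\epsilon_0)}$, which gives the cone structure on $X$ alone rather than on the pair. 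You should integrate the ambient controlled field to get $\Phi\colon [0,1]\times S^{n-1}_{\epsilon_0}\to B^n_{\epsilon_0}$ carrying $[0,1]\times X^{(\epsilon_0)}$ onto $X\cap B^n_{\epsilon_0}$. With that adjustment the sketch is correct.
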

In other words,  the homeomorphism  class of the pair $(S^{n-1}_{\epsilon}, X^{(\epsilon)})$ does not depend on $\epsilon$ when $\epsilon>0$ is sufficiently small and  it determines completely  the embedded topological type of $(X,0)$. 
\begin{definition} When $0 < \epsilon \leq \epsilon_0$, the intersection $X^{(\epsilon)}$ is called  the {\bf link} of $(X,0)$. 
 \end{definition}
 
 \begin{example}  
  
 \begin{enumerate}
\item \label{ex1} Assume that $X$ is the real cusp in $\R^2$ with equation $x^3-y^2=0$. Then its link at $0$ consists of two points in the circle $S^1_{\epsilon}$. 
\item \label{ex2} If  $X$ is the complex cusp in $\C^2$ with equation $x^3-y^2=0$, its link at $0$  is the trefoil knot in  the $3$-sphere $S^3_{\epsilon}$. 
\item \label{ex3}  If  $X$ is the complex surface $E_8$    in $\C^3$ with equation $x^2+y^3+z^5=0$, its (non embedded) link  at $0$  is a Seifert manifold whose homeomorphism class is completely described through plumbing theory  by its minimal resolution graph. 
The resolution graph is explicitely computed in the appendix \ref{appendix} of the present notes. 
\end{enumerate}
  \end{example}
  
 The Conical Structure Theorem gives a complete answer to Question 1 in the topological category, but it completely ignores the geometric properties of the set $(X,0)$. In particular, a very interesting question is:

 \vskip0,3cm\noindent
 {\bf Question 2.} 
  How does the link $ X^{(\epsilon)}$ evolve metrically as $\epsilon$ tends to $0$?
   \vskip0,3cm

 In other words,  is $X\cap B_{\epsilon}$  bilipschitz  homeomorphic to the straight cone $Cone(X^{(\epsilon)})$? Or are there some  parts of $X^{(\epsilon)}$ which shrink faster than linearly when $\epsilon$ tends to 0? 
 
  This question  can be studied from different points of view depending on the choice of the metric.  If  $(X,0) \subset (\mathbb R^n,0)$ is the germ of a real analytic  space, there are two natural metrics  on $(X,0)$ which are defined from the Euclidean metric of the ambient space $ \mathbb R^n$:
 
 \begin{definition} The {\bf outer metric}  \index{outer metric} $d_o$ on $X$ is  the metric induced by the ambient Euclidean metric, i.e., for all $x,y \in X$, $d_o(x,y)=\lVert   {x-y}  \rVert_{\R^n}$.
 
  The {\bf inner metric}  \index{inner metric}  $d_i$ on $X$ is the length metric defined for all $x,y \in X$ by: $d_i(x,y) = \inf length (\gamma)$, where $\gamma \colon  [0,1] \to X$ varies among   rectifyable arcs on $X$ such that $\gamma(0)=x$ and $\gamma(1)=y$. 
  \end{definition} 
    
    \begin{definition} Let $(M,d)$ and $(M',d')$ be two metric spaces. A map $f \colon M \to M'$ is a  {\bf bilipschitz homeomorphism} if $f$ is a bijection and there exists a real constant $K \geq 1$ such that for all $x,y \in M$, $$\frac{1}{K} d(x,y) \leq d'(f(x),f(y))\leq K  d(x,y).$$
  \end{definition}

  \begin{definition} Two real analytic germs $(X,0) \subset (\R^n,0)$ and $(X',0) \subset (\R^m,0)$ are  {\bf inner Lipschitz equivalent} (resp. {\bf outer Lipschitz equivalent}) if there exists a germ of bilipschitz homeomorphism $\psi \colon (X,0) \to (X',0)$ with respect to the inner (resp. outer) metrics. 
  
  The equivalence classes of the germ $(X,0) \subset (\R^n,0)$  for these equivalence relations are called  respectively  the {\bf inner Lipschitz geometry} and the  {\bf outer Lipschitz geometry} of $(X,0)$.
  \end{definition}
  
    Throughout these notes, we will use the ``big-Theta" asymptotic notations of Bachmann-Landau in the following form: 
   
      \begin{definition} \label{dfn:Bachmann}
     Given two function germs $f,g\colon ([0,\infty),0)\to ([0,\infty),0)$, we say  that $f$ is   {\bf big-Theta}   of $g$ and we write   $f(t) = \Theta (g(t))$ if there exist real numbers $\eta>0$ and $K >0$ such that for all $t$ such that  for all $t \in [0, \eta)$, $\frac{1}{K }g(t) \leq f(t) \leq K g(t)$. 
   \end{definition}
   
   \begin{example} Consider the real cusp $C$ with equation $y^2-x^3=0$ in $\R^2$ (see Figure \ref{fig:1}).   For a real number $t>0$, consider the two points  $p_1(t) = (t^2,t^3)$ and   $p_2(t) = (t^2,-t^3)$ on $C$. Then $d_o(p_1(t), p_2(t)) = \Theta(t^{3/2})$  while  the inner distance  is obtained by taking infimum of  lengths of   paths on $C$ between the two points $p_1(t)$ and $p_2(t)$. The shortest length is obtained by taking a path going through the origin, and we have  $d_i(p_1(t), p_2(t)) = \Theta(t)$. Therefore, taking the limit of the quotient as $t$ tends to $0$, we obtain: 
   $$\frac{d_o(p_1(t), p_2(t))}{d_i(p_1(t), p_2(t))} = \Theta(t^{1/2}) \to 0.$$

    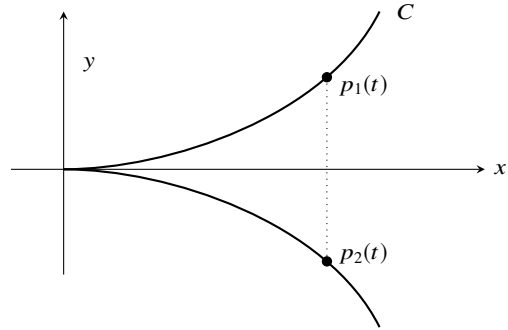
\begin{figure}[ht]
    \centering
\begin{tikzpicture} 
\begin{scope}[scale=0.7]
\draw[thick](0,0) .. controls (2,0) and (5,1)..  (6,3) ;
\draw[thick](0,0) .. controls (2,0) and (5,-1)..  (6,-3) ;

\draw[thin,>=stealth,->](-1,0)--(8,0);
\draw[thin,>=stealth,->](0,-2)--(0,3);

\draw[fill](5,1.75)circle(2.5pt);
\draw[fill](5,-1.75)circle(2.5pt);

\draw[dotted](5,1.75)--(5,-1.75);
    
    \node(a)at(8.3,0){$x$};
    \node(b)at(6.5,3){$C$};
     \node(b)at(5.7,1.6){$p_1(t)$};
          \node(b)at(5.7,-1.6){$p_2(t)$};

              \node(c)at(0.5,2){$y$};
                                 
     \end{scope}             
\end{tikzpicture}
  \caption{The real cusp $y^2-x^3=0$}
    \label{fig:1}
\end{figure}

 \end{example}
 
  Using this, you are ready to make the following:

   \begin{exercise}   
   \begin{enumerate}
\item Prove  that  there is no bilipschitz homeomorphism between the outer and inner metrics on  the real cusp $C$ with equation $y^2-x^3=0$ in $\R^2$.
\item Prove that $(C,0)$ equipped with the inner metric is metrically conical, i.e. bilipschitz equivalent to the cone over its link. 
\end{enumerate}
 
  \end{exercise} 

  \begin{example} \label{example:real surface} Consider the real surface $S$ in $\R^3$ with equation $x^2+y^2-z^3=0$ in $\R^2$.   For a real number $t>0$, consider the two points  $p_1(t) = (t^3,0,t^2)$ and   $p_2(t) = (t^3,0,-t^2)$ on $S$. Then $d_o(p_1(t), p_2(t)) = \Theta(t^{3/2})$. We also have  $d_i(p_1(t), p_2(t)) = \Theta(t^{3/2})$ since $d_i(p_1(t), p_2(t))$ is the length of a half-circle  
  joining  $ p_1(t)$ and  $p_2(t)$ on the circle $\{x=t^3\} \cap S$. 
  
   \end{example}

  \begin{exercise}  Consider the real surface $S$  of Example \ref{example:real surface}. 
  \begin{enumerate}
\item  Prove that  the identity map  is  a bilipschitz homeomorphism between the outer and inner metrics on $(S,0)$.   
\item  Prove that $(S,0)$ equipped with the inner metric is not metrically conical. 
\end{enumerate}

   \end{exercise}
 
\subsection{Independence of the embedding and motivations} \label{subsec:independence embedding}

 If $(X,0)$ is a germ of a real analytic space, the two metrics $d_o$ and $d_i$ defined above obviously depend on the choice of an embedding $(X,0) \subset (\R^n,0)$ since they are defined by using the Euclidean metric of the ambient  $\R^n$. The aim of this section is to give a proof of one of the main results which motivates the study of Lipschitz geometry of singularities:  
 

\begin{proposition} \label{prop:independance}
  The Lipschitz geometries of $(X,0)$ for the outer and inner metrics
  are independent of the embedding $(X,0) \subset (\R^n,0)$.
\end{proposition}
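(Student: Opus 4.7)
The plan is to reduce both statements to the elementary fact that every real-analytic map germ $f\colon(\R^n,0)\to(\R^m,0)$ is Lipschitz on some small closed ball around the origin. This follows from the mean-value inequality applied on a compact ball where $\|Df\|$ is bounded, or equivalently from the Taylor expansion $f(x)=Df(0)\cdot x+O(\|x\|^{2})$, combined with the fact that a convergent power series has bounded partial derivatives on a compact neighborhood of its center.

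Concretely, let $(X_{1},0)\subset(\R^{n},0)$ and $(X_{2},0)\subset(\R^{m},0)$ be two embeddings of the same abstract analytic germ $(X,0)$, and let $\phi\colon(X_{1},0)\to(X_{2},0)$ denote the induced analytic isomorphism of germs. The first step is to extend $\phi$ to an ambient analytic map. Each coordinate function $y_{j}\circ\phi$ is a real-analytic function on the germ $(X_{1},0)$, and hence, by the very definition of an analytic subspace germ, is the restriction of some $F_{j}\in\R\{x_{1},\dots,x_{n}\}$. Assembling these lifts yields an analytic map germ $\Phi\colon(\R^{n},0)\to(\R^{m},0)$ with $\Phi|_{X_{1}}=\phi$. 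Applying the same construction to $\phi^{-1}$ gives an analytic germ $\Psi\colon(\R^{m},0)\to(\R^{n},0)$ with $\Psi|_{X_{2}}=\phi^{-1}$.

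By the preliminary observation, $\Phi$ and $\Psi$ are Lipschitz on small balls around the origin, with constants $K_{1}$ and $K_{2}$ respectively. Restricting to $X_{1}$ and $X_{2}$ yields, for $x,y\in X_{1}$ close to $0$,
$$\tfrac{1}{K_{2}}\,\|x-y\|_{\R^{n}}\leq \|\phi(x)-\phi(y)\|_{\R^{m}}\leq K_{1}\,\|x-y\|_{\R^{n}}.$$
Hence $\phi$ is bilipschitz for the outer metrics, which proves the outer statement. For the inner metrics I would then observe that any outer bilipschitz homeomorphism with constant $K$ distorts the length of every rectifiable arc by at most the factor $K$; taking the infimum over rectifiable paths joining two given points therefore gives the analogous bilipschitz inequality for the inner distances, without any further work.

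The only genuine subtlety is the extension step: one must be comfortable with the standard equivalence between an analytic function on a germ of an analytic subspace and the restriction of an ambient analytic function, so that $\phi$ really does extend to $\Phi$ on an open neighborhood of $0$ in $\R^{n}$. Everything else is a short application of Taylor's formula together with the length-infimum definition of the inner metric.
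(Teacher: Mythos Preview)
Your proof is correct and rests on the same underlying fact as the paper's---that every element of the maximal ideal of $(X,0)$ lifts to an ambient convergent power series, and that analytic maps are Lipschitz on small balls---but the two arguments are organized differently. The paper fixes one system of generators $(f_1,\dots,f_n)$ and adds extra generators one at a time: writing $g=G(f_1,\dots,f_n)$, it observes that the graph of $G$ in $\R^{n+1}$ projects bilipschitzly onto $\R^n$ over any compact neighborhood, so the embedding $(f_1,\dots,f_n,g)$ is bilipschitz equivalent to $(f_1,\dots,f_n)$. This graph trick yields the two-sided estimate in a single stroke. You instead extend the full isomorphism $\phi$ and its inverse $\phi^{-1}$ to ambient analytic germs $\Phi$ and $\Psi$ and invoke their Lipschitz constants separately to get the upper and lower bounds. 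Your route is slightly more direct conceptually, and you also make the inner-metric statement explicit via the arc-length argument, whereas the paper writes out only the outer case and leaves the inner case implicit. The paper's inductive graph argument, on the other hand, avoids having to extend $\phi^{-1}$ as a second step.
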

 
In other words, bilipschitz classes of $(X,0)$  just depend on the analytic type of $(X,0)$.    Before proving this result, let us give some consequences which motivate the study of Lipschitz geometry of germs of singular spaces. 
 
The outer Lipschitz  geometry  determines the inner Lipschitz  geometry  since the inner metric is determined by the outer one through integration along paths. Moreover, the inner Lipschitz geometry obviously determines the topological type  of $(X,0)$. Therefore, an important consequence of Proposition \ref{prop:independance}  is  that the Lipschitz geometries give two intermediate classifications between the analytical type  and the topological type. 

A very  small  amount of analytic invariants are determined by the topological type of an analytic germ (even if one considers the embedded topological type). In particular, a natural question is to ask whether the Lipschitz classification is sufficiently rigid to catch analytic invariants:

  \vskip0,3cm\noindent
 {\bf Question 3.} 
   Which analytical invariants are in fact Lipschitz invariants? 
      \vskip0,3cm

Recent results show that in the case of a complex surface singularity, a large amount of analytic invariants  are determined by the outer Lipschitz geometry. For example, the multiplicity of a complex surface singularity  is an outer Lipschitz invariant (\cite{NeumannPichon2012} for a normal surface, \cite{Sampaio2017} for a hypersurface in $\C^3$ and \cite{FFS} for the general case). However it is now known that the multiplicity is not a Lipschitz invariant in higher dimensions (\cite{BFSV}). In  \cite{NeumannPichon2012} it is shown that many other data are in fact Lipschitz invariants  in the case of surface singularities, such as the geometry of hyperplane sections and the geometry of polar curves and discriminant curves of generic projections (Theorem \ref{th:invariants from geometry}); higher dimensions remain  almost unexplored.  This shows that the  outer Lipschitz class contains potentially a lot of information on the singularity and  that outer Lipschitz geometry of singularities is a very promising area to explore.

Here is another motivation. Analytic types of singular space germs contain continuous moduli, and this is why it is difficult to describe a complete analytic classification.  For example, consider the family of curves germs $(X_t,0)_{t \in \C}$ where $X_t$ is the union of four transversal lines with equation  $xy(x-y)(x-ty)=0$. For every pair $(t,t')$ with $t \neq t'$, $(X_t,0)$ is not analytically equivalent to $(X_{t'},0)$. On the contrary, it is  known since the works of T. Mostowki in the complex case (\cite{mostowski}), and Parusi\'nski in the real case (\cite{Parusinski1988} and \cite{Parusinski1994}),  that  the  outer Lipschitz classification of germs of singular spaces is {\bf tame},   which means that it admits a discrete complete invariant. Then a complete classification of Lipschitz geometry of singular spaces seems to be a more reachable goal.

\begin{proof}[of Proposition \ref{prop:independance}] Let $(f_1,\ldots,f_n)$ and $(g_1,\ldots,g_m)$ be two
  systems of generators of the maximal ideal $\cal M$ of $(X,0)$. We will first prove that the outer
  metrics $d_I$ and $d_J$ for the embeddings 
  \[I= (f_1,\ldots,f_n) \colon (X,0) \to (\R^n,0)\text{\quad and\quad  }
    J = (g_1,\ldots,g_m) \colon (X,0) \to (\R^m,0)\]
  are bilipschitz
  equivalent.  It suffices to prove that the outer metric for the
  embedding $(f_1,\ldots,f_n,g_1,\ldots,g_m)$ is bilipschitz
  equivalent to the metric $d_I$. By induction, we just have to
  prove that for any $g \in \cal M$, the metric $d_{I'}$ associated
  with the embedding
  $I' = (f_1,\ldots,f_n,g)\colon (X,0) \to  (\R^{n+1},0)$ is
  bilipschitz equivalent to $d_I$.

  Since $g$ is in the ideal $\cal M$, it may be expressed as  $G(f_1,\dots,f_n)$ where $G \colon (\R^n,0) \to (\R,0)$ is real analytic. Let $\Gamma$ be the graph of the function $G(x_1, \dots, x_n)$ in $(\R^n,0)\times \R$. It is defined over a neighbourhood of $0$ in $\R^n$. The projection $\pi\colon \Gamma\to  (\R^n,0)$ is bilipschitz over any compact neighbourhood of $0$ in $\R^n$ on which it is defined. We have $I'(X,0)\subset \Gamma\subset (\R^n,0) \times \R$, so $\pi|_{I'(X,0)}\colon I'(X,0)\to I(X,0)$ is bilipschitz for the outer metrics $d_{I'}$ and $d_I$.
\end{proof}

 \section{The Lipschitz geometry of a complex curve singularity} \label{part 2}
 
 \index{complex curve}
 
\subsection{Complex curves have trivial inner Lipschitz geometry} \label{sec:inner curves}

Let  $X \subset \C^2$ be the complex cusp   with equation $y^2-x^3=0$. Let $t \in \R$ and consider the two points $p_1(t) = (t^2, t^3)$ and $p_2(t) = (t^2, -t^3)$ on $X$. Since these two points are on two distinct  strands of the braid $X \cap (S_{|t|}^1 \times \C)$, it  is easy to see that the shortest path in $X$ from $p_1(t)$ to $p_2(t)$ passes through the origin and that $d_i(p_1(t), p_2(t)) = \Theta(t)$.  This suggests that $(X,0)$ is locally inner bilipschitz homeomorphic to the cone over its link. This  means that the  inner Lipschitz geometry tells one no more than the topological type, i.e., the number of connected components of the link (which are circles), and is therefore uninteresting. The aim of this section is to prove this for any complex curve. 

\begin{definition} An analytic germ $(X,0)$ is {\bf metrically conical}  \index{metrically conical}  if  it is   inner Lipschitz homeomorphic to the straight  cone over its link. 
\end{definition}

In this paper, a {\bf complex curve germ} or {\bf complex curve singularity} will mean a germ of reduced  complex analytic space of dimension $1$.

\begin{proposition} \label{prop:inner} Any  complex space curve germ $(C,0)
  \subset (\C^N,0)$ is metrically conical. 
\end{proposition}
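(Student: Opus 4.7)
My plan is to reduce to the case of an irreducible branch, use a normalization parametrization to transport the problem to a disk in $\C$, and establish the bilipschitz cone structure by an explicit comparison of line elements in polar coordinates.

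First I would write $(C,0)$ as the union of its finitely many irreducible branches $C_1,\ldots,C_r$, which meet pairwise only at $0$. Any rectifiable path in $C$ connecting points on distinct branches must pass through the origin, so the inner distance on $C$ splits as $d_i(p,q)=d_i(p,0)+d_i(0,q)$ whenever $p$ and $q$ lie on different branches; the straight cone $\cone(C^{(\epsilon_0)})$ is a bouquet of cones at the apex and satisfies the same splitting identity. Hence it suffices to exhibit, for each branch $C_\alpha$, an origin-preserving bilipschitz homeomorphism from $(C_\alpha,d_i)$ to the straight cone over $C_\alpha^{(\epsilon_0)}$; these maps then glue into a global bilipschitz homeomorphism.

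Next, for one irreducible branch $C_\alpha$ of multiplicity $m$, I would take its normalization, an analytic parametrization $\phi\colon(\C,0)\to(\C^N,0)$ of the form $\phi(u)=au^m+o(u^m)$ with $a\in\C^N\setminus\{0\}$, which restricts to a homeomorphism of a small disk $\Delta_\delta$ onto $C_\alpha$. From this expansion one reads off $\norm{\phi(u)}=\Theta(|u|^m)$ and $\norm{\phi'(u)}=\Theta(|u|^{m-1})$, so the inner length element on $C_\alpha$, pulled back to $\Delta_\delta$, is comparable to $|u|^{m-1}|du|$; in polar coordinates $u=\rho e^{i\theta}$ it is comparable to $\rho^{m-1}\sqrt{d\rho^2+\rho^2\,d\theta^2}$. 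On the other hand, the straight cone over the link $C_\alpha^{(\epsilon_0)}$ (a single circle) is bilipschitz to the model Riemannian cone with line element $dr^2+r^2\,d\theta^2$ on $[0,\epsilon_0]\times\R/2\pi\Z$. The candidate map $(\rho,\theta)\mapsto(\norm{\phi(\rho e^{i\theta})},\theta)$, composed with $\phi^{-1}$, pushes the first length element to the second up to a multiplicative factor that is bounded above and below, which is exactly what is required for a bilipschitz equivalence of the induced intrinsic metrics.

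The delicate point, and the main obstacle, is the metric comparison in the previous paragraph: the expansion of $\phi$ contributes higher-order corrections beyond the leading term $m\norm{a}\,|u|^{m-1}|du|$ in the line element, and one must check that these corrections do not destroy the bilipschitz equivalence of the induced intrinsic path distances. This reduces to the standard fact that two length elements which are pointwise bilipschitz on a small enough neighborhood induce bilipschitz intrinsic distances, together with a uniformity-in-$\theta$ check of the asymptotic estimates. A minor bookkeeping issue is to select a single $\epsilon_0>0$ that works simultaneously for every branch, so that the branch-wise bilipschitz maps agree at the origin and assemble into one global homeomorphism of $(C,0)$ with $\cone(C^{(\epsilon_0)})$.
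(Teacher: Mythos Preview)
Your approach is correct; the concerns you flag about higher-order terms and uniformity in $\theta$ are genuine but resolve with routine estimates once $\delta$ is taken small enough, since $\|\phi'(u)\|=m\|a\|\,|u|^{m-1}(1+O(|u|))$ and $\|\phi(u)\|=\|a\|\,|u|^m(1+O(|u|))$ uniformly in $\arg u$.

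The paper, however, takes a genuinely different route. Instead of normalizing each branch and computing in polar coordinates, it chooses a generic linear projection $p\colon\C^N\to\C$ (one whose kernel avoids every tangent line of $C$ at $0$) and considers the branched cover $\pi=p|_C\colon C\to D_\epsilon$. At each smooth point $x$, the local bilipschitz constant of $\pi$ is simply the modulus of its complex derivative; genericity guarantees that this extends continuously and positively across $0$ on each branch (the limit being $|p|_{T_0\gamma}|$), so compactness bounds it above and away from zero on all of $\pi^{-1}(D_\epsilon)$. It follows that the inner metric on $C$ is bilipschitz to the pullback via $\pi$ of the flat metric on the disk, and that pullback is strictly conical. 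This argument handles all branches simultaneously and replaces your explicit change of variables $r=\rho^m$ with a single compactness step; your approach, by contrast, is more hands-on but makes the cone angle explicit and does not require locating a projection direction transverse to every tangent line.
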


\begin{proof} 
  Take a linear projection $p \colon \C^N \to \C$ which is generic for
  the curve $(C,0)$ ({\it i.e.}, its kernel contains no tangent line
  of $C$ at $0$) and let $\pi:=p|_C$, which is a branched cover of
  germs. Let $D_\epsilon=\{z\in \C: |z|\le \epsilon\}$ with $\epsilon$
  small, and let $E_\epsilon$ be the part of $C$ which branched covers
  $D_\epsilon$. Since $\pi$ is holomorphic away from $0$ we have a
  local Lipschitz constant $K(x)$ at each point $x\in C \setminus
  \{0\}$ given by the absolute value of the derivative map of $\pi$ at
  $x$. On each branch $\gamma$ of $C$ this $K(x)$ extends continuously over
  $0$  by taking for $K(0)$ the absolute value  of the restriction $p \mid_{T_0\gamma} \colon T_0\gamma \to \C$ where $T_0 \gamma$ denotes the tangent cone to $\gamma$ at $0$. So the infimum and supremum $K^-$ and $K^+$ of $K(x)$ on
  $E_\epsilon\setminus\{0\}$ are defined and positive.  For any arc
  $\gamma$ in $E_\epsilon$ which is smooth except where it passes
  through $0$ we have $K^{-}\ell(\gamma)\le \ell'(\gamma)\le
  K^+\ell(\gamma)$, where $\ell$ respectively $\ell'$ represent arc length
  using inner metric on $\E_\epsilon$ respectively the metric lifted from
  $B_\epsilon$. Since $E_\epsilon$ with the latter metric is strictly
  conical, we are done.
\end{proof}

\subsection{The outer Lipschitz geometry of a complex curve}

Let  ${\mathbf G}(n-2,\C^n)$ be the Grassmanian of  $(n-2)$-planes in $\C^n$. 

Let ${\cal D} \in {\mathbf G}(n-2,\C^n)$ and let $\ell_{\cal D}
\colon \C^n \to \C^2$ be the linear projection with
kernel $\cal D$. Suppose $(C,0)\subset (\C^n,0)$ is a complex curve germ.  There exists an open
dense subset $\Omega_C$ of  ${\mathbf G}(n-2,\C^n)$  such that for ${\cal D} \in \Omega_C$, $\cal D$ contains no limit of secant lines to the curve $C$ (\cite[pp.\ 354]{Teissier1982}). 
\begin{definition} \label{def:generic projection curve}  The
  projection $\ell_{\cal D}$ is  said  to be {\bf generic for $C$} \index{generic!projection} if  ${\cal D} \in \Omega_C$. 
\end{definition}
 In the sequel, we will use extensively the following result 
\begin{theorem}[{\cite[pp.\ 352-354]{Teissier1982}}]  \label{generic projection bilipschitz}If $\ell_{\cal D}$ is a generic projection for $C$, then the restriction $\ell_{\cal D}|_{C} \colon C \to \ell_{\cal D}(C)$ is a bilipschitz homeomorphism for the outer metric.
\end{theorem}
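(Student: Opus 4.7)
The plan is to establish the two Lipschitz inequalities separately. The upper bound is immediate: since $\ell_{\cal D}$ is linear, $\|\ell_{\cal D}(x) - \ell_{\cal D}(y)\| \leq \|\ell_{\cal D}\|_{\mathrm{op}}\,\|x-y\|$ for all $x,y \in C$. The substantive content is the reverse inequality, together with injectivity of $\ell_{\cal D}|_C$ on a small enough neighbourhood of $0$.

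Both of these I would attack by the same compactness-and-contradiction scheme. For the bilipschitz bound, suppose there exist sequences of pairs $(x_n, y_n)$ of distinct points of $C$, with $x_n, y_n \to 0$, such that
$$\frac{\|\ell_{\cal D}(x_n) - \ell_{\cal D}(y_n)\|}{\|x_n - y_n\|} \longrightarrow 0.$$
Setting $u_n = (x_n-y_n)/\|x_n-y_n\| \in S^{2n-1}$ and extracting a convergent subsequence $u_n \to u$, linearity of $\ell_{\cal D}$ forces $\ell_{\cal D}(u) = 0$, i.e.\ $u \in {\cal D}$. But by construction $u$ is a limit of secant lines to $C$ at $0$, contradicting ${\cal D} \in \Omega_C$. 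Injectivity near $0$ is even easier: if $x_n \neq y_n$ both tend to $0$ with $\ell_{\cal D}(x_n) = \ell_{\cal D}(y_n)$, then $x_n - y_n \in {\cal D}$ outright, and the same subsequence argument produces $u \in {\cal D}$ as a limit of secant lines --- the same contradiction.

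It remains to exclude degeneration for pairs that do not both collapse to $0$. At every smooth point $p$ of $C$ near $0$, the derivative $d(\ell_{\cal D}|_C)_p$ is injective, because $T_p C$ is not contained in ${\cal D}$: tangent lines at smooth points of $C$ accumulate on the tangent cone of $C$ at $0$, which lies inside the set of limits of secant lines at $0$ and is therefore disjoint from ${\cal D}$. Hence $\ell_{\cal D}|_C$ is a local biholomorphism at each smooth point of $C \setminus \{0\}$ near $0$, with local bilipschitz constants depending continuously on the point. Compactness of the annular sets $C \cap (\overline{B_\epsilon} \setminus B_\delta)$ then gives uniform control away from $0$, and combining this with the previous step yields a global constant on some $C \cap B_\epsilon$.

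The main obstacle is matching the two regimes without losing uniformity: one must ensure that the annular bilipschitz constants do not blow up as $\delta \to 0$. This is precisely what the hypothesis ${\cal D} \in \Omega_C$ buys, since the ``dangerous'' directions that could degenerate the local derivative --- namely tangent directions at smooth points of $C$ approaching $0$ --- accumulate on the very set of limits of secant directions that $\cal D$ is assumed to avoid. Once this continuity-plus-genericity observation is in place, the contradiction argument is essentially a one-line application of compactness.
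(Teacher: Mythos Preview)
The paper does not give its own proof of this theorem; it is quoted from Teissier \cite[pp.\ 352--354]{Teissier1982} and used as a black box. So there is no in-paper argument to compare against, and your proposal stands on its own.

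Your approach is the standard one and is essentially correct, but the decomposition into ``both points collapse to $0$'' versus ``annular pairs'' is unnecessary and introduces a small loose end. In the annular part you only establish that $\ell_{\cal D}|_C$ is a \emph{local} bilipschitz diffeomorphism with continuously varying constants; this controls pairs $(x_n,y_n)$ with $x_n,y_n\to p\ne 0$ (same limit), but not pairs converging to two \emph{distinct} points $p\ne q$ in the annulus. For those you would still need that the secant direction $\overline{pq}$ avoids ${\cal D}$, which is again the secant-cone hypothesis.

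All of this is bypassed by setting up the contradiction at the level of the germ: if $\ell_{\cal D}|_{C\cap B_\epsilon}$ fails to be bilipschitz for \emph{every} $\epsilon>0$, pick for each $n$ a bad pair $x_n\ne y_n$ in $C\cap B_{1/n}$ with
\[
\frac{\|\ell_{\cal D}(x_n)-\ell_{\cal D}(y_n)\|}{\|x_n-y_n\|}<\frac{1}{n}.
\]
Then $x_n,y_n\to 0$ automatically, and your first paragraph finishes the job: any subsequential limit $u$ of $(x_n-y_n)/\|x_n-y_n\|$ lies in ${\cal D}\cap\{\text{limits of secant lines of }C\text{ at }0\}$, contradicting ${\cal D}\in\Omega_C$. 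Injectivity need not be argued separately, as it is immediate from the lower Lipschitz bound. With this streamlining your argument is complete and matches Teissier's.
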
 

As a consequence of Theorem \ref{generic projection bilipschitz}, in order to understand Lipschitz geometry of curve germs, it suffices to understand Lipschitz geometry of plane curve germs. 

 Let us start with an example.
\begin{example} \label{ex:carrousel tree} Consider the plane curve germ  $(C,0)$ with two branches having Puiseux expansions $$y=x^{3/2}+x^{13/6},\quad y=x^{5/2}\,.$$
Its topological type is completely described by the sets of characteristic  exponents \index{Puiseux!characteristic exponent} of the branches:  $\{ 3/2, 13/6\}$ and $\{5/2\}$ and by the contact exponents between the two branches: $3/2$. Those  data are  summarized in the Eggers-Wall tree \index{Eggers-Wall tree}  of the curve  germ (see \cite{Wall2004, GPPP19}), or equivalently, in what we will call the {\bf carrousel tree}  \index{carrousel tree} (see the proof of Lemma \ref{le:curve geometry}  and Figure \ref{fig:2}), which is exactly the Kuo-Lu tree defined in \cite{KuoLu1977} but with the horizontal bars contracted to points. 

\begin{figure}[ht]
  \centering
 
\begin{tikzpicture}

  \draw[thin ](0,-2)--(0.2,-4);
   \draw[thin ](0,-2)--(-0.2,-4);
  \draw[thin ](0,0)--(0,-4);
   \draw[thin ](0,-1)--(-1,-3);
       \draw[thin ](-1.1,-4)--(-1,-3);
         \draw[thin ](-0.9,-4)--(-1,-3);
         
         \draw[thin ](0,-1)--(1,-2);
            \draw[thin ](1,-4)--(1,-2);
       \draw[thin ](1.2,-4)--(1,-2);
         \draw[thin ](0.8,-4)--(1,-2);

\draw[fill=white] (0,0)circle(2pt);
\draw[fill=white] (0,-1)circle(2pt);
\draw[fill=white] (0,-2)circle(2pt);
\draw[fill=white] (0,-4)circle(2pt);
\draw[fill=white] (-0.2,-4)circle(2pt);
\draw[fill=white] (0.2,-4)circle(2pt);

\draw[fill=white] (-1,-3)circle(2pt);
\draw[fill=white] (-1.1,-4)circle(2pt);
\draw[fill=white] (-0.9,-4)circle(2pt);

\draw[fill=white] (1,-2)circle(2pt);
\draw[fill=white] (1,-4)circle(2pt);
\draw[fill=white] (0.8,-4)circle(2pt);
\draw[fill=white] (1.2,-4)circle(2pt);

\node(a)at(0.2,0){   $1$};
\node(a)at(0.2,-.8){ \small{$\frac{3}{2}$}};
\node(a)at(0.2,-1.8){ \small{$\frac{13}{6}$}};
\node(a)at(1.2,-1.8){ \small{$\frac{13}{6}$}};
\node(a)at(-1.2,-2.8){ \small{$\frac{5}{2}$}};

\end{tikzpicture}\caption{The carrousel   tree}\label{fig:2}
\end{figure}
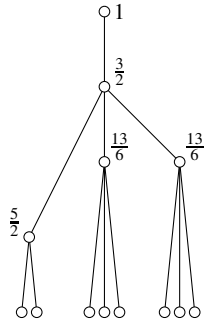 

Now, for small $t \in \R^+$,  consider the intersection $C \cap \{x=t\}$. This gives $8$ points $p_i(t), i=1 \ldots,8$ and then, varying $t$,  this gives $8$ real semi-analytic arcs $p_i \colon [0,1) \to X$  such that $p_i(0)=0$ and $\lVert p_i(t) \rVert = \Theta(t)$. 

Figure \ref{fig:3} gives pictures of sections of $C$ with complex lines
 $x=0.1$, $0.05$, $0.025$ and $0$. { The central two-points set corresponds to the  branch $y=x^{5/2}$ while the two lateral  three-points sets correspond to the other branch. }

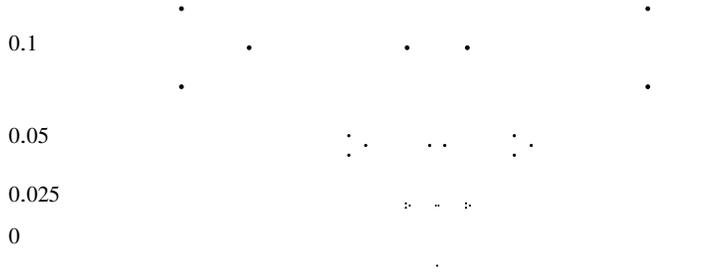
\begin{figure}[ht]
  \centering
\vbox to 0 pt{\vglue12pt\hbox to 0 pt{$0.1$\hss}
\vglue25pt\hbox to 0 pt{$0.05$\hss}
\vglue12pt\hbox to 0 pt{$0.025$\hss}
\vglue6pt\hbox to 0 pt{$0$\hss}\vss}

\begin{tikzpicture}

\draw[fill=black] (3.1,0)+(0:.6)circle(.7pt);
\draw[fill=black] (3.1,0)+(120:.6)circle(.7pt);
\draw[fill=black] (3.1,0)+(240:.6)circle(.7pt);
\draw[fill=black] (-3.1,0)+(0:.6)circle(.7pt);
\draw[fill=black] (-3.1,0)+(120:.6)circle(.7pt);
\draw[fill=black] (-3.1,0)+(240:.6)circle(.7pt);
\draw[fill=black] (0,0)+(0:0.4)circle(.7pt);
\draw[fill=black] (0,0)+(180:0.4)circle(.7pt);

\begin{scope}[yshift=-0.3cm] 
\draw[fill=black] (1.1,-1)+(0:.15)circle(.4pt);
\draw[fill=black] (1.1,-1)+(120:.15)circle(.4pt);
\draw[fill=black] (1.1,-1)+(240:.15)circle(.4pt);
\draw[fill=black] (-1.1,-1)+(0:.15)circle(.4pt);
\draw[fill=black] (-1.1,-1)+(120:.15)circle(.4pt);
\draw[fill=black] (-1.1,-1)+(240:.15)circle(.4pt);
\draw[fill=black] (0,-1)+(0:0.1)circle(.4pt);
\draw[fill=black] (0,-1)+(180:0.1)circle(.4pt);
\end{scope}

\begin{scope}[yshift=-0.6cm] 
\draw[fill=black] (.4,-1.5)+(0:.034)circle(.2pt);
\draw[fill=black] (.4,-1.5)+(120:.034)circle(.2pt);
\draw[fill=black] (.4,-1.5)+(240:.034)circle(.2pt);
\draw[fill=black] (-.4,-1.5)+(0:.034)circle(.2pt);
\draw[fill=black] (-.4,-1.5)+(120:.034)circle(.2pt);
\draw[fill=black] (-.4,-1.5)+(240:.034)circle(.2pt);
\draw[fill=black] (0,-1.5)+(0:0.018)circle(.15pt);
\draw[fill=black] (0,-1.5)+(180:0.018)circle(.15pt);
\end{scope}
\begin{scope}[yshift=-0.9cm] 
\draw[fill=black] (0,-2)+(0:0)circle(.2pt);
\end{scope}

\end{tikzpicture}
\caption{Sections of $C$}\label{fig:3}
\end{figure}

It is easy to see on this example  that  for each  pair $(i, j)$ with $i \neq j$, we have $d_o(p_i(t),p_j(t)) = \Theta(t^{q(i,j)})$ where $q(i,j)\in \Q^+$ and that the set of such $q(i,j)$'s is exactly the set of  essential exponents $\{3/2, 13/6, 5/2\}$. This shows that one can recover the essential exponents by measuring the outer distance between points of $C$.  \end{example}

More generally, we will show that we can actually recover the carrousel tree  by measuring outer distances on $X$ even after a bilipschitz change of the metric. Conversely, the outer Lipschitz geometry of a plane curve is determined by its embedded topological type. This gives the complete classification of the outer geometry of complex plane curve germs:
\begin{theorem} \label{th:main} Let $(E_1,0)\subset (\C^2,0)$ and
  $(E_2,0)\subset (\C^2,0)$ be two germs of complex curves. The
  following are equivalent:
   \begin{enumerate}
  \item\label{it1} $(E_1,0)$ and $(E_2,0)$ have same outer  Lipschitz
    geometry. 
  \item\label{it2} there is a meromorphic germ 
    $\phi\colon (E_1,0)\to (E_2,0)$ which is a bilipschitz  homeomorphism for the outer
    metric;
  \item\label{it3} $(E_1,0)$ and $(E_2,0)$ have the same embedded
    topological type; 
  \item\label{it4} there is a bilipschitz homeomorphism
    of germs $h\colon (\C^2,0) \to (\C^2,0)$ with $h(E_1)=E_2$.
  \end{enumerate}
\end{theorem}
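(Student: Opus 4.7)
The plan is to establish the cycle (3) $\Rightarrow$ (4) $\Rightarrow$ (1) $\Rightarrow$ (3) together with (3) $\Rightarrow$ (2) $\Rightarrow$ (1); two of the five arrows are immediate. Indeed (4) $\Rightarrow$ (1) is obtained by restricting the ambient bilipschitz homeomorphism to $E_1$, and (2) $\Rightarrow$ (1) by forgetting the meromorphic structure. So the real content lies in the three implications (3) $\Rightarrow$ (4), (3) $\Rightarrow$ (2) and (1) $\Rightarrow$ (3).

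For (3) $\Rightarrow$ (4), I would use the fact that the embedded topological type of a plane curve germ is completely encoded by the Puiseux pairs of each branch together with the pairwise contact exponents, i.e.\ by the carrousel tree introduced in Example \ref{ex:carrousel tree}. Given two curves with the same tree, one cuts a small polydisk around the origin into standard \emph{carrousel pieces} adapted to the contact structure, and defines an ambient homeomorphism of $(\C^2,0)$ piecewise: on each piece, a real-analytic interpolation between matched Puiseux parametrizations of corresponding branches is bilipschitz and extends continuously across the boundary annuli between neighbouring pieces. Gluing yields a germ $h\colon(\C^2,0)\to(\C^2,0)$ with $h(E_1)=E_2$ that is bilipschitz for the Euclidean metric. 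Implication (3) $\Rightarrow$ (2) is handled in parallel: matching branches via their Puiseux expansions gives a meromorphic germ $\phi\colon(E_1,0)\to(E_2,0)$, and Theorem \ref{generic projection bilipschitz} reduces checking the bilipschitz property to a computation in the projected plane, where the Puiseux matching is manifestly bilipschitz branch by branch.

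The main content is (1) $\Rightarrow$ (3). The strategy is to show that the carrousel tree of a plane curve germ, and hence its embedded topological type, can be recovered from the outer metric alone. Fix a generic linear projection $\ell\colon\C^2\to\C$ as in Definition \ref{def:generic projection curve}, and for small $t\in\R^+$ let $p_1(t),\dots,p_m(t)$ be the points of $E\cap\ell^{-1}(t)$, extended to real semi-analytic arcs. By Theorem \ref{generic projection bilipschitz}, the outer distance on $E$ is comparable with the pullback via $\ell$, so one has $d_o(p_i(t),p_j(t))=\Theta(t^{q(i,j)})$ with rational $q(i,j)\ge 1$; the set of values $q(i,j)$ is exactly the set of characteristic and contact exponents of the tree, and the tree itself is determined by the combinatorics of which pairs $(i,j)$ realize which exponent. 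The task is to prove that this numerical/combinatorial data is invariant under any outer bilipschitz homeomorphism $\psi\colon(E_1,0)\to(E_2,0)$, even one that does not respect a preferred projection.

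Here the \emph{bubble trick} is the key tool. Around each arc $p_i$, and for each candidate exponent $q$, one considers a horn neighbourhood of the form $\{x\in E\colon d_o(x,p_i(t))\le c\,t^q\text{ for }t=\lVert p_i(t)\rVert\}$; being defined purely in terms of the outer metric, such horns are carried to horns of the same shrink rate by $\psi$, up to bilipschitz constants. By tracking which arcs lie in which horn for each rate $q$, one reads off the full tree structure and deduces it is an invariant of the outer Lipschitz geometry. The main obstacle, and the hardest technical point, is controlling this reconstruction when different bilipschitz homeomorphisms permute arcs whose pairwise exponents are close but distinct; this forces one to upgrade to the \emph{bubble trick with jumps}, exploiting the fact that the exponents form a discrete set and that at each rate $\psi$ is constrained to preserve the tangential combinatorics. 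Once invariance is established, (1) $\Rightarrow$ (3) follows since the carrousel tree determines the embedded topological type.
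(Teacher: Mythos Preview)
Your overall architecture for the equivalences is fine, and your sketch of (3) $\Rightarrow$ (4) and (3) $\Rightarrow$ (2) is reasonable (though the paper simply cites \cite{NeumannPichon2014} and \cite{PhamTeissier1969} for these and does not reprove them). The substantive part is (1) $\Rightarrow$ (3), and here your argument has a real gap and a misidentified obstacle.

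First, a minor but genuine error: your appeal to Theorem~\ref{generic projection bilipschitz} to justify $d_o(p_i(t),p_j(t))=\Theta(t^{q(i,j)})$ is misplaced. That theorem concerns generic projections $\C^n\to\C^2$ restricted to a curve; here your $\ell$ is a projection $\C^2\to\C$, which is a branched cover and certainly not bilipschitz on $E$ (indeed $\ell(p_i(t))=\ell(p_j(t))$). The asymptotic $\Theta(t^{q(i,j)})$ is a direct consequence of comparing Puiseux expansions, not of Theorem~\ref{generic projection bilipschitz}.

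More seriously, you have not closed the main gap in (1) $\Rightarrow$ (3). Your arcs $p_i(t)$ are constructed from a generic linear projection, which is analytic data. Under a bilipschitz homeomorphism $\psi\colon (E_1,0)\to(E_2,0)$, the image $\psi\circ p_i$ is just some continuous arc on $E_2$ with $d(0,\psi(p_i(t)))=\Theta(t)$; there is no reason it should arise from a generic projection on $E_2$. So you must show that the exponents $q(j,k)$ can be read off from horns centred on an \emph{arbitrary} continuous arc satisfying only the growth condition $d(0,p'(t))=\Theta(t)$. The paper makes this step explicit.

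Second, the ``main obstacle'' is not what you describe. Permutation of arcs is harmless: the function $q$ is only determined up to relabelling anyway, and the carrousel tree depends only on the values $q(j,k)$ up to a permutation of $[M]$. The actual difficulty is that after a bilipschitz change of metric, a metric ball $B'(p'(t),rt)$ on $E$ can disintegrate into many more than $M$ connected components (the paper's Figure~\ref{fig:4}), so counting components and measuring distances between them no longer directly yields the $q(j,k)$. The paper resolves this by a nested-ball argument: with $K$ the bilipschitz constant, one compares the $d'$-balls of radii $rt/K^3$ and $rt/K$ with the $d_o$-balls of radii $rt/K^4$, $rt/K^2$, $rt$, and shows that exactly $M$ components of the smaller $d'$-ball meet the larger one, with pairwise distances still of order $t^{q(j,k)}$. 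This is the content of the bubble trick, and your proposal does not supply it.

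Finally, you do not need the bubble trick with jumps for this theorem. The paper presents it in Section~\ref{sub:with jumps} as an \emph{alternative} route to (1) $\Rightarrow$ (3), useful as a warm-up for the surface case, but the basic bubble trick (balls of radius $rt$, i.e.\ rate $q=1$, together with the nested-ball comparison) already suffices.
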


As a corollary of Theorem \ref{generic projection bilipschitz} and Theorem \ref{th:main}, we obtain: 

\begin{corollary} The outer Lipschitz geometry of a curve germ $(C,0) \subset(\C^N,0)$ determines and is determined by the embedded topological type of any generic linear projection $(\ell(C),0) \subset (\C^2,0)$. 
\end{corollary}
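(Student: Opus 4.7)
The plan is to reduce the $\C^N$ case to the plane curve case via a generic projection, and then invoke Theorem \ref{th:main} on the resulting plane curve. Concretely, the assertion has two halves to check: that the outer Lipschitz geometry of $(C,0)$ determines the embedded topological type of $(\ell(C),0)\subset(\C^2,0)$, and conversely that this embedded topological type determines the outer Lipschitz geometry of $(C,0)$. Both halves will be proved by chaining a bilipschitz equivalence with an embedded topological equivalence.

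First I would fix a generic linear projection $\ell\colon \C^N\to\C^2$ for the curve $(C,0)$, in the sense of Definition \ref{def:generic projection curve}. By Theorem \ref{generic projection bilipschitz}, the restriction $\ell|_C\colon (C,0)\to(\ell(C),0)$ is a bilipschitz homeomorphism for the outer metric. In particular, $(C,0)\subset(\C^N,0)$ and $(\ell(C),0)\subset(\C^2,0)$ have the same outer Lipschitz geometry. This single observation will be the engine for both directions.

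For the forward implication, suppose two curve germs $(C_1,0)\subset(\C^{N_1},0)$ and $(C_2,0)\subset(\C^{N_2},0)$ share the same outer Lipschitz geometry, and pick generic projections $\ell_j$ for each. Then the plane curves $(\ell_1(C_1),0)$ and $(\ell_2(C_2),0)$ have the same outer Lipschitz geometry (they are outer bilipschitz to $C_1$ and $C_2$ respectively, and $C_1,C_2$ are outer bilipschitz to each other). By the implication \eqref{it1}$\Rightarrow$\eqref{it3} of Theorem \ref{th:main}, $(\ell_1(C_1),0)$ and $(\ell_2(C_2),0)$ share the same embedded topological type. Taking $C_1=C_2=C$ with two different generic projections also shows, as a free byproduct, that the embedded topological type of $(\ell(C),0)$ does not depend on the choice of generic projection.

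For the converse, assume $(\ell_1(C_1),0)$ and $(\ell_2(C_2),0)$ have the same embedded topological type for some generic projections $\ell_1,\ell_2$. By the implication \eqref{it3}$\Rightarrow$\eqref{it1} of Theorem \ref{th:main}, they have the same outer Lipschitz geometry, and then Theorem \ref{generic projection bilipschitz} transfers this back to $(C_1,0)$ and $(C_2,0)$, which therefore share the same outer Lipschitz geometry. There is no real obstacle in this argument: everything is bookkeeping on top of the two quoted theorems, and the only point that might look subtle—the well-definedness of the embedded topological type of a generic projection—is absorbed automatically by running the forward direction with $C_1=C_2$.
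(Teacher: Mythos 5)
Your argument is exactly the one the paper intends: chain the outer bilipschitz equivalence $\ell|_C\colon (C,0)\to(\ell(C),0)$ from Theorem \ref{generic projection bilipschitz} with the equivalence \eqref{it1}$\Leftrightarrow$\eqref{it3} of Theorem \ref{th:main}, in both directions. The remark that well-definedness of the embedded topological type of a generic projection falls out by taking $C_1=C_2$ is a correct and worthwhile observation; nothing is missing.
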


The equivalence of \eqref{it1}, \eqref{it3} and \eqref{it4}  of Theorem \ref{th:main} is
proved in \cite{NeumannPichon2014}. The equivalence of \eqref{it2} and \eqref{it3}
was first proved by  Pham and Teissier \cite{PhamTeissier1969} by developing the theory of Lipschitz saturation and revisited by Fernandes in \cite{Fernandes2003}.
 In the present lecture notes, we will give the proof of  \eqref{it1} $\Rightarrow$ \eqref{it3}, since it is based on the so-called {\it bubble trick} argument which can be considered as a prototype for exploring Lipschitz geometry of singular spaces in various settings. Another  more sophisticated  bubble trick argument   
 is developed in \cite{NeumannPichon2012} to study Lipschitz geometry of 
 surface germs (namely in the proof of Theorem \ref{thm:outer invariant}).

 \begin{proof}[  of \eqref{it1} $\Rightarrow$ \eqref{it3} of Theorem \ref{th:main}]
We want to prove  that the embedded topological type of a
plane curve germ $(C,0) \subset (\C^2,0)$ is determined by the outer
Lipschitz geometry of $(C,0)$.
  
We first prove this using the analytic structure and the outer metric
on $(C,0)$. The proof is close to Fernandes' approach in
\cite{Fernandes2003}.  We then modify the proof to
make it purely topological and to allow a bilipschitz change of the
metric.

The tangent cone to $C$ at $0$ is a union of lines $L^{(j)}$,
$j=1,\dots,m$, and by choosing our coordinates we can assume they are
all transverse to the $y$-axis.

There is $\epsilon_0 >0$ such that for every   $\epsilon \in (0,\epsilon_0]$,   the
curve $C$  meets transversely the set
$$T_\epsilon:=\{(x,y)\in \C^2: |x|= \epsilon\}\,.$$

Let $M$ be the multiplicity of $C$. The hypothesis of transversality to the $y$-axis means that the  lines $x=t$ for $t\in
(0,\epsilon_0]$ intersect $C$ in $M$ points
$p_1(t),\dots,p_{M}(t)$. Those points depend continuously on $t$.  Denote by $[M]$ the set
$\{1,2,\dots,M\}$. For each
$  j, k \in [M]$ with $j < k$, the distance $d(p_j(t),p_k(t))$ has the form
$O(t^{q(j,k)})$, where $q(j,k) = q(k,j) \in \mathbb{Q} \cap [1, + \infty)$ is either a characteristic Puiseux
exponent for a branch of the plane curve $C$ or a coincidence exponent
between two branches of $C$ in the sense of e.g., \cite{LeMichelWeber1989}. We call
such exponents {\bf essential}. \index{Puiseux!essential exponent}

 For $j\in [M]$, define $q(j,j)=\infty$.

\begin{lemma} \label{le:curve geometry}The map $q\colon
  [M]\times[M]\to \Q\cup\{\infty\}$, $(j,k)\mapsto q(j,k)$,
  determines the embedded topology of $C$.
\end{lemma}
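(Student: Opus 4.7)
The plan is to reconstruct the carrousel tree of $(C,0)$ from the matrix of exponents $q$, and then invoke the classical fact that this tree determines the embedded topological type of a reduced plane curve germ. The first ingredient I would establish is that $q$ satisfies an ultrametric-type inequality: for any three indices $j,k,\ell \in [M]$, the triangle inequality in $\C^2$ applied to the points $p_j(t), p_\ell(t), p_k(t)$, combined with the asymptotic $d(p_i(t),p_j(t)) = \Theta(t^{q(i,j)})$, forces
\[
q(j,k) \;\geq\; \min\bigl(q(j,\ell),\, q(\ell,k)\bigr).
\]
Consequently, for every $r \in \Q \cup \{\infty\}$, the relation $j \sim_r k \iff q(j,k) \geq r$ is an equivalence relation on $[M]$.

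Next, I would extract a rooted labeled tree $T_q$ from this family of equivalence relations. Let $1 = r_0 < r_1 < \cdots < r_s$ be the distinct finite values taken by $q$, together with $1$. As $r$ grows through this list, the partition $[M]/{\sim_r}$ refines; encode this refinement as a rooted tree whose root corresponds to the trivial partition, whose internal nodes are the equivalence classes that properly split at some $r_i$ (labeled by that $r_i$), and whose leaves are the elements of $[M]$. By construction, $T_q$ together with its labels is determined by $q$ alone.

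The heart of the argument is to identify $T_q$ with the carrousel tree of $(C,0)$. The key asymptotic calculation is that if two strands $p_j, p_k$ lie on a common branch with Puiseux expansions $y = \sum_\alpha a_\alpha x^\alpha$ and $y = \sum_\alpha a'_\alpha x^\alpha$ agreeing up to some exponent and first differing at $\alpha$, then $d(p_j(t), p_k(t)) = \Theta(t^\alpha)$; while if they lie on distinct branches the exponent is the coincidence exponent of those branches. This exhibits the values of $q$ as precisely the characteristic Puiseux exponents of the branches together with the pairwise coincidence exponents, and matches the branching structure of $T_q$ with that of the carrousel tree of Figure \ref{fig:2}. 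I would then invoke the classical theorem (Zariski; see e.g.\ \cite{Wall2004,GPPP19}) stating that the characteristic Puiseux pairs of all branches together with all pairwise coincidence exponents determine the embedded topological type of a plane curve germ.

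The main technical obstacle is the identification step: verifying that the abstract tree extracted from $q$ has the same internal vertices, labels, and leaf-grouping as the carrousel tree. The ultrametric inequality is what ensures no spurious branching nodes are created, while the asymptotic analysis of distances between Puiseux strands ensures that every genuine branching of the carrousel tree is detected by a drop in $q$. I expect this to be the most delicate part of the proof, but it reduces to a branch-by-branch bookkeeping exercise on Puiseux parametrizations.
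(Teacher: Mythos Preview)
Your proposal is correct and follows essentially the same route as the paper: both establish the ultrametric inequality $q(j,k)\ge\min(q(j,\ell),q(\ell,k))$, build a rooted tree from the resulting chain of equivalence relations $\sim_r$, identify it with the carrousel tree, and conclude via the classical equivalence between carrousel/Eggers data and embedded topology. The only cosmetic difference is that the paper explicitly extracts the Eggers tree from the carrousel tree via a Galois quotient (the $r_v,s_v$ bookkeeping), whereas you invoke the classical theorem directly; the content is the same.
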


\begin{proof}   To prove the lemma we will construct from $q$ the so-called {\it carrousel tree}\index{carrousel tree}.   Then, we will show that it encodes the same data as the Eggers tree. This implies that it determines the  embedded topology of $C$.

 The $q(j,k)$ have the property that $q(j,l) \ge min (q(j,k),q(k,l))$
  for any triple $j,k,l$. So for any $q\in \Q\cup\{\infty\}, q>0$, the binary 
  relation on the set $[M]$ defined by $j\sim_q k\Leftrightarrow
  q(j,k)\ge q$ is an equivalence relation.

  Name the elements of the set $q([M]\times[M])\cup\{1\}$ in
  decreasing order of size: $\infty=q_0>q_1>q_2>\dots>q_s=1$.
  For each $i=0,\dots,s$ let $G_{i,1},\dots,G_{i,M_i}$ be the
  equivalence classes for the relation $\sim_{q_i}$. So $M_0=M$
  and the sets $G_{0,j}$ are singletons while $M_s=1$ and
  $G_{s,1}=[M]$. We form a tree with these equivalence classes $G_{i,j}$ as
  vertices, and edges given by inclusion relations:  the
  singleton sets $G_{0,j}$ are the leaves and there is an edge between
  $G_{i,j}$ and $G_{i+1,k}$ if $G_{i,j}\subseteq G_{i+1,k}$. The vertex
  $G_{s,1}$ is the root of this tree. We weight each vertex   with its corresponding $q_i$.

  The {\bf carrousel tree} is the tree obtained from this
  tree by suppressing valence $2$ vertices (i.e., vertices with exactly two incident edges): we remove each such vertex
  and amalgamate its two adjacent edges into one edge. We follow the computer science
convention of drawing the tree with its root vertex at the top,
descending to its leaves at the bottom (see Figure \ref{fig:2}).

At any non-leaf vertex $v$ of the carrousel tree
  we have a weight $q_v$, $1\le q_v\le q_1$, which is one of the
  $q_i$'s. We write it as $m_v/n_v$, where $n_v$ is the ${\operatorname{lcm}}$ of the
  denominators of the $q$-weights at the vertices on the path from $v$
  up to the root vertex. If $v'$ is the adjacent vertex above $v$
  along this path, we put $r_v=n_v/n_{v'}$ and $s_v=n_v(q_v-q_{v'})$.
  At each vertex $v$ the subtrees cut off below $v$ consist of groups
  of $r_v$ isomorphic trees, with possibly one additional tree.  We
  label the top of the edge connecting to this additional tree at $v$,
  if it exists, with the number $r_v$, and then delete all but one from
  each group of $r_v$ isomorphic trees below $v$. We do this for each
  non-leaf vertex of the carrousel tree. The resulting tree,
  with the $q_v$ labels at vertices and the extra label on a downward
  edge at some vertices is easily recognized as a mild modification of
  the Eggers tree: there is a natural action of the  Galois group whose quotient is  the Eggers tree.
\end{proof}

As already noted, this reconstruction of the embedded topology involved the
complex structure and  the outer metric. We must show  that we can reconstruct 
it without using  the complex structure, even after applying a
bilipschitz change to the outer metric. We will use what we call a {\bf bubble trick}.

Recall that the tangent cone of $C$ is a union of lines $L^{(j)}$. We
denote by $C^{(j)}$ the part of $C$ tangent to the line $L^{(j)}$.  It
suffices to recover the topology of each $C^{(j)}$ independently,
since the $C^{(j)}$'s are distinguished by the fact that the distance
between any two of them outside a ball of radius $\epsilon$ around $0$
is $\Theta(\epsilon)$, even after bilipschitz change of
the metric. We therefore assume from now on that the tangent cone of 
$C$ is a single complex line.

We now arrive at a crucial moment of the proof and of the paper.

\noindent{\bf The bubble trick.} \index{bubble trick} The points $p_1(t),\dots,p_M(t)$ which we used in order to find the numbers
$q(j,k)$ were obtained by intersecting $C$ with the line $x=t$. The arc
$p_1(t)$, $t\in [0,\epsilon_0]$ satisfies $d(0,p_1(t))=\Theta(t)$. Moreover,
the other points $p_2(t),\dots,p_M(t)$ are in the transverse disk of 
radius $rt$ centered at $p_1(t)$ in the plane $x=t$. Here $r$ can be as
small as we like, so long as $\epsilon_0$ is then chosen sufficiently
small.

Instead of a transverse disk of radius $rt$, we can use a ball
$B(p_1(t),rt)$ of radius $rt$ centered at $p_1(t)$. This ball 
$B(p_1(t),rt)$ intersects $C$ in $M$ disks $D_1(t),\dots,D_M(t)$,
and we have $d(D_j(t),D_k(t))=\Theta(t^{q(j,k)})$, so we still recover the
numbers $q(j,k)$.  In fact, the ball in the outer metric on $C$ of
radius $rt$ around $p_1(t)$ is $B_C(p_1(t),rt):=C\cap B(p_1(t),rt)$,
which consists of these $M$ disks $D_1(t),\dots,D_M(t)$.

We now replace the arc $p_1(t)$ by any continuous arc $p'_1(t)$ on
$C$ with the property that $d(0,p'_1(t))=\Theta(t)$. If $r$ is
sufficiently small it is still true that $B_C(p'_1(t),rt)$ consists of
$M$ disks $D'_1(t),\dots,D'_M(t)$ with
$d\bigl(D'_j(t),D'_k(t)\bigr)=\Theta(t^{q(j,k)})$.   So at this point, we have gotten
rid of the dependence on analytic structure in discovering the
topology, but not yet  of the dependence on the outer geometry.

Let now $d'$ be a metric on $C$ such that the identity map is a $K$-bilipschitz homeomorphism in a neighbourhood of the origin. We work inside this neighbourhood, taking $t, \epsilon_0$ and $r$  sufficiently small.  $B'(p,\eta)$ will denote the ball in $C$ for the metric $d'$ centered at $p \in C$ with radius $\eta\geq 0$. 

The bilipschitz change of the metric may disintegrate the balls in many connected components, as sketched on Figure \ref{fig:4}, where  the  round ball  $B_C(p'_1(t), rt)$  has $3$ components ($3$ is the mulitplicity of $C$), while  $B'(p'_1(t), rt)$ has $6$  components  (for clarity of the picture,  we draw the ball  $B'(p'_1(t), rt)$    as if   the distance $d'$ were induced by an ambient metric, but  this is not the case in general).

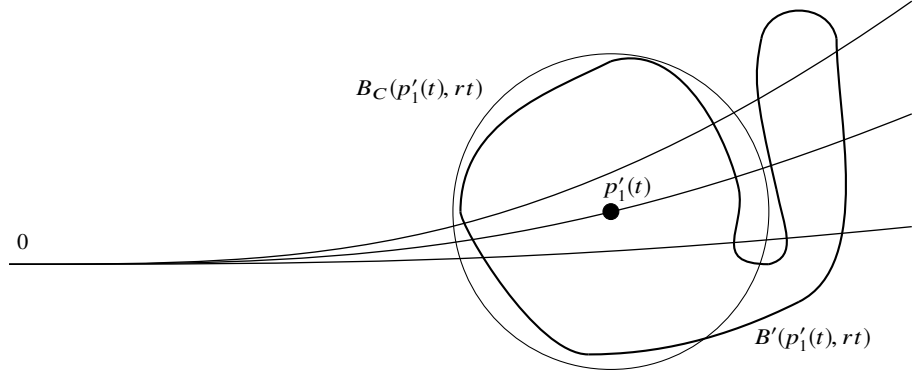
\begin{figure}[ht]
\centering
\begin{tikzpicture} 

\draw[line width=.5pt ]  (-6,0) .. controls (-2,0) and (1,0)  .. (6,3.5);
\draw[line width=.5pt ]  (-6,0) .. controls (-2,0) and (1,0)  .. (6,2);
\draw[line width=.5pt ]  (-6,0) .. controls (-2,0) and (1,0)  .. (6,.5);

 \draw[thick]  (2,2.7).. controls (1.2,2.3) and 
(0,1.9) ..(0,0.7);
 \draw[thick]   (0,0.7).. controls (0,.5) and 
(1,-1.2) ..(1.7,-1.2);
 \draw[thick]   (1.7,-1.2).. controls (2.6,-1.2) and 
(3.5,-1) ..(4.5,-0.5);
 \draw[thick]   (4.5,-0.5).. controls (5.5,0) and 
(5,2) ..(5,3);
\draw[thick]   (5,3).. controls (4.9,3.5) and 
(4.1,3.5) ..(4,3);
\draw[thick]   (4,3).. controls (3.9,2.5) and 
(3.9,2.3) ..(4.2,1);
\draw[thick]   (4.2,1).. controls (4.3,.5) and 
(4.5,.1) ..(4.1,0);
\draw[thick]   (4.1,0).. controls (3.3,0) and 
(3.8,.5) ..(3.7,1);
\draw[thick]   (3.7,1).. controls (3.6,1.5) and 
(3,3) .. (2,2.7);

 \node at(-6,0.3)[right]{$0$};

\draw[] (2,.7)circle(2.1cm);
\draw[fill=black] (2,.7)circle(3pt);

 \node at(1.8,1)[right]{$\small p'_1(t)$};
 
  \node at(3.8,-1)[right]{$\small B'(p'_1(t), rt)$};
 
  \node at(-1.5,2.3)[right]{$\small B_C(p'_1(t),rt)$};
  



 \end{tikzpicture} 
 \caption{Change of the metric}  
  \label{fig:4}
\end{figure}

  If we try to perform the same argument as before using the balls $B'(p'_1(t),rt)$ instead of $B_C(p'_1(t),rt)$, we get a problem since $B'(p'_1(t),rt)$ may have many irrelevant components and  we can no longer simply use distance between connected components. To resolve this, we consider the two balls  
$B'_1(t)=B'(p'_1(t), \frac{rt}{K^3})$ and $B'_2(t)=B'(p'_1(t),\frac{rt}{K})$, we have the inclusions:
$$B_C\big(p'_1(t), \frac{rt}{K^4}\big) \ \ \subset \ \  B'_1(t) \ \  \subset  \ \  B_C\big(p'_1(t), \frac{rt}{K^2}\big)  \ \ \subset  \ \ B'_2(t) \ \ \subset\ \   B_C\big(p'_1(t), rt\big)$$

 Using these inclusions, we obtain that only $M$ components of
$B'_1(t)$  intersect $B'_2(t)$ and that naming
these components $D'_1(t),\dots,D'_M(t)$ again, we still have
$d(D'_j(t),D'_k(t))=\Theta(t^{q(j,k)})$ so the $q(j,k)$ are determined as
before (prove this as an exercise). See Figure \ref{fig:4.1} for a schematic picture of the situation (again, for clarity of the picture,  we draw the balls $B'_1(t)$ and $B'_2(t)$ as if   the distance $d'$ were induced by an ambient metric, but  this is not the case in general). 
\begin{figure}[ht]
\centering
\begin{tikzpicture} 

\draw[line width=.5pt ]  (-6,0) .. controls (-2,0) and (1,0)  .. (6,3.5);
\draw[line width=.5pt ]  (-6,0) .. controls (-2,0) and (1,0)  .. (6,2);
\draw[line width=.5pt ]  (-6,0) .. controls (-2,0) and (1,0)  .. (6,.5);

\draw[line width=2.5pt, gray]   (0.02,0.52).. controls (1.6,1)  .. (3.4,1.87);
\draw[line width=2.5pt, gray]   (0.15,0.32).. controls (2,.65)  .. (3.65,1.15);
\draw[line width=2.5pt, gray]   (0.3,0.08).. controls (2,.2)  .. (3.65,0.27);

 \draw[thick]  (2,1.7).. controls (1,1.3) and 
(.9,.9) ..(1.5,0.7);
 \draw[thick]   (1.5,0.7).. controls (2.1,.5) and 
(1,.4) ..(1.3,-0.1);
 \draw[thick]   (1.3,-0.1).. controls (1.5,-.2) and 
(2.8,-.8) ..(2.9,0);
 \draw[thick]   (2.9,0).. controls (2.9,.7) and 
(2.7,.3) ..(2.7,0);
 \draw[thick]   (2.7,0).. controls (2.7,-.2) and 
(2.5,-.2) ..(2.5,0);
 \draw[thick]   (2.5,0).. controls (2.5,.3) and 
(2.8,.5) ..(3,1);
 \draw[thick]   (3,1).. controls (3.3,1.3) and 
(2.5,1.8) .. (2,1.7);

 \draw[thick]  (2,2.7).. controls (1.2,2.3) and 
(0,1.9) ..(0,0.7);
 \draw[thick]   (0,0.7).. controls (0,.5) and 
(1,-1.2) ..(1.7,-1.2);
 \draw[thick]   (1.7,-1.2).. controls (2.6,-1.2) and 
(3.5,-1) ..(4.5,-0.5);
 \draw[thick]   (4.5,-0.5).. controls (5.5,0) and 
(5,2) ..(5,3);
\draw[thick]   (5,3).. controls (4.9,3.5) and 
(4.1,3.5) ..(4,3);
\draw[thick]   (4,3).. controls (3.9,2.5) and 
(3.9,2.3) ..(4.2,1);
\draw[thick]   (4.2,1).. controls (4.3,.5) and 
(4.5,.1) ..(4.1,0);
\draw[thick]   (4.1,0).. controls (3.3,0) and 
(3.8,.5) ..(3.7,1);
\draw[thick]   (3.7,1).. controls (3.6,1.5) and 
(3,3) .. (2,2.7);

 \node at(-6,0.3)[right]{$0$};

\draw[] (2,.7)circle(1.5cm);
\draw[fill=black] (2,.7)circle(3pt);

 \node at(1.8,1)[right]{$\small p'_1(t)$};
 
  \node at(5.1,1)[right]{$\small B'_1(t)$};
 
  \node at(-3,1.3)[right]{$\small B_C(p'_1(t), \frac{rt}{K^2})$};
  
    \node at(-1.2,-1)[left]{$\small  B'_2(t)$};

\draw[very thin] (2,0)--(-1,-1); 

\draw[very thin] (0.8,1)--(-2,1); 

 \end{tikzpicture} 
 \caption{The bubble trick}  
  \label{fig:4.1}
\end{figure}
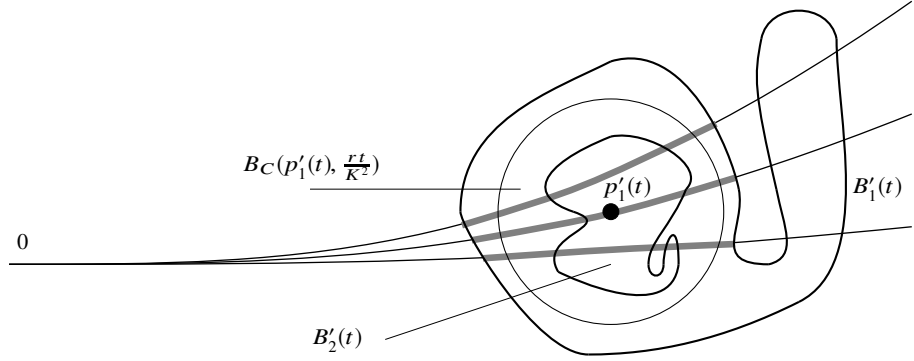
 \end{proof}

 \subsection{ The bubble trick with jumps} \label{sub:with jumps}

The  ``bubble trick"   introduced in the  proof of Theorem \ref{th:main} is a powerful tool to capture invariants of Lipschitz geometry of a complex curve germ. However, this first version of the bubble trick  is not well adapted  to explore the outer  Lipschitz geometry of   a singular space of dimension  $\geq 2$  for the following reason.    In the case of a plane curve germ $(C,0)$, the bubble trick is based on the fact that   the  distance orders  between points of $\ell^{-1}(t) \cap C$ with respect to $t \in \R$ are Lipschitz invariants, where  $\ell \colon  (C,0) \to (\C,0)$ denotes a generic projection of the curve germ.  Now, assume that $(X,0)$ is a complex surface germ  with multiplicity $m  \geq 2$ (so it has a singularity at $0$), and consider a  generic projection $\ell \colon (X,0) \to (\C^2,0)$.  Then  the critical locus of $\ell $  is a curve germ   $(\Pi_{\ell},0) \subset(X,0)$ called the {\bf polar curve}, \index{polar curve} and its image $\Delta_{\ell} = \ell(\Pi_{\ell})$ is a curve germ $(\Delta_{\ell},0) \subset (\C^2,0)$ called the {\bf discriminant curve} \index{discriminant curve} of $\ell$. Let $x \in  \C^2 \setminus \{0\}$. The number of points in $\ell^{-1}(x) \cap C$ depends on $x$: it equals $m-1$ if $x \in \Delta_{\ell}$, where $m$ denotes the multiplicity of $(X,0)$,  and $ m$ otherwise. Moreover, consider a semialgebraic real  arc germ  $p \colon t  \in [0, \eta) \mapsto p(t) \in \C^2$ such that $\norm{p(t)} = |t|$ and $\forall t \neq 0, p(t) \not\in \Delta_{\ell}$; then the distance orders between the $m$ points $p_1(t),\ldots,p_m(t)$ of $\ell^{-1}(p(t))$  will depend on the position of the arc $p(t)$ with respect to the curve $\Delta_{\ell}$. So the situation is much more complicated, even in dimension $2$. 

In \cite{NeumannPichon2014}, we use an adapted version of the bubble trick which enables us to explore the outer Lipschitz  geometry of a complex surface $(X,0)$. We call it the {\bf bubble trick with jumps}. \index{bubble trick!with jumps} Roughly speaking, it  consists in using horns 
$${\cal H}(p(t), r|t|^q) = \bigcup_{t \in [0,1)}  B((p(t), r|t|^q),$$ 
where $B(x,a)$ denotes the ball  in $X$ with center $x$ and radius $a$ and
where $p(t)$ is a real arc on $(X,0)$ such that  $||p(t)|| =\Theta(t)$ and $r \in ]0,+\infty[$, and in  exploring ``jumps" in the  topology of ${\cal H}(p(t), a|t|^q)$ when $q$ varies from $+\infty$ to $1$, for example, jumps of  the  number of connected components of ${\cal H}(p(t), r|t|^q) \setminus \{0\}$. 

In order to give a flavour of this bubble trick with jumps, we will perform it on a plane curve germ, giving an alternative proof of the implication  \eqref{it1} $\Rightarrow$ \eqref{it3} of Theorem \ref{th:main}. 
 
 \vskip0,3cm
 \noindent
 {\bf The bubble trick with jumps.} 
 
We use again the notations of the version 1 of the bubble trick from the proof of  Theorem \ref{th:main}. Let $(C,0)$ be a plane curve germ with multiplicity $M$  and with $s$ branches $C_1, \ldots, C_s$. Let $p'_1(t)$ be a continuous arc on
$C_1$ with the property that $d(0,p'_1(t))=\Theta(t)$.  Let us order the numbers $q(1,k), k=2,\ldots,M$ in decreasing order:
$$1 \leq q(1,M) < q(1,M-1) <\cdots < q(1,2) < q(1,1)=\infty. $$

 Let us consider the horns  ${\cal H}_{q,r}=    {\cal H}(p'_1(t),  r |t|^q)$ with $q \in [1,+\infty[$. 
 
 For $q >>1$ and small $\epsilon>0$, the number of connected components of    $ B(0,\epsilon) \cap \big( {\cal H}_{q,r}   \setminus \{0\} \big)$ equals $1$. Now, let us decrease $q$. For every  $\eta>0$ small enough, the number of connected components of  $ {\cal H}_{q_1,2+\eta} \setminus \{0\}$ equals $1$, while the number of connected components of   $ {\cal H}_{q_1,2-\eta} \setminus \{0\}$ is $>2$. Decreasing $q$, we have a jump in the number of connected components exactly when passing one of the rational numbers $q(1,k)$. So this enables one to recover all the characteristic exponents of $C_1$ and its  contact exponents with the other branches of $C$. We can do the same for a real arc $p'_i(t)$ in each branch $C_i$ of $(C,0)$ and this  will recover the integers $q(i,k)$ for $k=1,\ldots,M$. We then reconstruct the function $q \colon [M] \times [M] \to \Q_{\geq1}$ which characterizes the embedded topology of $(C,0)$, or equivalently the carrousel tree of $(C,0)$.  
 
 Moreover, the same jumps appear when one uses instead horns
  $${\cal H'}(p'(t), r|t|^q) = \bigcup_{t \in [0,1)}  B'((p'(t), r|t|^q),$$
 where $B'$ denotes balls with respect to a metric  $d'$ which is bilipschitz equivalent to the initial outer metric.  Indeed, if $K$ is the bilipschitz constant of such a bilipschitz change, then we have the inclusions 
 $${\cal H}\big(p'(t), \frac{rt}{K^4}\big) \ \ \subset \ \  {\cal H'}\big(p'(t), \frac{rt}{K^3}\big) \ \  \subset  \ \  {\cal H}\big(p'(t), \frac{rt}{K^2}\big)  $$
 $$ \hskip6cm\ \ \subset  \ \  {\cal H'}\big(p'(t), \frac{rt}{K^3}\big)\ \ \subset\ \   {\cal H}\big(p'(t), rt\big).$$
Then the same argument as in the version 1 of the bubble trick shows that for $q$ fixed and different from $q(1,k), k=2,\ldots,M$,  the numbers of connected components of $ B(0,\epsilon) \cap \big( {\cal H}_{q,r}   \setminus \{0\} \big)$ and $ B(0,\epsilon) \cap \big( {\cal H'}_{q,r}   \setminus \{0\} \big)$  are equal.

\begin{example} Consider again the plane curve singularity with two branches of Example \ref{ex:carrousel tree} given by the Puiseux series:
$$C_1: \ \ y=x^{3/2}+x^{13/6},\quad C_2: \ \  y=x^{5/2}\,.$$
Consider first an arc $p'_1(t)$   inside $C_1$ parametrized by $x=t \in [0 ,1)$. Then $p'_1(t)$  corresponds  to one of the two extremities of the carrousel tree of Figure \ref{fig:2} whose neighbour vertex is weighted by $5/2$.  Figure \ref{fig:bubble trick2-1} represents the intersection of the horn  
$\mathcal{H}_{q,r}$ with the line $\{ x = t \}$ for different values of $q \in [1, + \infty[$ and for $t \in \mathbb{C}^*$ of sufficiently small absolute value. 
This shows two jumps: a first jump at $q=5/2$, which says that $5/2$ is a characteristic exponent of a branch since $p_1'(t)$ and the new point appearing in the intersection belong to the same connected component $C_1$  of $C \setminus \{0\}$, while the second   jump at $3/2$ says that $3/2$ is  the contact exponent of $C_1$ with the other component since the new points appearing at $q = 3/2 -   \eta$ belong to $C_2$.   
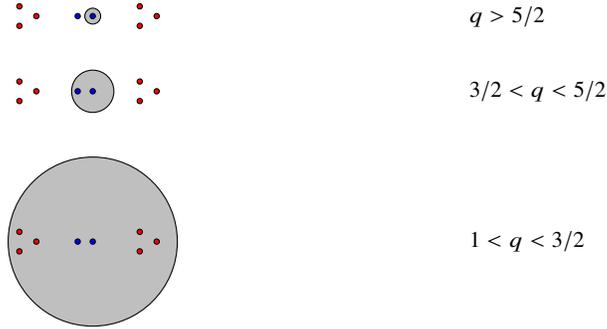
\begin{figure}[ht]
  \centering

\begin{tikzpicture}

   \node at(5,-1)[right]{$q >5/2$};

\draw[fill=red] (0.8,-1)+(0:.15)circle(1pt);
\draw[fill=red] (0.8,-1)+(120:.15)circle(1pt);
\draw[fill=red] (0.8,-1)+(240:.15)circle(1pt);
\draw[fill=red] (-0.8,-1)+(0:.15)circle(1pt);
\draw[fill=red] (-0.8,-1)+(120:.15)circle(1pt);
\draw[fill=red] (-0.8,-1)+(240:.15)circle(1pt);

\draw[fill=lightgray] (0,-1)+(0:0.1)circle(3pt);

\draw[fill=blue] (0,-1)+(0:0.1)circle(1pt);
\draw[fill=blue] (0,-1)+(180:0.1)circle(1pt);

\begin{scope}[yshift=-1cm] 

  \node at(5,-1)[right]{$3/2<q <5/2$};

\draw[fill=red] (0.8,-1)+(0:.15)circle(1pt);
\draw[fill=red] (0.8,-1)+(120:.15)circle(1pt);
\draw[fill=red] (0.8,-1)+(240:.15)circle(1pt);
\draw[fill=red] (-0.8,-1)+(0:.15)circle(1pt);
\draw[fill=red] (-0.8,-1)+(120:.15)circle(1pt);
\draw[fill=red] (-0.8,-1)+(240:.15)circle(1pt);

\draw[fill=lightgray] (0,-1)+(0:0.1)circle(8pt);

\draw[fill=blue] (0,-1)+(0:0.1)circle(1pt);
\draw[fill=blue] (0,-1)+(180:0.1)circle(1pt);

\end{scope}

\begin{scope}[yshift=-3cm] 

  \node at(5,-1)[right]{$1<q <3/2$};

\draw[fill=lightgray] (0,-1)+(0:0.1)circle(32pt);

\draw[fill=red] (0.8,-1)+(0:.15)circle(1pt);
\draw[fill=red] (0.8,-1)+(120:.15)circle(1pt);
\draw[fill=red] (0.8,-1)+(240:.15)circle(1pt);
\draw[fill=red] (-0.8,-1)+(0:.15)circle(1pt);
\draw[fill=red] (-0.8,-1)+(120:.15)circle(1pt);
\draw[fill=red] (-0.8,-1)+(240:.15)circle(1pt);

\draw[fill=blue] (0,-1)+(0:0.1)circle(1pt);
\draw[fill=blue] (0,-1)+(180:0.1)circle(1pt);

\end{scope}

\end{tikzpicture}
\caption{Sections of $C$ associated to the arc $p_1'(t)$}\label{fig:bubble trick2-1}
\end{figure}

This first exploration enables one to construct the left part of the carrousel tree of $C$ shown on Figure \ref{fig:bubble trick2-1 carrousel}, i.e., the one corresponding to the carrousel tree of $C_1$.
\begin{figure}[ht]
  \centering
 
\begin{tikzpicture}

  \draw[thin ](0,0)--(0,-2);
   \draw[thin ](0,-1)--(-1,-3);
       \draw[thin ](-1.1,-4)--(-1,-3);
         \draw[thin ](-0.9,-4)--(-1,-3);
         

\draw[fill=white] (0,0)circle(2pt);
\draw[fill=white] (0,-1)circle(2pt);

\draw[fill=white] (-1,-3)circle(2pt);
\draw[fill=blue] (-1.1,-4)circle(2pt);
\draw[fill=blue] (-0.9,-4)circle(2pt);

\node(a)at(0.2,0){   $1$};
\node(a)at(0.2,-.8){ \small{$\frac{3}{2}$}};
\node(a)at(0,-2.3){ ?};
\node(a)at(-1.2,-2.8){ \small{$\frac{5}{2}$}};
\end{tikzpicture}\caption{Partial carrousel tree associated to the arc $p_1'(t)$}\label{fig:bubble trick2-1 carrousel}
\end{figure}
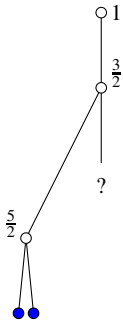

 To complete the picture, we now consider an arc $p'_2(t)$ inside $C_2$ corresponding to a component of $C_2 \cap \{x=t\}$. This means that $p'_2(t)$ corresponds to one of the 6 extremities of the carrousel tree of Figure \ref{fig:2} whose neighbour vertex is weighted by $13/6$. Figure \ref{fig:bubble trick2-2} represents the jumps for the horns   ${\cal H}_{q,r,}$ centered on $p'_2(t)$.  This shows two jumps: a first jump at $q=13/6$, which says that $13/6$ is a characteristic exponent of $C_2$, then a second jump at $3/2$ corresponding to   the contact exponent of $C_1$ and $C_2$.  

 \begin{figure}[ht]
  \centering

 \begin{tikzpicture}

   \node at(5,-1)[right]{$q >13/6$};
\draw[fill=lightgray]  (-0.8,-1)+(0:.15)circle(3pt);
\draw[fill=red] (0.8,-1)+(0:.15)circle(1pt);
\draw[fill=red] (0.8,-1)+(120:.15)circle(1pt);
\draw[fill=red] (0.8,-1)+(240:.15)circle(1pt);
\draw[fill=red] (-0.8,-1)+(0:.15)circle(1pt);
\draw[fill=red] (-0.8,-1)+(120:.15)circle(1pt);
\draw[fill=red] (-0.8,-1)+(240:.15)circle(1pt);

\draw[fill=blue] (0,-1)+(0:0.1)circle(1pt);
\draw[fill=blue] (0,-1)+(180:0.1)circle(1pt);

\begin{scope}[yshift=-1cm] 

  \node at(5,-1)[right]{$3/2<q <13/6$};
\draw[fill=lightgray]  (-0.8,-1)+(0:.15)circle(10pt);

\draw[fill=red] (0.8,-1)+(0:.15)circle(1pt);
\draw[fill=red] (0.8,-1)+(120:.15)circle(1pt);
\draw[fill=red] (0.8,-1)+(240:.15)circle(1pt);
\draw[fill=red] (-0.8,-1)+(0:.15)circle(1pt);
\draw[fill=red] (-0.8,-1)+(120:.15)circle(1pt);
\draw[fill=red] (-0.8,-1)+(240:.15)circle(1pt);

\draw[fill=blue] (0,-1)+(0:0.1)circle(1pt);
\draw[fill=blue] (0,-1)+(180:0.1)circle(1pt);

\end{scope}

\begin{scope}[yshift=-3.6cm] 

  \node at(5,-1)[right]{$1<q <3/2$};
\draw[fill=lightgray]  (-0.8,-1)+(0:.15)circle(50pt);

\draw[fill=red] (0.8,-1)+(0:.15)circle(1pt);
\draw[fill=red] (0.8,-1)+(120:.15)circle(1pt);
\draw[fill=red] (0.8,-1)+(240:.15)circle(1pt);
\draw[fill=red] (-0.8,-1)+(0:.15)circle(1pt);
\draw[fill=red] (-0.8,-1)+(120:.15)circle(1pt);
\draw[fill=red] (-0.8,-1)+(240:.15)circle(1pt);

\draw[fill=blue] (0,-1)+(0:0.1)circle(1pt);
\draw[fill=blue] (0,-1)+(180:0.1)circle(1pt);

\end{scope}

\end{tikzpicture}
\caption{Sections of $C$ associated to the arc $p_2'(t)$}\label{fig:bubble trick2-2}
\end{figure}
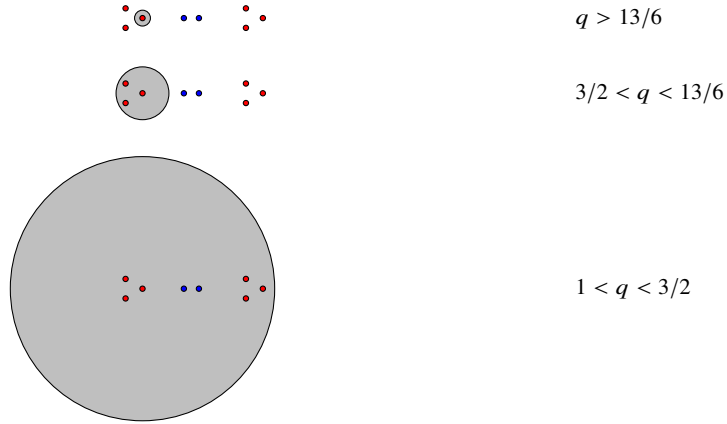

This   exploration of $C_2$ enables one to construct the right part of the carrousel tree of $C$ shown on Figure \ref{fig:bubble trick2-2 carrousel}, i.e., the one corresponding to the carrousel tree of $C_2$.
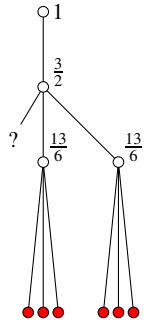
\begin{figure}[ht]
  \centering
 
\begin{tikzpicture}

  \draw[thin ](0,-2)--(0.2,-4);
   \draw[thin ](0,-2)--(-0.2,-4);
  \draw[thin ](0,0)--(0,-4);
   \draw[thin ](0,-1)--(-.3,-1.5);
         
         \draw[thin ](0,-1)--(1,-2);
            \draw[thin ](1,-4)--(1,-2);
       \draw[thin ](1.2,-4)--(1,-2);
         \draw[thin ](0.8,-4)--(1,-2);

\draw[fill=white] (0,0)circle(2pt);
\draw[fill=white] (0,-1)circle(2pt);
\draw[fill=white] (0,-2)circle(2pt);
\draw[fill=red] (0,-4)circle(2pt);
\draw[fill=red] (-0.2,-4)circle(2pt);
\draw[fill=red] (0.2,-4)circle(2pt);


\draw[fill=white] (1,-2)circle(2pt);
\draw[fill=red] (1,-4)circle(2pt);
\draw[fill=red] (0.8,-4)circle(2pt);
\draw[fill=red] (1.2,-4)circle(2pt);

\node(a)at(-0.4,-1.7){  ?};

\node(a)at(0.2,0){   $1$};
\node(a)at(0.2,-.8){ \small{$\frac{3}{2}$}};
\node(a)at(0.2,-1.8){ \small{$\frac{13}{6}$}};
\node(a)at(1.2,-1.8){ \small{$\frac{13}{6}$}};

\end{tikzpicture}\caption{Partial carrousel tree associated to the arc $p_2'(t)$ }\label{fig:bubble trick2-2 carrousel}
\end{figure}

Merging the two partial carrousel trees above, we obtain the carrousel tree of Figure \ref{fig:2}, recovering the embedded topology of $(C,0)$.
 \end{example}

 \section{The thick-thin decomposition of a surface singularity} \label{part 3}

\subsection{Fast loops as obstructions to metric conicalness} \label{sec:fast loops}

We know that every complex curve germ $(C,0)
  \subset (\C^N,0)$ is metrically conical for the inner geometry (Proposition \ref{prop:inner}). This is no longer true in higher dimensions. The first example of a non-metrically-conical   complex analytic  germ (X,0) appeared in \cite{BirbrairFernandes2008}: 
   for $k \geq 2$, the surface singularity $A_k \colon x^2+y^2+z^{k+1}=0$ is not metrically conical for the inner metric\footnote{Notice that in the real algebraic setting, it is easy to construct germs with dimension $\geq 3$   which are not metrically conical for the inner geometry. For example a  $3$-dimensional horn-shaped germ $(X,0)$ whose link $X^{(\epsilon)}$ is a $2$-torus with diameter $\Theta(\epsilon^2)$}. 
  The examples in \cite{BirbrairFernandesNeumann2008,BirbrairFernandesNeumann2009, BirbrairFernandesNeumann2010} then suggested that failure of metric conicalness is common. For example, among ADE singularities of surfaces, only $A_1$ and $D_4$ are metrically conical (Exercise \ref{ex:ADE}).  In \cite{BirbrairFernandesNeumann2010} it is also shown that the inner Lipschitz geometry of a singularity may not be determined by its topological type. 
  
  A complete classification of the Lipschitz inner geometry of normal complex surfaces is presented in \cite{BirbrairNeumannPichon2014}. It is based on the existence of the so-called thick-thin decomposition of the surface into two semi-algebraic sets. The aim of Sections \ref{sec:fast loops} to \ref{sec:thick-thin resolution} is to describe this decomposition. 
  
 The simplest  obstruction to the metric conicalness  of a germ $(X,0)$  is the existence of {\it fast loops} (see Definition \ref{def:fast loop} below). 
 
 Let $p$ and $q$ be two pairwise coprime positive integers such that $p\geq q$. Set $\beta=\frac{p}{q}$. The prototype of a fast loop is the {\bf $\beta$-horn}, which is the following semi-algebraic surface in $\mathbb{R}^3$: \index{$\beta$-horn}
 
 $${\cal H}_{\beta} =\{(x,y,z)\in \R^2 \times \R^+ \colon (x^2+y^2)^q=(z^2)^p\}.$$ 
 
   \begin{figure}[ht]
\centering
\begin{tikzpicture} 

\draw (0,3) ellipse (1cm and 0.3cm);
 \draw[thin] (-1,3)--(0,0);
  \draw[thin] (1,3)--(0,0);
   \node at(0,-0.5){$\beta=1$};
  
  \begin{scope}[xshift=4cm]
  \draw (0,3) ellipse (1cm and 0.3cm);
  \draw[thin]   (-1,2.95).. controls (-0.1,2)  .. (0,0);
   \draw[thin]   (1,2.95).. controls (0.1,2)  .. (0,0);
      \node at(0,-0.5){$\beta>1$};

 \end{scope}

 \end{tikzpicture} 
  \caption{The $\beta$-horns ${\cal H}_{\beta}$}\label{fig:5}
\end{figure}
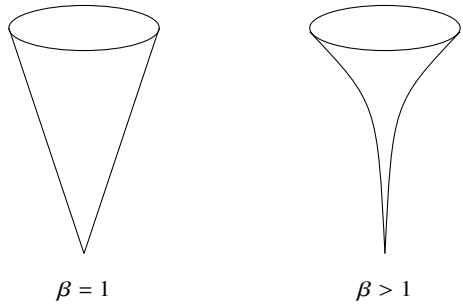

\begin{exercise} \label{exercise:horns} Show that ${\cal H}_{\beta}$ is inner bilipschitz homeomorphic to ${\cal H}_{\beta'}$ if and only if $\beta=\beta'$. \footnote{{\it hint:}  the length of the family of curves $C_t = {\cal H}_{\beta} \cap \{z=t\}$ is a $\Theta(t^{\beta})$ and this is invariant by a bilipschitz change of the metric. Show that such a family of curves cannot exist in ${\cal H}_{\beta'}$ if $\beta' \neq \beta$.}
  \end{exercise} 
  
 ${\cal H}_{1}$ is a straight cone, so it is metrically conical.  As a consequence of Exercise \ref{exercise:horns}, we obtain that for $\beta>1$, ${\cal H}_{\beta}$ is not metrically conical. For $t>0$, set $\gamma_{t}={\cal H}_{\beta} \cap \{z=t\}$. When $\beta>1$, the family of curves $(\gamma_t)_{t>0}$ is a {\it fast loop} inside ${\cal H}_{\beta}$. More generally:

\begin{definition} \label{def:fast loop} Let $(X,0) \subset (\R^n,0)$ be a semianalytic germ.  
    A  {\bf fast loop} \index{fast loop} in $(X,0)$ is a continuous family of loops  $\{ \gamma_\epsilon\colon
  S^1\to  X^{(\epsilon)}  \}_{0<\epsilon\le \epsilon_0}$ such that: 
  \begin{enumerate}
\item $ \gamma_\epsilon$ is essential  (i.e.,   homotopically non trivial)  in the link   $ X^{(\epsilon)}= X\cap S_{\epsilon}$;  
\item   there exists $q>1$ such that $$\lim_{\epsilon \to 0} \frac{\hbox{length}(\gamma_\epsilon)}{ \epsilon^q}=0.$$
\end{enumerate}
    \end{definition} 
    
In the next section, we will define what we call {\bf the thick-thin decomposition of a normal surface germ} $(X,0)$. It consists in decomposing $(X,0)$  as a union of two semi-algebraic sets 
$ (X,0) = (Y,0) \bigcup  (Z,0)$ where $(Z,0)$ is {\it thin} (Definition \ref{def:thin}) and where $(Y,0)$ is  {\it thick} (Definition \ref{def:thick}).   The  thin part $(Z,0)$  will contain all the fast loops of $(X,0)$ inside a Milnor ball with radius $\epsilon_0$. The  thick part $(Y,0)$ is the  closure of the complement of the thin part and has the property that it contains a maximal metrically conical set. This  enables one to characterize the germs $(X,0)$ which are metrically conical:

\begin{theorem} \cite[Theorem 7.5, Corollary 1.8]{BirbrairNeumannPichon2014}
Let $(X,0)$ be a normal complex surface  and let $$ (X,0) = (X_{thick},0) \bigcup  (X_{thin},0)$$ be its  thick-thin decomposition.  

Then $(X,0)$ is metrically conical if and only if $X_{thin} = \emptyset$, so  $ (X,0) = (X_{thick},0) $. 
\end{theorem}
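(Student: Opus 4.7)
The plan is to prove the equivalence by relating the nonemptiness of $X_{thin}$ to the presence of fast loops, and observing that fast loops are an obstruction to metric conicalness. The easy direction is immediate: if $X_{thin}=\emptyset$, then $(X,0)=(X_{thick},0)$, and by the very construction of the thick-thin decomposition the thick part is inner bilipschitz equivalent to a straight subcone of $\cone(X^{(\epsilon_0)})$; when this subcone exhausts $X$, the germ itself is bilipschitz equivalent to the cone over its link and hence metrically conical.

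For the converse direction, I would first observe that a straight cone $\cone(L)$ over a compact link admits no fast loops. The slice of the cone at distance $\epsilon$ from the apex is the rescaled link $(\epsilon/\epsilon_0)\cdot L$, so any essential loop $\gamma_\epsilon$ on this slice has length at least $c\,\epsilon$, where $c>0$ is the infimum of lengths of loops in its free homotopy class on $L$ (positive by compactness of $L$). Since $\epsilon^q=o(\epsilon)$ for any $q>1$, the limit condition in Definition \ref{def:fast loop} is violated. Because bilipschitz equivalences distort lengths by a bounded factor, the absence of fast loops is preserved under inner bilipschitz homeomorphism; hence a metrically conical $(X,0)$ admits no fast loops.

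It then remains to show that $X_{thin}\neq\emptyset$ forces the existence of a fast loop in $(X,0)$. Here I would invoke the structural description of the thin part developed in the subsequent sections and in \cite{BirbrairNeumannPichon2014}: up to inner bilipschitz equivalence, each component of $X_{thin}$ is built from pieces modeled on $\beta$-horns with some $\beta>1$, and such a piece carries a continuous family of loops $\gamma_\epsilon$ of length $\Theta(\epsilon^\beta)$, obtained as the waist circles of the horn. One must then verify that such a loop remains essential in the ambient link $X^{(\epsilon)}$ and not merely within the thin piece; this follows from the fact that thin pieces are glued to the thick part along common incompressible boundary tori in the JSJ/plumbing decomposition of the link, so the waist circles do not bound in $X^{(\epsilon)}$.

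The main obstacle is precisely this last step: justifying that a nonempty thin part forces a \emph{globally} essential fast loop. It requires both the normal form of each thin piece as a $\beta$-horn-like germ with $\beta>1$ and a careful control of how the thin pieces are attached to the thick part, both of which rely on the resolution-theoretic and plumbing machinery developed in \cite{BirbrairNeumannPichon2014}. Once that structural input is in hand, the equivalence stated in the theorem follows by combining it with the bilipschitz invariance of the fast-loop property established above.
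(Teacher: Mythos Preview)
Your overall strategy matches the paper's: both directions go through fast loops, with Theorem~\ref{thm:fastloops} supplying the small-diameter fibers and \cite[Section~7]{BirbrairNeumannPichon2014} supplying their essentialness in $X^{(\epsilon)}$. Two points deserve tightening.

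For the easy direction, a thick germ is by Definition~\ref{def:thick} a \emph{nested union} of metrically conical sets $Y_\epsilon$, not itself a subcone; your sentence ``the thick part is inner bilipschitz equivalent to a straight subcone'' is not what the definition says. What actually makes the implication work when $X_{thin}=\emptyset$ is that $X^{(\epsilon)}$ is then a closed $3$-manifold, so $\partial(Y_{\epsilon_1}\cap S_{\epsilon_1})=\emptyset$, and condition~(2) of Definition~\ref{def:thick} forces all the $Y_\epsilon$ to agree on each sphere; hence $Y_{\epsilon_0}=X\cap B_{\epsilon_0}$ is itself metrically conical.

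For the converse, your model of thin pieces as $\beta$-horns with ``waist circles'' is off by a dimension. Each $Z_j$ is $4$-real-dimensional; its link $Z_j^{(\epsilon)}$ is a $3$-manifold fibering over $S^1$ with $2$-dimensional fibers of diameter $\Theta(\epsilon^{q_j})$ (Theorem~\ref{thm:fastloops}), and the candidate fast loops are curves \emph{inside} these fiber surfaces, not the fibers themselves. Your incompressibility argument does yield $\pi_1$-injectivity of $Z_j^{(\epsilon)}\hookrightarrow X^{(\epsilon)}$, but you still need a loop that is essential already in $Z_j^{(\epsilon)}$. This is exactly where the hypothesis that the corresponding Tyurina component is \emph{not a bamboo} is used: it forces the fiber surface to have nontrivial fundamental group. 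The paper does not prove this step in the notes but defers it to \cite[Section~7]{BirbrairNeumannPichon2014}, as you anticipated.
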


\subsection{Thick-thin decomposition}

\begin{definition} \label{dfn:tangent cone} Let $(Z,0) \subset (\R^n,0)$ be a semi-algebraic germ. The {\it tangent cone} \index{tangent cone}  of $(Z,0)$   is the set $T_0Z$ of vectors $v \in \R^n$ such that there exists a sequence of points $(x_k)$ in $Z \setminus \{0\}$ tending to $0$ and a sequence of positive real numbers $(t_k)$ such that $$\lim_{k \to \infty} \frac{1}{t_k} x_k = v.$$

\end{definition}

\begin{definition} \label{def:thin} A semi-algebraic germ $(Z,0) \subset (\R^n,0)$ of pure dimension  is {\bf thin} \index{thin germ} if the dimension of its tangent cone $T_0X$ at $0$ satisfies $\dim(T_0Z)<\dim(Z)$. 
\end{definition} 
 \begin{example} \label{ex:thin1} For every $\beta>1$,   the $\beta$-horn ${\cal H}_{\beta}$ is thin since $\dim({\cal H}_{\beta})=2$ while $T_0 {\cal H}_{\beta}$ is a half-line.   On the other hand,  ${\cal H}_{1}$ is not thin. 
\end{example}

\begin{example} \label{ex:thin2}  Let $\lambda \in \C^*$ and denote by  ${\cal C}_{\lambda}$  the plane curve with Puiseux parametrization $y=\lambda x^{5/3}$. Let $a, b \in \R$ such that $0<a<b$. Consider the semi-algebraic germ $(Z,0) \subset (\C^2,0)$ defined by $Z=\bigcup_{a \leq |\lambda|\leq b} {\cal C}_{\lambda}$.   The tangent cone $T_0Z$ is the complex line $y=0$, while $Z$ is $4$-dimensional, so $(Z,0)$ is thin. 

Let  $Z^{(\epsilon)}$  be the intersection of $Z$ with the boundary of the  bidisc  $\{|x|\leq \epsilon\} \times \{|y|\leq \epsilon\} $. By \cite{Durfee1983}, one obtains, up to homeomorphism (or diffeomorphism in a stratified sense),  the same link $Z^{(\epsilon)}$ as when intersecting with a round sphere. When $\epsilon>0$ is small enough,  $Z^{(\epsilon)} \subset \{|x|=\epsilon\} \times \{|y|\leq \epsilon\} $ and  the projection  $Z^{(\epsilon)} \to S^1_{\epsilon}$ defined by $(x,y) \to x$ is a locally trivial fibration whose  fibers are the  flat annuli  $A_t = Z \cap \{x=t\},  |t| = {\epsilon }$, and the lengths of the boundary components   of $A_t$ are $\Theta(\epsilon^{5/3})$. 

Notice that $Z$ can be described through a resolution as follows. Let $\sigma \colon Y \to \C^2$ be the minimal embedded resolution of the curve $  E_1 \colon y=x^{5/3}$. It decomposes into four successive blow-ups of points. Denote $E_1, \ldots, E_4$ the corresponding  components of the exceptional divisor $\sigma^{-1}(0)$ indexed by their order of creation.  Then $\sigma$  is a simultaneous resolution of the curves ${\cal C}_{\lambda}$. Therefore, the strict transform of $Z$ by $\sigma$ is a neighbourhood of $E_4$ minus   neighbourhoods of the intersecting points $E_4 \cap E_2$ and $E_4 \cap E_3$ as pictured in Figure \ref{fig:6}. The tree $T$ on the left is the dual   tree  of $\sigma$. Its vertices are weighted by the self-intersections $E_i^2$ and the arrow represents the strict transform of ${\cal C}_1$. 

\begin{figure}[ht] 
\centering
\begin{tikzpicture}
  
  \node[ ](b)at
  (-1.2,1.3){$T$}; 
  
  \node[ ](b)at (6.5,1.5){$E_4$}; 
  
   \node[ ](b)at (3.6,1.5){$E_2$}; 
  
   \node[ ](b)at (7.2,0){$E_3$}; 
   
    \node[ ](b)at (7.9,1.5){$E_1$}; 
  
    \draw[thin,>-stealth,->](0.5,1)--+(-.4,1);
        
   \node[ ]at (0.3,0.7){$-1$};  
     \node[ ]at (-0.7,0.7){$-3$};  
      \node[ ]at (-0.7,0.7){$-3$};  
       \node[ ]at (2.3,0.7){$-2$};  
       \node[ ]at (1.3,0.7){$-2$};   
         
  \draw[ xshift=-0.5cm,yshift=1cm,thin] (0+2pt,0)--(2.95,0);
  \draw[ xshift=-0.5cm,yshift=1cm, fill ](0,0)circle(2pt);
 \draw[  xshift=-0.5cm,yshift=1cm,fill] (1,0)circle(2pt);
 \draw[ xshift=-0.5cm,yshift=1cm, fill] (2,0)circle(2pt);
 \draw[xshift=-0.5cm,yshift=1cm, fill](3,0)circle(2pt);
  
 \begin{scope}[xshift=-3cm]
 \draw[scale=0.7,xshift=11cm,fill, lightgray](2,2)+(-45:0.2) arc
  (-45:135:0.2)--++(-2,-2) arc (135:315:0.2)--++(2,2);
 \draw[scale=0.7,xshift=13cm, fill, white](1,0)+(-135:0.2) arc
  (-135:45:0.2)--++(-2,2) arc (45:225:0.2)--++(2,-2);
 \draw[scale=0.7,xshift=11cm,fill,  white](0.5,0.5)--++(-45:0.2)--++(45:0.2)
  --++(135:0.4)--++(225:0.4)--++(-45:0.4)--++(45:0.2);
 \draw[scale=0.7,xshift=12cm, yshift=1cm, fill,
  white](0.5,0.5)--++(-45:0.2)--++(45:0.2)
  --++(135:0.4)--++(225:0.4)--++(-45:0.4)--++(45:0.2);
 \draw[scale=0.7,xshift=11cm,very thin](2,2)+(-45:0.2) arc
  (-45:135:0.2)--++(-2,-2) arc (135:315:0.2)--++(2,2);
 \draw[scale=0.7,xshift=13cm,very thin](2,2)+(-45:0.2) arc
  (-45:135:0.2)--++(-2,-2) arc (135:315:0.2)--++(2,2);
 \draw[scale=0.7,xshift=11cm,very thin](1,0)+(-135:0.2) arc
  (-135:45:0.2)--++(-2,2) arc (45:225:0.2)--++(2,-2);
 \draw[scale=0.7,xshift=13cm, very thin](1,0)+(-135:0.2) arc
  (-135:45:0.2)--++(-2,2) arc (45:225:0.2)--++(2,-2);
 \draw[scale=0.7,xshift=11cm](0,0)--(2,2);
 \draw[scale=0.7,xshift=11cm](-1,2)--(1,0);
 \draw[scale=0.7,xshift=11cm](1,2)--(3,0);
 \draw[scale=0.7,xshift=11cm](2,0)--(4,2);
 \end{scope}
   \draw[thin,>-stealth,->](5.7,0.3)--+(-0.8,0.8);
 \node[ ](b)at (4.7,1.3){${\cal C}_{\lambda}^*$}; 
\end{tikzpicture} 
\caption{The strict transform of $Z$ by $\sigma$} \label{fig:6}
\end{figure}
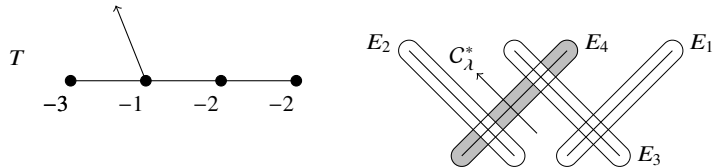

\end{example}

\begin{definition} Let $1< q\in \Q$. A {\bf $q$-horn neighbourhood} 
\index{horn neighbourhood} of a semi-algebraic
germ $(A,0)\subset (\R^N,0)$ is  a set of the form $\{x\in \R^n\cap
B_\epsilon:d(x,A)\le c|x|^q\}$ for some $c>0$, where $d$ denotes the Euclidean metric.   
\end{definition} 

The following proposition helps picture ``thinness'' 

\begin{proposition}  \cite[Proposition 1.3]{BirbrairNeumannPichon2014}  Any thin semi-algebraic germ $(Z,0)\subset (\R^N,0)$ is contained in some $q$-horn
  neighbourhood of its tangent cone $T_0 Z$. 
\end{proposition}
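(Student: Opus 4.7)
The plan is to reduce the claim to a one-variable semi-algebraic estimate on the normalized distance function $g(x)=d(x,T_0Z)/\abs{x}$, defined on $(Z\setminus\{0\})\cap B_{\epsilon_0}$. Since $0\in T_0Z$ we have $0\le g\le 1$, and $g$ is semi-algebraic (because $T_0Z$ is semi-algebraic, and both $d(\cdot,T_0Z)$ and $\abs{\cdot}$ are semi-algebraic functions). The strategy has two steps: (i) show $g(x)\to 0$ as $x\to 0$ in $Z$, and (ii) upgrade that pointwise limit to a polynomial decay $g(x)\le C\abs{x}^{\alpha}$ using semi-algebraicity.

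For step (i), let $x_k\in Z\setminus\{0\}$ with $x_k\to 0$; set $r_k=\abs{x_k}$ and pass to a subsequence so that $x_k/r_k\to v$ for some unit vector $v$. Definition \ref{dfn:tangent cone}, applied with $t_k=r_k$, gives $v\in T_0Z$. Since $T_0Z$ is a cone, $r_k v\in T_0Z$, whence
$$d(x_k,T_0Z)\le \abs{x_k-r_k v}=r_k\cdot\abs{x_k/r_k-v},$$
so $g(x_k)\le \abs{x_k/r_k-v}\to 0$. Every subsequence of $(x_k)$ has a further subsequence along which $g$ tends to $0$, hence $g(x)\to 0$ as $x\to 0$ in $Z$. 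For step (ii), I would introduce the one-variable function
$$\rho(t)=\sup\{g(x):x\in Z,\ \abs{x}=t\},\qquad t\in (0,\epsilon_0],$$
whose graph is cut out by a first-order formula, so Tarski--Seidenberg ensures that $\rho$ is semi-algebraic; boundedness of $g$ makes the supremum finite. A sup-extraction argument shows $\lim_{t\to 0^+}\rho(t)=0$: otherwise, points $x_n\in Z$ with $\abs{x_n}=t_n\to 0$ and $g(x_n)\ge \delta/2$ would contradict step (i). By the Puiseux expansion of a semi-algebraic function of one real variable, there exist $\alpha>0$, $C>0$ and $\epsilon_1\in(0,\epsilon_0]$ with $\rho(t)\le C\, t^{\alpha}$ for $t\in(0,\epsilon_1]$. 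Setting $q=1+\alpha>1$ and $c=C$, we conclude $d(x,T_0Z)=\abs{x}\,g(x)\le \abs{x}\,\rho(\abs{x})\le c\,\abs{x}^q$ on $Z\cap B_{\epsilon_1}$, which is exactly the desired $q$-horn containment.

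The main obstacle I expect is packaging the passage from the pointwise limit $\rho(0^+)=0$ into the Łojasiewicz-type polynomial decay: one has to verify cleanly that $\rho$ is semi-algebraic and that its Puiseux expansion at $0$ has a strictly positive leading exponent (which is forced precisely by $\rho(0^+)=0$). A tidy alternative that sidesteps the supremum construction is to apply the Curve Selection Lemma directly: if the $q$-horn containment failed for every $q>1$, CSL would produce a real analytic arc $\gamma\colon([0,\delta),0)\to(Z,0)$ along which $g(\gamma(s))$ vanishes to no positive order at $s=0$, contradicting the Puiseux expansion of the one-variable analytic function $s\mapsto g(\gamma(s))$ combined with step (i), which forces $g(\gamma(s))\to 0$. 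Finally, note that the thinness hypothesis is not strictly used in this argument; it merely ensures the statement has geometric content, since for a germ with $T_0Z$ of the same dimension as $Z$ (e.g.\ $T_0Z=\R^N$) the horn containment is already automatic.
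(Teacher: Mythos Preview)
The paper does not actually prove this proposition; it simply cites it as \cite[Proposition 1.3]{BirbrairNeumannPichon2014} and moves on. So there is no in-paper argument to compare against, and I will just assess your proof on its own merits.

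Your main argument is correct. Step~(i) is a clean compactness argument using Definition~\ref{dfn:tangent cone}, and the ``every subsequence has a further subsequence'' trick is the right way to conclude $g\to 0$. Step~(ii) is also sound: the supremum of a semi-algebraic function over the semi-algebraic family $\{x\in Z:\abs{x}=t\}$ is semi-algebraic in $t$ by quantifier elimination, and the Puiseux expansion of a bounded one-variable semi-algebraic function tending to $0$ at $0^+$ must have strictly positive leading exponent. One small addition: you should state explicitly that $T_0Z$ is semi-algebraic (a standard fact, but it is what makes $d(\cdot,T_0Z)$ and hence $g$ semi-algebraic). Your closing observation that thinness is not actually used is correct and worth keeping.

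The CSL ``tidy alternative'' paragraph is weaker than the main argument and I would drop it. Two issues: first, $s\mapsto g(\gamma(s))$ is semi-algebraic but not analytic in general, since $d(\cdot,T_0Z)$ is not analytic; second, the negation of ``some $q$-horn contains $Z$'' is ``for every $q>1$ and every $c>0$ the containment fails,'' and the bad set $\{x\in Z:d(x,T_0Z)>c\abs{x}^q\}$ depends on both $q$ and $c$, so a single application of CSL does not directly produce an arc on which $g$ fails to vanish to positive order. Your $\rho$-argument already handles everything, so the alternative adds nothing.
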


We will now define thick semi-algebraic sets. The definition is built on the following observation. Let $(X,0) \subset (\R^n,0)$ be a real algebraic germ; we would like to decompose  $(X,0)$ into two semialgebraic sets $(A,0)$ and $(B,0)$ glued along their boundary germs, where $(A,0)$ is thin and $(B,0)$ is metrically conical. But try to glue  a thin germ $(A,0)$  with a metrically conical germ $(B,0)$ so that they intersect only along their boundary germs.... It is not possible!  There would be a hole between them (see Figure \ref{fig:7}). So we have to replace $(B,0)$ by something else than conical. 

   \begin{figure}[ht]
\centering
\begin{tikzpicture}

  \path[ fill=lightgray] 
(0,0) -- (-0.55,2.44)  
(-0.55,2.44)    .. controls (0.1,1.8)   and (-0.1,1)  .. (0,0); 


  \draw[thick]   (-1,2.95).. controls (-0.1,2)  .. (0,0);
   \draw[thick]   (1,2.95).. controls (0.1,2)  .. (0,0);

 \draw[thick]   (-0.7,3.2)-- (0,0);
  \draw[thick]   (-2,2.7)-- (0,0);
  
   \draw[very thin]   (-2,1.5)-- (-0.2,1.4);
   
     \node at(-2.5,1.5){hole};

     \node at(-1,2){$B$};
     
      \node at(0,2.5){$A$};

 \end{tikzpicture}
  \caption{Trying to glue a thin germ with a metrically conical germ}\label{fig:7}
\end{figure}
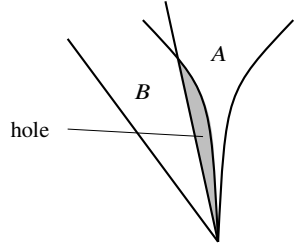

``Thick'' is a generalization of
  ``metrically conical.'' Roughly speaking, a thick
algebraic set is obtained by slightly inflating a metrically conical
set in order that it can interface along its boundary
with thin parts. The precise definition is as follows:

\begin{definition} \label{def:thick}
Let $B_\epsilon\subset
  \R^N$ denote the ball of radius $\epsilon$ centered at the origin,
  and $S_\epsilon$ its boundary.  A semi-algebraic germ $(Y,0)\subset
  (\R^N,0)$ is {\bf thick} \index{thick germ} if there exists $\epsilon_0>0$ and $K\ge
  1$ such that $Y\cap B_{\epsilon_0}$ is the union of subsets
  $Y_\epsilon$, $\epsilon\le\epsilon_0$ which are metrically conical
  with bilipschitz constant $K$ and satisfy the following properties
  (see Fig.~\ref{fig:1}):
  \begin{enumerate}
\item $Y_\epsilon\subset B_\epsilon$, $Y_\epsilon\cap
  S_\epsilon=Y\cap S_\epsilon$ and $Y_\epsilon$ is metrically
  conical;
\item For $\epsilon_1<\epsilon_2$ we have $Y_{\epsilon_2}\cap
  B_{\epsilon_1}\subset Y_{\epsilon_1}$ and this embedding respects
  the conical structures.  Moreover, the difference $(
  Y_{\epsilon_1}\cap S_{\epsilon_1})\setminus (Y_{\epsilon_2}\cap
  S_{\epsilon_1})$ of the links of these cones is 
  homeomorphic to $\partial(Y_{\epsilon_1}\cap
  S_{\epsilon_1})\times[0,1)$.
  \end{enumerate}
  \end{definition}

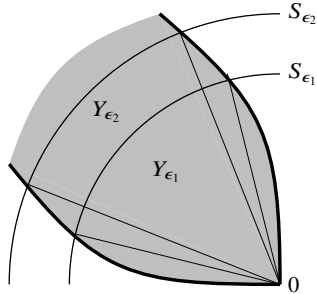
\begin{figure}[ht]
\centering
\begin{tikzpicture} 
 \node at(4,0)[right]{$0$};
 \node at(4,2.8)[right]{$ S_{\epsilon_1}$};
 \node at(4,3.6)[right]{$ S_{\epsilon_2}$};
 \path[ fill=lightgray] 
(4,0) .. controls (2,0) and (1.6,0.1)  .. (0.4,1.6) (0.4,1.6)--(2.4,3.6)
 (2.4,3.6)   .. controls (3.8,2.4)   and (4,2)  .. (4,0); 
 \path[ fill=lightgray]  (4,0)--(0.4,1.6).. controls (0.8,2.8) and 
(1.2,3.2)..(2.4,3.6)--cycle;
\draw[line width=1.3pt ]  (4,0) .. controls (2,0) and (1.6,0.1)  .. (0.4,1.6);
\draw[line width=1.3pt ]  (4,0) .. controls (4,2) and (3.8,2.4)  .. (2.4,3.6);
\draw[line width=0.5pt ]  (0.4,0) arc (180:90:3.6);
\draw[line width=0.5pt ]  (1.2,0) arc (180:90:2.8);
\draw[line width=0.25pt ] (4,0)--(0.65,1.33); 
\draw[line width=0.25pt ] (4,0)--(2.65,3.4); 
\draw[line width=0.25pt ] (4,0)--(1.28,0.67); 
\draw[line width=0.25pt ] (4,0)--(3.3,2.8); 
 \node at(2.5,1.5)[]{$ Y_{\epsilon_1}$};
 \node at(1.7,2.3)[]{$ Y_{\epsilon_2}$};
 \end{tikzpicture} 
  \caption{Thick germ}
  \label{fig:8}
\end{figure}

Clearly, a semi-algebraic germ cannot be both thick and thin.

\begin{example} The set $Z=\{(x,y,z)\in \R^3: x^2+y^2\le z^3\}$
gives a thin germ at $0$ since it is a $3$-dimensional germ whose
tangent cone is half the $z$-axis.  The intersection $Z\cap B_\epsilon$
is contained in a closed $3/2$-horn neighbourhood of the
$z$-axis.  The complement in $\R^3$ of this thin set is thick. 
\end{example}

\begin{example} Consider again the thin germ $(Z,0) \subset (\C^2,0)$ of Example \ref{ex:thin2}.  Then the germ  $(Y,0)$ defined by  $Y= \overline{\C^2 \setminus Z}$   is  a thick germ.  {To give an imaged picture of it, fix $\eta >0$  and consider the conical set $W \subset \C^2$ defined as the union of the complex lines $y=\alpha x$ for $|\alpha| \geq \eta$; then $(Y,0)$ is obtained by ``slighly inflating'' $W$.}
Notice that the strict transform of $Y$ by the resolution $\sigma$ introduced in  Example \ref{ex:thin2} is a neighbourhood of the union of curves $E_1  \cup E_3$. 
\end{example}

For any semi-algebraic germ $(A,0)$ of $(\R^N,0)$, we write 
$A^{(\epsilon)}:=A\cap S_\epsilon\,\subset\, S_\epsilon$. When $\epsilon$ is
sufficiently small, $A^{(\epsilon)}$ is the $\epsilon$-link of $(A,0)$.

\begin{definition}[Thick-thin decomposition]\label{def:thick-thin}
  A {\bf thick-thin decomposition}  \index{thick-thin decomposition} of the normal complex surface germ
  $(X,0)$ is a decomposition of it as a union of semi-algebraic germs of pure
  dimension $4$ called {\bf pieces}:
  \begin{equation}
    \label{eq:thick-thin}
    (X,0)=\bigcup_{i=1}^r(Y_i,0)\cup\bigcup_{j=1}^s(Z_j,0)\,,
  \end{equation}
  such
  that the  $Y_i\setminus\{0\}$ and $Z_j\setminus\{0\}$ are connected and:
  \begin{enumerate}
  \item Each $Y_i$ is thick and each $Z_j$ is thin;
  \item\label{it:1} The $Y_i\setminus\{0\}$ are pairwise disjoint and the
    $Z_j\setminus\{0\}$ are pairwise disjoint;
  \item\label{it:2} If $\epsilon_0$ is chosen small enough such  that
    $S_\epsilon$ is transverse to each of the germs $(Y_i,0)$
    and $(Z_j,0)$ for $\epsilon\le \epsilon_0$, then 
    $X^{(\epsilon)}=\bigcup_{i=1}^rY^{(\epsilon)}_i\cup\bigcup_{j=1}^sZ^{(\epsilon)}_j$
    decomposes the $3$-manifold $X^{(\epsilon)}\subset
    S_\epsilon$ into connected submanifolds with boundary,
    glued along their boundary components.
\end{enumerate}
\end{definition}
\begin{definition}\label{def:minimal}
  A thick-thin decomposition is {\bf minimal} if
  \begin{enumerate}
  \item\label{it:min1} the tangent cone of
  its thin part $\bigcup_{j=1}^sZ_j$ is contained in the
  tangent cone of the thin part 
  of any other thick-thin decomposition and
\item the number $s$ of its thin pieces is minimal among thick-thin
  decompositions satisfying \eqref{it:min1}.
  \end{enumerate}
\end{definition}

The following  theorem expresses the existence and  uniqueness of
a  minimal thick-thin decomposition of a normal complex surface singularity.  
\begin{theorem} \ \cite[Section 8]{BirbrairNeumannPichon2014} \label{th:uniqueness}
Let $(X,0)$ be a normal complex surface germ. Then a minimal thick-thin decomposition of $(X,0)$ exists.  For any two
  minimal thick-thin decompositions of $(X,0)$, there exists $q>1$ and
  a homeomorphism of the germ $(X,0)$ to itself which takes  one
  decomposition to the other and moves each $x\in X$ by a distance at most
  $\lVert x \rVert^q$.
\end{theorem}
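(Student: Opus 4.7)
The plan is to construct a minimal thick-thin decomposition from a good resolution of $(X,0)$ and to establish uniqueness up to a $q$-close homeomorphism by comparing the induced structures on the links $X^{(\epsilon)}$. For existence I would fix a good resolution $\pi\colon\widetilde X\to X$ that also resolves the base points of a generic linear projection. Classify each irreducible component $E_v$ of $\pi^{-1}(0)$ as an \emph{L-node} if a tubular neighbourhood of $E_v$ maps to a germ whose tangent cone at $0$ is $2$-complex-dimensional, and as a \emph{non-L-node} otherwise. Partition the dual graph into maximal connected L-subgraphs $\Gamma^Y_1,\dots,\Gamma^Y_r$ and non-L-subgraphs $\Gamma^Z_1,\dots,\Gamma^Z_s$. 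The $\pi$-images of tubular neighbourhoods of the $\Gamma^Y_i$ give pieces $Y_i$ which are metrically conical with uniform bilipschitz constant, hence thick in the sense of Definition \ref{def:thick}; the $\pi$-images of tubular neighbourhoods of the $\Gamma^Z_j$ give pieces $Z_j$ each contained in a $q$-horn neighbourhood of its tangent cone for some $q>1$, hence thin. After adjusting the interfaces so that different pieces meet only along boundary tori on each link $X^{(\epsilon)}$, this yields a thick-thin decomposition.

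For minimality, the key observation is that a thick piece is metrically conical, and therefore cannot contain a fast loop in the sense of Definition \ref{def:fast loop}. Hence every fast loop of $(X,0)$ must lie in the thin part of \emph{any} thick-thin decomposition, so the tangent cone of the thin part always contains every tangent direction supporting a fast loop. The construction above realises exactly this lower bound, since non-L-subgraphs are precisely those carrying fast loops. Merging any two adjacent thin pieces whose union remains thin without enlarging the tangent cone, and iterating, produces a decomposition with minimal thin tangent cone and minimal number $s$ of thin pieces, proving the existence part of the theorem.

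For uniqueness, let $\mathcal D_1$ and $\mathcal D_2$ be two minimal thick-thin decompositions. By minimality their thin tangent cones agree as the same finite union of complex lines $L_1,\dots,L_k$. On each link $X^{(\epsilon)}$, the boundary tori of the thin pieces tangent to a given line $L_a$ are essential and pairwise non-parallel (this is where minimality of $s$ enters), and they represent identical isotopy classes in the JSJ decomposition of $X^{(\epsilon)}$ for both $\mathcal D_1$ and $\mathcal D_2$. Since both families of tori lie in a common $q$-horn neighbourhood of $L_a$ for some $q>1$, the JSJ isotopy can be realised \emph{inside} that horn neighbourhood by an isotopy moving each point at most $\Theta(\epsilon^q)$; assembling these isotopies as $\epsilon$ varies produces a germ self-homeomorphism of $(X,0)$ carrying $\mathcal D_1$ to $\mathcal D_2$ and moving each point by at most $\lVert x\rVert^q$. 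The main obstacle is precisely this final step: the JSJ matching of boundary tori is topologically canonical, but upgrading it to a \emph{metric} isotopy of the required strength, uniformly in $\epsilon$, requires the bubble-trick-with-jumps arguments of Section \ref{sub:with jumps} to certify that two thin families sharing the same tangent cone and JSJ class must already agree up to $\Theta(\epsilon^q)$ in each horn neighbourhood.
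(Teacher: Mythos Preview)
Your overall strategy---construct the decomposition from a resolution, then argue minimality via fast loops and uniqueness via link topology---is the right shape, but there are two concrete errors in the construction and one in the minimality argument that break the proof.

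First, your graph partition is not the one that works. You split into maximal connected $\cal L$-subgraphs and non-$\cal L$-subgraphs, but the paper's construction (Section~\ref{sec:thick-thin resolution}) is more delicate: one removes the $\cal L$-nodes, obtaining the Tyurina components, and only those Tyurina components that are \emph{not bamboos} become thin pieces; each $\cal L$-node together with its attached bamboos and strings forms a thick piece. Under your partition, a bamboo hanging off an $\cal L$-node would land in the thin part, but its image is not thin (and its link is a solid torus carrying no essential loop, hence no fast loop), so it must be absorbed into the thick side.

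Second, you write that the $Y_i$ are ``metrically conical with uniform bilipschitz constant, hence thick'', and later that ``a thick piece is metrically conical''. Neither direction is what is needed. Thick (Definition~\ref{def:thick}) is strictly weaker than metrically conical: a thick germ is a nested union of metrically conical germs that are allowed to inflate as $\epsilon$ shrinks, precisely so that they can share boundary with thin pieces. The paper explains explicitly (Figure~\ref{fig:7}) that a genuinely metrically conical germ cannot be glued to a thin germ along boundaries. So proving $Y_i$ metrically conical would be too strong (and is false in general once bamboos are attached), while proving it thick requires the Polar Wedge Lemma and the refined argument the paper defers to \cite{BirbrairNeumannPichon2014}. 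What \emph{is} proved directly (Lemma~\ref{lem:first part}) is only that $\pi(\mathscr{N}(\nu))$ is metrically conical for each $\cal L$-node $\nu$; upgrading this to thickness of the full $Y_i$ is the nontrivial step you are skipping.

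Third, your minimality argument rests on ``a thick piece is metrically conical, therefore cannot contain a fast loop''. Since thick $\neq$ metrically conical, this needs a separate argument; and even granting it, you would also need that every thin piece in your construction actually \emph{does} contain a fast loop essential in $X^{(\epsilon)}$---this is the content of \cite[Section~7]{BirbrairNeumannPichon2014} and is not automatic from the tangent-cone dimension drop.

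For uniqueness, your JSJ sketch is plausible in spirit but your pointer to Section~\ref{sub:with jumps} is off: that section treats plane curves. The paper indicates (Introduction) that the uniqueness/invariance uses the Lipschitz invariance of fast loops of minimal length in their homology class, not the bubble trick with jumps.
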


\subsection{The thick-thin decomposition in a resolution} \label{sec:thick-thin resolution}

In this section, we describe explicitly the minimal thick-thin decomposition  of a normal complex surface germ $(X,0)\subset (\C^n,0)$ in terms of a suitable resolution of $(X,0)$ as presented in \cite[Section 2]{BirbrairNeumannPichon2014}. The uniqueness of the minimal thick-thin decomposition is proved in \cite[Section 8]{BirbrairNeumannPichon2014}. 

Let $\pi\colon(\widetilde X,E)\to (X,0)$ be the minimal resolution with
the following properties:
\begin{enumerate}
\item\label{r1} It is a good resolution, i.e., the irreducible components of the exceptional divisor are smooth and meet transversely, at most two at
a time.
\item\label{r2} It  factors through the
   blow-up  $e_0 \colon X_0 \to X$ of the origin.  An irreducible component  of the exceptional divisor  $\pi^{-1}(0)$ which projects surjectively on an irreducible component of  $e_0^{-1}(0)$ will be called an {\bf $\cal L$-curve}.
\item\label{r3} No two  $\cal L$-curves intersect.  
\end{enumerate}
 This is achieved by starting with a minimal good resolution, then
blowing up to resolve any base points of a general system  of hyperplane
sections, and finally blowing up any intersection point between  $\cal L$-curves.

\begin{definition} 
Let $\Gamma$ be the resolution graph of the above resolution. A vertex of $\Gamma$ is called a {\bf node} \index{node} if it has valence  $\ge 3$ or represents a curve of genus $>0$ or represents an $\cal L$-curve. If a node represents an $\cal L$-curve it is called an {\it  $\cal L$-node} \index{node!$\cal L$-node}. By the previous paragraph, $\cal L$-nodes cannot be adjacent to each other. Other types of nodes will be introduced in Definitions  \ref{dfn:inner nodes} and \ref{dfn:outer node}. 

A {\bf string}   is a
connected subgraph of $\Gamma$ containing no nodes.
A {\bf bamboo} is a string ending in a vertex of
valence $1$.
\end{definition}

For each irreducible curve $E_\nu$ in $E$, let $N(E_\nu)$ be a small
closed tubular neighborhood of $E_{\nu}$ in $\widetilde X$.  For any subgraph $\Gamma'$ of $\Gamma$
define (see Figure \ref{fig:9.1}):
$$N(\Gamma'):= \bigcup_{\nu\in\Gamma'}N(E_\nu)\quad\text{and}\quad
{\mathscr{N}}(\Gamma'):= 
\overline{N(\Gamma)\setminus \bigcup_{\nu\notin \Gamma'}N(E_\nu)}\,.$$
\begin{figure}[ht] 
\centering
\begin{tikzpicture}
  \node[ ](a)at (0.8,1.3){$\Gamma'$}; \node[ ](b)at
  (-1,0.8){$\Gamma$}; \node[ ](c)at (4.5,-1){$N(\Gamma')$}; \node[
  ](d)at (9,-1){${{\mathscr{N}}}(\Gamma')$};\draw[
  scale=0.8,xshift=-0.5cm,yshift=1cm,thin] (0+2pt,0)--(2.95,0);\draw
  [ scale=0.8,xshift=-0.5cm,yshift=1cm ](0,0)circle(2pt);
 \draw[scale=0.8, xshift=-0.5cm,yshift=1cm,fill] (1,0)circle(2pt);
 \draw[ scale=0.8,xshift=-0.5cm,yshift=1cm, fill] (2,0)circle(2pt);
 \draw[scale=0.8,xshift=-0.5cm,yshift=1cm, thin](3,0)circle(2pt);
 \draw[scale=0.7,xshift=5cm,fill, lightgray](2,2)+(-45:0.2) arc
  (-45:135:0.2)--++(-2,-2) arc (135:315:0.2)--++(2,2);
 \draw[scale=0.7,xshift=7cm, fill, lightgray](1,0)+(-135:0.2) arc
  (-135:45:0.2)--++(-2,2) arc (45:225:0.2)--++(2,-2);
 \draw[scale=0.7,xshift=5cm,very thin](2,2)+(-45:0.2) arc
  (-45:135:0.2)--++(-2,-2) arc (135:315:0.2)--++(2,2);
 \draw[scale=0.7,xshift=7cm,very thin](2,2)+(-45:0.2) arc
  (-45:135:0.2)--++(-2,-2) arc (135:315:0.2)--++(2,2);
 \draw[scale=0.7,xshift=5cm,very thin](1,0)+(-135:0.2) arc
  (-135:45:0.2)--++(-2,2) arc (45:225:0.2)--++(2,-2);
 \draw[scale=0.7,xshift=7cm, very thin](1,0)+(-135:0.2) arc
  (-135:45:0.2)--++(-2,2) arc (45:225:0.2)--++(2,-2);
 \draw[scale=0.7,xshift=5cm](0,0)--(2,2);
 \draw[scale=0.7,xshift=5cm](-1,2)--(1,0);
 \draw[scale=0.7,xshift=5cm](1,2)--(3,0);
 \draw[scale=0.7,xshift=5cm](2,0)--(4,2);
 \draw[scale=0.7,xshift=11cm,fill, lightgray](2,2)+(-45:0.2) arc
  (-45:135:0.2)--++(-2,-2) arc (135:315:0.2)--++(2,2);
 \draw[scale=0.7,xshift=13cm, fill, lightgray](1,0)+(-135:0.2) arc
  (-135:45:0.2)--++(-2,2) arc (45:225:0.2)--++(2,-2);
 \draw[scale=0.7,xshift=11cm,fill,
  white](0.5,0.5)--++(-45:0.2)--++(45:0.2)
  --++(135:0.4)--++(225:0.4)--++(-45:0.4)--++(45:0.2);
 \draw[scale=0.7,xshift=13cm,fill,
  white](0.5,0.5)--++(-45:0.2)--++(45:0.2)
  --++(135:0.4)--++(225:0.4)--++(-45:0.4)--++(45:0.2);
 \draw[scale=0.7,xshift=11cm,very thin](2,2)+(-45:0.2) arc
  (-45:135:0.2)--++(-2,-2) arc (135:315:0.2)--++(2,2);
 \draw[scale=0.7,xshift=13cm,very thin](2,2)+(-45:0.2) arc
  (-45:135:0.2)--++(-2,-2) arc (135:315:0.2)--++(2,2);
 \draw[scale=0.7,xshift=11cm,very thin](1,0)+(-135:0.2) arc
  (-135:45:0.2)--++(-2,2) arc (45:225:0.2)--++(2,-2);
 \draw[scale=0.7,xshift=13cm, very thin](1,0)+(-135:0.2) arc
  (-135:45:0.2)--++(-2,2) arc (45:225:0.2)--++(2,-2);
 \draw[scale=0.7,xshift=11cm](0,0)--(2,2);
 \draw[scale=0.7,xshift=11cm](-1,2)--(1,0);
 \draw[scale=0.7,xshift=11cm](1,2)--(3,0);
 \draw[scale=0.7,xshift=11cm](2,0)--(4,2);
\end{tikzpicture} 
\caption{The sets ${{N}}(\Gamma')$ and  ${\mathscr{N}}(\Gamma')$}\label{fig:9.1}
\end{figure}
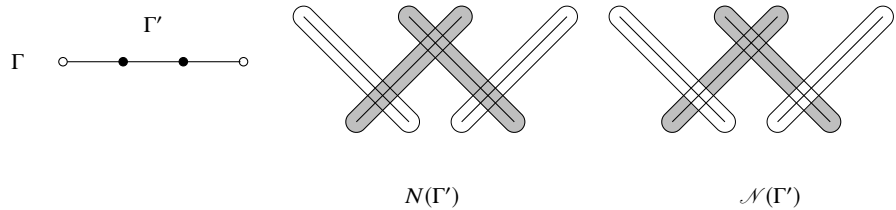

  The subgraphs of $\Gamma$ resulting by removing the $\cal L$-nodes and adjacent
edges from $\Gamma$ are called the {\bf Tyurina components} \index{Tyurina component} of
$\Gamma$ (following \cite[Definition III.3.1]{Spivakovsky1990}). 

 Let $\Gamma_1 ,
  \ldots, \Gamma_s$ denote the  Tyurina components of $\Gamma$ which are not
  bamboos, and by $\Gamma'_1,\ldots, \Gamma'_r$ the maximal connected
  subgraphs in $\Gamma \setminus \bigcup_{j=1}^s \Gamma_j$. Therefore   each $\Gamma'_i$ consists
of an $\cal L$-node  and any attached bamboos and strings. 
 
  Assume that $\epsilon_0$ is sufficiently small such that $\pi^{-1}(X\cap
  B_{\epsilon_0})$ is included in $N(\Gamma)$.
  
  For each each $i=1,\ldots,r$, define
$$Y_i:=\pi(N(\Gamma'_i))\cap  B_{\epsilon_0},$$
and for each $j=1,\ldots,s$, define
$$Z_j:=\pi({\mathscr{N}}(\Gamma_j))\cap  B_{\epsilon_0}.$$
 
 Notice that  the $Y_i$ are in
one-to-one correspondence with the $\cal L$-nodes.

\begin{theorem}  \cite[Section 2, Proposition 5.1, Proposition 6.1]{BirbrairNeumannPichon2014}  \label{th:main in intro}
\begin{enumerate}
\item  \label{thm:thick} For each  $i=1,\ldots,r$, $(Y_i,0)$ is thick;
\item  \label{thm:thin} For each  $j=1,\ldots,s$, $(Z_j,0)$ is thin;
\item  \label{thm:minimal}  The decomposition $(X,0)=\bigcup(
Z_j,0)\cup\bigcup (Y_i,0)$ is a   minimal thick-thin decomposition of $(X,0)$. 
\end{enumerate}
\end{theorem}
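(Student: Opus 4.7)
The plan is to prove the three statements in the order given, with minimality deduced from a characterisation of the tangent cone of the thin part.

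For \eqref{thm:thick}, I would exploit the hypothesis that $\pi$ factors through the blow-up $e_0\colon X_0\to X$ of the origin. Each $\mathcal{L}$-node of $\Gamma'_i$ maps surjectively onto an irreducible component of $e_0^{-1}(0)$, which sits inside the exceptional $\mathbb{P}^{n-1}$ and is naturally a compactification of a cone direction. The attached bamboos and strings in $\Gamma'_i$ meet no other $\mathcal{L}$-node (by property \eqref{r3}), so they resolve only base points of the generic hyperplane pencil along $E_\nu$ and do not influence the radial metric behaviour. I would construct the nested family $Y_\epsilon$ by taking $\pi\bigl(N(\Gamma'_i)\cap\pi^{-1}(X\cap B_\epsilon)\bigr)$ for shrinking $\epsilon$, and verify bilipschitz conicity by composing with a generic linear projection to $\mathbb{C}^2$: away from the at most finitely many tangent directions to $\mathcal{L}$-curves, such a projection restricts to a finite cover whose derivatives are bounded above and below on the (compact) link. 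This gives a uniform $K$, and the gluing property between the $Y_\epsilon$ follows because the cone structure comes from radial retraction along the blow-down map.

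For \eqref{thm:thin}, the key input is that $\Gamma_j$ is a Tyurina component containing no $\mathcal{L}$-node. I would take a generic linear form $h$ on $(X,0)$ and note that the strict transform of $\{h=0\}$ meets the exceptional divisor only along $\mathcal{L}$-curves. Writing $v_\nu(h\circ\pi)$ for the vanishing order of $h\circ\pi$ along $E_\nu$, and similarly $v_\nu(\rho\circ\pi)$ for a radial function $\rho$, the ratio $v_\nu(h\circ\pi)/v_\nu(\rho\circ\pi)$ is strictly bigger than $1$ for every $\nu\in\Gamma_j$. Applying this to every coordinate function shows that on $Z_j$, $|x_k|=o(\rho)$ in all but finitely many directions, so every point of $Z_j$ at distance $\epsilon$ from the origin lies within $C\epsilon^q$ of the lines corresponding to the neighbouring $\mathcal{L}$-curves, for some $q>1$. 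Hence $Z_j$ sits inside a $q$-horn neighbourhood of a lower-dimensional cone, forcing $\dim T_0 Z_j<4$ and proving thinness.

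For \eqref{thm:minimal}, I would first check the combinatorial axioms of Definition \ref{def:thick-thin}: connectedness of $Y_i\setminus\{0\}$ and $Z_j\setminus\{0\}$ is immediate from connectedness of $\Gamma'_i$ and $\Gamma_j$; pairwise disjointness and transverse intersection with $S_\epsilon$ follow from the tubular structure of $N(E_\nu)$; and the link decomposition is a standard plumbing argument. For minimality, I would characterise the tangent cone of the thin part intrinsically: in any thick-thin decomposition, a thick piece must project onto a $2$-dimensional component of the tangent cone of $X$, hence must ``see'' an $\mathcal{L}$-node, and the closure of the union of thick pieces must cover every irreducible component of the tangent cone. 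Consequently the thin part of any decomposition must contain all tangent directions of the non-$\mathcal{L}$ components, which are precisely those captured by $\bigcup Z_j$; and the count $s$ equals the number of non-bamboo Tyurina components, which is forced to be minimal once the tangent-cone condition is imposed.

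The main obstacle I anticipate is \eqref{thm:thick}: producing uniform bilipschitz constants for the conical model near the interface of $Y_i$ with neighbouring $Z_j$'s, because the string-and-bamboo subgraphs mediating this interface give rise to thin collars whose metric behaviour must be absorbed into the thick piece without breaking the bilipschitz bound. Part \eqref{thm:thin} reduces cleanly to a vanishing-order computation, and \eqref{thm:minimal} is largely a consequence of \eqref{thm:thick} and \eqref{thm:thin} once the uniqueness of the tangent cone of the thin part is established; it is the quantitative thickness in a neighbourhood of the gluing boundary that requires the most careful metric estimate.
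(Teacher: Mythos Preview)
Your proof of part \eqref{thm:thin} is more complicated than necessary and contains a confusion. You invoke a ratio $v_\nu(h\circ\pi)/v_\nu(\rho\circ\pi)$, but $\rho$ is a radial (hence non-holomorphic) function, so its vanishing order along $E_\nu$ is not well-defined in the usual sense; and if you replace $\rho$ by a generic linear form, the ratio is $1$, not $>1$. The paper's argument is a one-line observation: since $\Gamma_j$ contains no $\mathcal{L}$-node, the strict transform of $Z_j$ under the blow-up $e_0$ of the origin meets the exceptional divisor $e_0^{-1}(0)\subset\mathbb{P}^{n-1}$ in a single point, so the tangent cone $T_0Z_j$ is the corresponding complex line, of real dimension $2<4=\dim Z_j$.

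Your approach to part \eqref{thm:thick} has a genuine gap. You claim that a generic linear projection $\ell\colon(X,0)\to(\mathbb{C}^2,0)$ restricted to $Y_i$ has derivatives ``bounded above and below on the (compact) link''. This is false: the local bilipschitz constant of $\ell$ blows up along the polar curve $\Pi$ of $\ell$, and the strict transform of $\Pi$ may well meet the $\mathcal{L}$-curve $E_\nu$ (so $\Pi$ passes through $Y_i$). The paper's Lemma~\ref{lem:local bil bound1} makes this precise: the constant is bounded only outside a neighbourhood of $\Pi^*$ in the Nash modification. The paper handles the part of $\pi(\mathscr{N}(\nu))$ near $\Pi$ by introducing a \emph{polar wedge} $W$ around $\Pi$ and using a \emph{second} generic projection $\ell'$ whose polar curve avoids $W$; this yields metric conicalness of $\pi(\mathscr{N}(\nu))$ (Lemma~\ref{lem:first part}). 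But even this is only the first step: the full thickness of $Y_i=\pi(N(\Gamma'_i))$, which includes the attached strings and bamboos and the interface with the thin pieces, requires the Polar Wedge Lemma (Proposition~\ref{polar wedge lemma}) and the finer geometric decomposition of $(X,0)$ into $A$- and $B$-pieces. You correctly anticipate that the interface is the hard part, but the missing ingredient you need to name is the polar curve and its wedge, not the ``tangent directions to $\mathcal{L}$-curves''.

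Your sketch for \eqref{thm:minimal} is in the right spirit and the paper likewise defers the details to \cite[Section~8]{BirbrairNeumannPichon2014}.
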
 

The proof of (\ref{thm:thin}) is easy: 
\begin{proof} 
Choose an embedding  $(X,0) \subset (\C^n,0)$ and let  $e_0 \colon X_0 \to X$ be the blow-up of the origin. If $x \in \C^n\setminus \{0\}$, denote by $L_x$ the class of $x$ in $\mathbb P^{n-1}$, so $L_x$ represents  the line through $0$ and $x$  in $\C^n$.  By definition  $X_0$ is the closure in $ \C^n    \times  {\mathbb P}^{n-1}$ of the set $\{(x,L_x) \colon x \in X \setminus \{0\}  \}  $. 

The semi-algebraic set $Z_j$  is of real dimension $4$. On the other hand, the strict transform of $Z_j$ by $e_0$ meets the exceptional divisor $e_0^{-1}(0)$ at a single point $(x,L_x) $, so  the tangent cone at $0$ to $Z_j$ is the complex line $L_x$. Therefore $(Z_j,0)$ is thin. 
\end{proof}

In the next section,  we present  the first part of the  proof of the thickness of $(Y_i,0)$,   which consists of proving  the following intermediate Lemma:

\begin{lemma} \label{lem:first part} For each $\cal L$-node $\nu$,  the subset  $\pi({{\mathscr{N}}}(\nu))$ of $(X,0)$ is metrically conical.  
\end{lemma}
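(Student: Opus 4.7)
The strategy is to exploit the factorization $\pi = e_0 \circ \tau$, where $\tau \colon \widetilde{X} \to X_0$, and to construct the bilipschitz equivalence between $\pi(\mathscr{N}(\nu))$ and the straight cone over its link by working on $X_0$ near $C_\nu := \tau(E_\nu)$. The $\mathcal{L}$-node hypothesis says precisely that $C_\nu$ is a full irreducible component of the exceptional divisor $e_0^{-1}(0)$ of the blow-up of the origin; the non-$\mathcal{L}$ curves adjacent to $E_\nu$ in $\Gamma$ are then contracted by $\tau$ to the finitely many singular points of $X_0$ lying on $C_\nu$. By the very definition of $\mathscr{N}(\nu)$, its image $W := \tau(\mathscr{N}(\nu))$ is a closed tubular neighbourhood of $C_\nu$ in $X_0$ with small open disks removed around those singular points; in particular $X_0$ is smooth on $W$ and $e_0|_W$ is a local biholomorphism off $C_\nu$.

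The core local model is as follows. Around any point of $C_\nu \cap W$, choose holomorphic coordinates $(u,v)$ on $X_0$ with $C_\nu = \{u=0\}$. Since $e_0$ collapses $\{u=0\}$ to $0 \in X$, every component of $e_0$ is divisible by $u$, so $e_0(u,v) = u\, F(u,v)$ for a holomorphic $F$ with values in $\mathbb{C}^n$ satisfying $F(0,v) \neq 0$; geometrically, $F(0,v)$ is a representative of the projective direction parametrised by $v \in C_\nu$. On the compact part of $W$, $\|F\|$ and its derivatives are bounded above and uniformly away from zero. The implicit function theorem then produces, for each small $\epsilon > 0$, a unique smooth function $u_*(v,\epsilon) > 0$ with $\|e_0(u_*(v,\epsilon),v)\| = \epsilon$, satisfying $u_*(v,\epsilon)/u_*(v,\epsilon_0) = \epsilon/\epsilon_0 + O(\epsilon)$. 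I would take as candidate bilipschitz equivalence the resulting cone parametrisation
\[
\Phi \colon [0,1] \times \pi(\mathscr{N}(\nu))^{(\epsilon_0)} \to \pi(\mathscr{N}(\nu)), \qquad \Phi\bigl(t,\, e_0(u_*(v,\epsilon_0),v)\bigr) := e_0\bigl(u_*(v, t\epsilon_0),v\bigr).
\]

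A direct computation of the Jacobian of $e_0$ in the coordinates $(u,v)$, where the $u$-derivative is $\asymp \|F\|$ and the $v$-derivative is $\asymp |u|\, \|\partial_v F\|$, shows that $\Phi$ is bilipschitz chart by chart, with constants uniform in $(v,\epsilon)$; covering the compact piece of $C_\nu$ in $W$ by finitely many such charts and gluing produces the required global map. The main obstacle I anticipate is passing from this chart-wise bilipschitz control for the outer metric to a global bilipschitz bound for the inner metric on $\pi(\mathscr{N}(\nu))$, which is what metric conicalness requires by Definition: one must ensure that shortest inner paths in $\pi(\mathscr{N}(\nu))$ admit lifts to $W$ of comparable length. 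This is exactly where excising the singular points of $X_0$ on $C_\nu$ is decisive, since on $W$ the map $e_0$ is a biholomorphism off $C_\nu$ with uniformly bounded inverse derivative, whereas near those contracted points the bilipschitz bound would degenerate along the non-$\mathcal{L}$ neighbours of $E_\nu$. A secondary technicality is the globalisation of the local functions $u_*(v,\epsilon)$, which assemble into a smooth section of a suitable disk bundle over $C_\nu \cap W$; its compactness delivers the uniform constants needed to close the argument.
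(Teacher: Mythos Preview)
Your route via the blow-up $e_0$ is genuinely different from the paper's and can be made to work, but the proposal contains a real error at the point you yourself flag as the main obstacle. You claim that on $W$ the map $e_0$ has ``uniformly bounded inverse derivative''; this is false. From your own local model $e_0(u,v)=u\,F(u,v)$ one has $\partial_v e_0 = u\,\partial_v F$, so the inverse differential has norm of order $|u|^{-1}$, which blows up along $C_\nu$. This is precisely why $e_0$ is \emph{not} bilipschitz and why a naive transport of inner paths fails. The fix is to stop trying to bound $(de_0)^{-1}$ and instead compare Riemannian metrics directly: from $de_0=(F+u\partial_uF)\,du+u\,\partial_vF\,dv$ one computes that $e_0^*g_{\C^n}$ is pointwise bilipschitz on $W$ to the model $|du|^2+|u|^2|dv|^2$, since the cross terms are $O(|u|)\,|du|\,|dv|$ and are dominated by the diagonal via Cauchy--Schwarz, while non-degeneracy uses that $F(0,v)$ and $\partial_vF(0,v)$ are $\C$-linearly independent (equivalently, $v\mapsto[F(0,v)]$ immerses $C_\nu$ into $\mathbb{P}^{n-1}$). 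A pointwise equivalence of Riemannian metrics integrates to a bilipschitz equivalence of the induced length metrics, which is exactly the inner metric statement you need. A second imprecision: your ``unique $u_*(v,\epsilon)>0$'' is ill-posed since $u$ is complex; the level set $\|e_0(u,v)\|=\epsilon$ is a topological circle in the $u$-disk, so any explicit cone parametrisation must track $\arg u$ (or, cleaner, use the fibrewise scaling $u\mapsto su$).

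For comparison, the paper argues along a completely different line. It never works on $X_0$; instead it fixes a generic plane projection $\ell\colon(X,0)\to(\C^2,0)$ and invokes Lemma~\ref{lem:local bil bound1} (the local bilipschitz constant of $\ell$ is bounded outside any polar wedge, proved via the Gauss map on the Nash modification). Then $\ell$ is a finite, locally inner-bilipschitz cover on $X\setminus W$, so metrically conical subsets of $\C^2$ lift to metrically conical subsets of $X\setminus W$; sandwiching $\pi(\mathscr{N}(\nu))$ between two such lifts $\ell^{-1}(V_{\alpha_1})\subset\pi(\mathscr{N}(\nu))\subset\ell^{-1}(V_{\alpha_2})$ gives conicalness off the polar wedge, and a second generic projection handles the remaining part if $\Pi^*$ meets $E_\nu$. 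Your approach is more self-contained---no Nash modification, no polar curves, no auxiliary projection---while the paper's approach reuses machinery (Lemma~\ref{lem:local bil bound1} and the Polar Wedge Lemma) that is needed anyway for the full inner geometric decomposition.
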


 The rest of the proof of Point   (\ref{thm:thick}) of Theorem \ref{th:main in intro}  is more delicate. In particular, it  uses the key Polar Wedge Lemma \cite[Proposition 3.4]{BirbrairNeumannPichon2014} which is stated later in the present notes (Proposition \ref{polar wedge lemma}) and a geometric decomposition of $(X,0)$ into standard pieces which is a refinement of the thick-thin decomposition and which leads to the complete classification of the inner Lipschitz geometry of $(X,0)$ presented in Section \ref{sec: inner geometric decomposition}.  We refer to \cite{BirbrairNeumannPichon2014} for the proofs. 

The minimality (\ref{thm:minimal}) is proved in \cite[Section 8]{BirbrairNeumannPichon2014}.

We  now give several explicit examples of thick-thin decompositions.  More examples can be found in \cite[Section 15]{BirbrairNeumannPichon2014}.

\begin{example}
 \index{surface singularity!$E_8$}
Consider the normal surface singularity  $(X,0)\subset (\C^3,0)$ with equation $x^2+y^3+z^5=0$. This is the standard singularity $E_8$ (see \cite{Durfee1979}). Its minimal resolution  has exceptional divisor a tree of eight $\mathbb P^1$ having self intersections $-2$ and it  factors through the blow-up of the point $0$.  The dual graph  $\Gamma$ is represented on Figure \ref{fig:10}.  It can be constructed with Laufer's method (see Appendix \ref{appendix}).  The arrow represents the strict transform of a generic linear form $h= \alpha x+\beta y + \gamma z $ on $(X,0)$.  The vertex adjacent to it is the unique  $\cal L$-node and  $\Gamma$ has two nodes which are circled on the figure.  The thick-thin decomposition of $(X,0)$ has one thick piece $(Y_1,0)$ and one thin piece $(Z_1,0)$. The  subgraph $\Gamma'_1$ of $\Gamma$  such that $Y_1 = \pi(N(\Gamma'_1))$   is in black and the  subgraph   $\Gamma_1$ such that $Z_1 =\pi({\mathscr{N}}(\Gamma_1))  $  is in white. 
 
\begin{figure}[ht] 
\centering
 \begin{tikzpicture}

  \draw[thin ](-2,0)--(4,0);
      \draw[thin ](0,0)--(0,1);
      
  \draw[fill=white   ] (-2,0)circle(2pt);
  \draw[fill=white  ] (-1,0)circle(2pt);
   \draw[fill=white] (0,0)circle(4pt);
     \draw[fill =white ] (0,0)circle(2pt);      
          \draw[fill=white   ] (0,1)circle(2pt);
          
\draw[thin,>-stealth,->](4,0)--+(0.7,0.7);

  \draw[fill   ] (1,0)circle(2pt);
    \draw[fill    ] (2,0)circle(2pt);
  \draw[fill=white] (4,0)circle(4pt);
  \draw[fill   ] (4,0)circle(2pt);
   \draw[fill] (3,0)circle(2pt);
\node(a)at(5,0){$\cal L$-node};

\node(a)at(-2,-0.4){$-2$};
\node(a)at(-1,-0.4){$-2$};
\node(a)at(-0,-0.4){$-2$};
\node(a)at(1,-0.4){$-2$};
\node(a)at(2,-0.4){$-2$};
\node(a)at(3,-0.4){$-2$};
\node(a)at(4,-0.4){$-2$};
\node(a)at(-0.4,1){$-2$};

  \end{tikzpicture} 
 \caption{The thick-thin decomposition of the singularity $E_8$}\label{fig:10}
\end{figure}
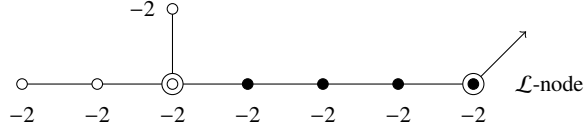
\end{example}

\begin{example}
Consider the normal surface singularity  $(X,0)\subset (\C^3,0)$ with equation $x^2+zy^2+z^3 =0$. This is the standard singularity $D_4$. Its minimal resolution  has exceptional divisor a tree of  four $\mathbb P^1$'s having self intersections $-2$ and it  factors through the blow-up of the point $0$.  The dual graph  $\Gamma$ is represented on Figure \ref{fig:11}.  It  has one $\cal L$-node, which is the central vertex circled on the figure and no other node.  Therefore, the thick-thin decomposition of $(X,0)$ has empty  thin part  and $(X,0)$ is metrically conical. The  subgraph of $\Gamma$ corresponding to the thick part is the whole $\Gamma$.

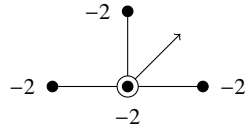
\begin{figure}[ht] 
\centering
\begin{tikzpicture}
    
         \draw[thin ](0,0)--(0,1);
        \draw[thin ](-1,0)--(1,0);
    

\draw[thin,>-stealth,->](0,0)--+(0.7,0.7);
  \draw[fill   ] (0,1)circle(2pt);
  \draw[fill   ] (-1,0)circle(2pt);
    \draw[fill   ] (1,0)circle(2pt);
     
      \draw[fill=white] (0,0)circle(4pt);
 \draw[fill   ] (0,0)circle(2pt);

 \node(a)at(-1.4,0){$-2$};
  \node(a)at(1.4,0){$-2$};
    \node(a)at(-0.4,1){$-2$};
      \node(a)at(0,-0.4){$-2$};
 

  \end{tikzpicture} 
 \caption{ The thick-thin decomposition of the singularity $D_4$}\label{fig:11}\label{fig:11}
\end{figure}
 \end{example}
 
 \begin{example} Consider the family of surface  singularities in $(X_t,0)\subset (\C^3,0)$ with equations $x^5+z^{15}+y^7z+txy^6=0$
  depending on the parameter $t \in \C$. This is a $\mu$-constant family introduced by Brian\c{c}on and Speder in \cite{BrianSpeder1975}. The  thick-thin decomposition  changes radically when $t$ becomes $0$. Indeed, the minimal resolution graph of every $(X_t,0)$ is the first graph on Figure \ref{fig:12} while the  two other resolution graphs   describe the thick-thin decompositions for $t=0$ and for $t \neq 0$. For $t \neq 0$ it has three thick components and a single thin one. For $t =0$, it has one   component of each type.  We refer to \cite[Example 15.7]{BirbrairNeumannPichon2014} for further details.

  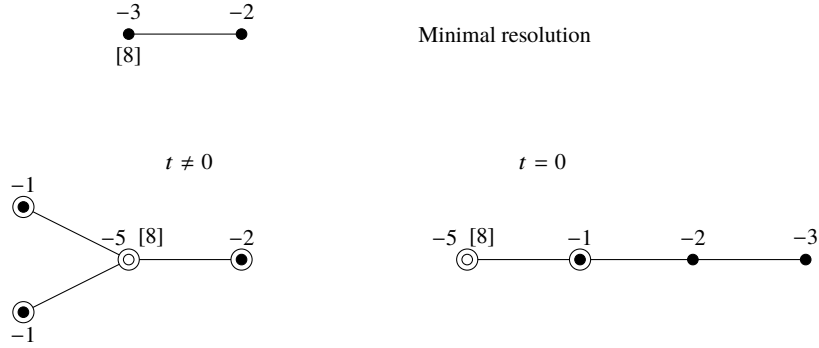
\begin{figure}[ht] 
\centering
 \begin{tikzpicture}
     \draw[thin ](0,0)--(1.5,0);
   
        \draw[fill ] (0,0)circle(2pt);
   \draw[fill    ] (1.5,0)circle(2pt);

\node(a)at(0,0.3){ $-3$};
\node(a)at(0,-0.3){ [8]};
\node(a)at(1.5,0.3){ $-2$};

\node(a)at(5,0){ Minimal resolution};

  
 \begin{scope}[yshift=-3cm]
   \draw[thin ](0,0)--+(-1.4,-0.7);
      \draw[thin ](0,0)--(1.5,0);
       \draw[thin ](0,0)--+(-1.4,0.7);
      
   \draw[fill=white] (1.5,0)circle(4pt);
      \draw[fill=white] (-1.4,0.7)circle(4pt);  
      \draw[fill=white] (-1.4,-0.7)circle(4pt);
    
  \draw[fill=white] (0,0)circle(4pt);
 \draw[fill=white] (0,0)circle(2pt);
   \draw[fill   ] (1.5,0)circle(2pt);
   
     \draw[fill    ] (-1.4,0.7)circle(2pt);
       \draw[fill   ] (-1.4,-0.7)circle(2pt);
   
\node(a)at(-0.2,0.3){ $-5$};
\node(a)at(0.3,0.3){ [8]};
    
\node(a)at(1.5,0.3){ $-2$};

\node(a)at(-1.4,1){ $-1$};
\node(a)at(-1.4,-1){ $-1$};

 \node(a)at(0.8,1.3){ $t\neq0$};

\begin{scope}[xshift=4.5cm]
 \node(a)at(1.0,1.3){ $t=0$};
  \draw[thin ](0,0)--(4.5,0);
  
    \draw[fill=white] (0,0)circle(4pt);
   \draw[fill=white   ] (0,0)circle(2pt);
   
     \draw[fill=white] (1.5,0)circle(4pt);
    \draw[fill   ] (1.5,0)circle(2pt);
    
     \draw[fill   ] (3,0)circle(2pt);
      \draw[fill  ] (4.5,0)circle(2pt);
      
      \node(a)at(-0.3,0.3){ $-5$};
         \node(a)at(0.2,0.3){ [8]};

        \node(a)at(1.5,0.3){ $-1$};
      
\node(a)at(3,0.3){ $-2$};

\node(a)at(04.5,0.3){ $-3$};

\end{scope}
\end{scope}
 \end{tikzpicture} 
  \caption{The two  thick-thin decompositions in the Brian\c{c}on-Speder family $x^5+z^{15}+y^7z+txy^6=0$  }\label{fig:12}
\end{figure}
\end{example}

\begin{exercise} \label{ex:ADE} Describe the thick-thin decomposition  of every  ADE surface singularity and show that among them, only $A_1$ and $D_4$ are metrically conical (Answer: \cite[Example 15.4]{BirbrairNeumannPichon2014}). The equations are: 
\begin{itemize}
\item $A_n \colon x^2+y^2+z^{k+1}=0, \, \ k \geq 1 $
\item  $D_n  \colon x^2+zy^2+z^{k-1}=0, \, \ k \geq 4$
\item $E_6 \colon  x^2+y^3+z^4=0$
\item $E_7 \colon  x^2+y^3+yz^3=0$
\item $E_8 \colon  x^2+y^3+z^5=0$ 
\end{itemize}

\end{exercise}

\subsection{Generic projection and inner metric: a key lemma}  
 
In this section, we state and prove  Lemma  \ref{lem:local bil bound1} which is one of the key results which will lead to the complete classification of the inner metric of $(X,0)$.  We give two applications. The first one is the proof of Lemma \ref{lem:first part}. The second  describes the inner contact between complex curves on a complex surface.    

 We first need to introduce the polar curves of generic projections and  the Nash modification of $(X,0)$. 
  
 \subsubsection{Polar curves and generic projections}
 
 Let $(X,0)\subset (\C^n,0)$ be a normal surface singularity. We restrict ourselves to those $\cal D$ in ${\mathbf G}(n-2,\C^n)$ such that the restriction
$\ell_{\cal D}{\mid_{(X,0)}}\colon(X,0)\to(\C^2,0)$ is finite.
The {\bf polar curve} \index{polar curve}
$\Pi_{\cal D}$ of $(X,0)$ for the direction $\cal D$ is the closure in
$(X,0)$ of the critical locus of the restriction of $\ell_{\cal D} $
to $X \setminus \{0\}$. The {\bf discriminant curve} \index{discriminant curve} $\Delta_{\cal D}
\subset (\C^2,0)$ is the image $\ell_{\cal D}(\Pi_{\cal D})$ of the polar
curve $\Pi_{\cal D}$.

\begin{proposition}[{\cite[Lemme-cl\'e V 1.2.2]{Teissier1982}}]\label{prop:generic} An open
dense subset $\Omega \subset {\mathbf G}(n-2,\C^n)$ exists 
such that: 
\begin{enumerate}
\item \label{cond:generic1} The family of  curve germs  $(\Pi_{\cal D})_{{\cal D} \in \Omega}$
is equisingular in terms of strong simultaneous resolution;
\item \label{cond:generic2} The curves  $\ell_{\cal D}(\Pi_{\cal D'})$, $({\cal D}, {\cal D}') \in \Omega \times \Omega$ form an equisingular family of reduced plane curves;
\item \label{cond:generic3}  For each $\cal D$, the projection $\ell_{\cal D}$ is generic for its polar curve $\Pi_{\cal D}$ (Definition \ref{def:generic projection curve}). 
\end{enumerate}
  \end{proposition}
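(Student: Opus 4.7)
The plan is to place all polar curves into a single algebraic family over the Grassmannian and then apply generic equisingularity theorems. Let $\nu\colon \tilde X\to X$ be the Nash modification, realized as the closure in $X\times \mathbf{G}(2,\C^n)$ of the graph of the Gauss map $x\mapsto T_xX$ on $X\setminus\{0\}$. For each $\mathcal D\in\mathbf{G}(n-2,\C^n)$, the polar curve $\Pi_{\mathcal D}$ is the image under $\nu$ of the locus $\{(x,T)\in\tilde X : T\cap \mathcal D\neq \{0\}\}$, which is cut out by determinantal equations depending algebraically on $\mathcal D$. Thus $\{\Pi_{\mathcal D}\}$ is an algebraic family over $\mathbf{G}(n-2,\C^n)$, and composing with $\ell_{\mathcal D'}$ yields an algebraic family of plane curves parametrized by pairs $(\mathcal D,\mathcal D')$.

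For condition (1), I would invoke generic equisingularity of the family $\{\Pi_{\mathcal D}\}$ in the sense of strong simultaneous resolution. The strategy is to construct an embedded resolution of the total space by iteratively blowing up the stratification produced by the singular loci, then apply generic flatness together with Thom's Whitney stratification theorem: on a Zariski-open $\Omega_1\subset\mathbf{G}(n-2,\C^n)$ the analytic invariants of $\Pi_{\mathcal D}$ (multiplicity, Puiseux pairs of each branch, pairwise contact exponents) are constant, giving strong simultaneous resolution. For condition (2), the same reasoning applied to the family of plane curves $\ell_{\mathcal D}(\Pi_{\mathcal D'})$ --- combined with Zariski's theorem that equisingularity of a family of reduced plane curves is characterized by constancy of the embedded topological type of each fiber --- yields a Zariski-open $\Omega_2\subset \Omega_1\times \Omega_1$ of good pairs. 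A standard general-position argument then extracts a single open $\Omega\subset\Omega_1$ such that $\Omega\times \Omega\subset\Omega_2$.

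Condition (3) requires a separate genericity check. By (1) the tangent cone $T_0\Pi_{\mathcal D}$ is a finite union of complex lines whose directions depend analytically on $\mathcal D\in\Omega_1$. The condition that $\mathcal D$ contain none of these lines is the complement of a proper Zariski-closed set in $\Omega_1$, since for each branch the ``bad'' set is cut out by the vanishing of a Pl\"ucker minor in the coordinates of $\mathcal D$; intersecting with $\Omega$ from the previous step finishes the proof.

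The main obstacle is achieving strong simultaneous resolution in (1): this is considerably stronger than topological equisingularity, and one must rule out pathologies of the Brian\c{c}on--Speder type in which $\mu$ is constant but resolutions do not fit into a simultaneous family (compare the example in Figure~\ref{fig:12}). The standard device is Teissier's polar multiplicities $m_i^*$ and his theorem that their constancy along a family implies strong simultaneous resolution; in the present setting one applies it after noticing that the local analytic type of $\Pi_{\mathcal D}$ is controlled by polar invariants of $X$ that vary upper semi-continuously in $\mathcal D$ and are therefore generically constant.
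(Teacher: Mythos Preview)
The paper does not give its own proof of this proposition: it is stated with the citation \cite[Lemme-cl\'e V 1.2.2]{Teissier1982} and used as a black box. There is therefore nothing to compare your argument against in the present text; the content you are sketching is Teissier's, and the correct reference for the details is his paper.

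That said, your outline is broadly the right shape (assemble the $\Pi_{\cal D}$ into an algebraic family over the Grassmannian and invoke generic equisingularity), but two points deserve care. First, the step ``a standard general-position argument then extracts a single open $\Omega\subset\Omega_1$ such that $\Omega\times\Omega\subset\Omega_2$'' is not standard: a Zariski-open subset of a product need not contain a product of Zariski-opens (take the complement of the diagonal in $\mathbf A^1\times\mathbf A^1$). In Teissier's situation one uses instead that the equisingularity stratum is defined by conditions that are separately open in each factor, together with the transversality of the general projection to the polar, so that the bad locus genuinely has proper image under both projections; this has to be argued, not asserted. Second, your last paragraph is aimed at the wrong target. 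Brian\c con--Speder type phenomena concern \emph{surfaces}; for a family of reduced curve germs, constancy of the combinatorial data (multiplicities along the resolution, or equivalently the Puiseux pairs and contacts) already yields strong simultaneous resolution, so invoking the $m_i^*$ machinery is unnecessary and somewhat circular here. If you want to fill in condition~(1) cleanly, the right ingredients are semicontinuity of the multiplicity sequence in the successive blow-ups plus irreducibility of the parameter space, not Whitney stratifications or polar multiplicities.
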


\begin{definition}  \label{def:generic linear projection} 
The projection $\ell_{\cal D} \colon \C^n \to \C^2$
  is {\bf generic} \index{generic!projection} for $(X,0)$ if ${\cal D} \in \Omega$.
\end{definition}

\subsubsection{Nash modification}
 
 \begin{definition} \label{def:Nash modification}
Let $\lambda \colon X\setminus\{0\} \to {\mathbf G}(2,\C^n)$ be the
  map which sends $x \in X\setminus\{0\}$ to the tangent plane
  $T_xX$. The closure $X_{\nu}$ of the graph of $\lambda$ in $X
  \times {\mathbf G}(2,\C^n)$ is a reduced analytic surface. By
  definition, the {\bf Nash modification} \index{Nash modification} of $(X,0)$ is the 
  morphism $\nu  \colon X_{\nu} \to X$ induced by projection on the first factor. 
\end{definition}
 
 \begin{lemma}[{\cite[Part III,  Theorem
    1.2]{Spivakovsky1990}}]\label{le:nash}
A morphism $\pi \colon Y \to X$ factors through Nash modification if and
  only if it has no base points for the family of polar curves of generic projections, i.e., there is no point $p \in \pi^{-1}(0)$ such that for every  ${\cal D} \in \Omega$, the strict transform of $\Pi_{\cal D}$  by $\pi$   passes through $p$.
  \end{lemma}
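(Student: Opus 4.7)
The plan is to exploit the description of the polar curve via the Gauss map. A point $x\in X\setminus\{0\}$ lies on $\Pi_{\cal D}$ precisely when $T_xX\cap {\cal D}\neq\{0\}$, that is, when $\lambda(x)$ belongs to the Schubert divisor $H_{\cal D}\subset \mathbf{G}(2,\C^n)$ consisting of $2$-planes meeting $\cal D$. Hence $\Pi_{\cal D}\setminus\{0\}=\lambda^{-1}(H_{\cal D})$. A key linear-algebra observation is that for any fixed plane $P\in\mathbf{G}(2,\C^n)$, the set of $\cal D\in\mathbf{G}(n-2,\C^n)$ with $P\cap{\cal D}\neq\{0\}$ is a proper Schubert subvariety; since $\Omega$ is dense, it follows that $\bigcap_{{\cal D}\in\Omega}H_{\cal D}=\emptyset$ in $\mathbf{G}(2,\C^n)$.

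For the direct implication, suppose $\pi$ factors as $\pi=\nu\circ\tilde\pi$. Composing $\tilde\pi$ with the second projection of $X_\nu\subset X\times\mathbf{G}(2,\C^n)$ produces a morphism $\tilde\lambda\colon Y\to\mathbf{G}(2,\C^n)$ extending $\lambda\circ\pi|_{Y\setminus\pi^{-1}(0)}$. For any ${\cal D}\in\Omega$, the strict transform of $\Pi_{\cal D}$ by $\pi$ is contained in $\tilde\lambda^{-1}(H_{\cal D})$. If $p\in\pi^{-1}(0)$ were a base point of the polar family, then $\tilde\lambda(p)$ would lie in $H_{\cal D}$ for every ${\cal D}\in\Omega$, contradicting the empty intersection established above.

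For the converse, the goal is to extend $\lambda\circ\pi$ from $Y\setminus\pi^{-1}(0)$ to a morphism $\tilde\lambda\colon Y\to\mathbf{G}(2,\C^n)$; coupled with $\pi$, this produces the required lift $\tilde\pi\colon Y\to X_\nu$ by the universal property of the graph-closure defining Nash modification. I would use the Plücker embedding $\mathbf{G}(2,\C^n)\hookrightarrow\mathbb{P}^N$, so that the Gauss map is encoded by local Plücker coordinates, i.e.\ by sections of a line bundle $L$ on $X\setminus\{0\}$. The sections $s_{\cal D}$ cutting out the Schubert hyperplanes $H_{\cal D}$ (for ${\cal D}\in\Omega$) pull back to meromorphic sections of $\pi^*L$ on $Y$, with vanishing loci the strict transforms of $\Pi_{\cal D}$. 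Because the family $\{H_{\cal D}\}_{{\cal D}\in\Omega}$ spans the full space of hyperplane sections of $\mathbf{G}(2,\C^n)\subset\mathbb{P}^N$, the common zero locus of the $s_{\cal D}$ along $\pi^{-1}(0)$ is exactly the set of base points in the statement. Emptiness of this common locus then lets one apply the standard criterion that a rational map to projective space extends at a point where all sections of its linear system are not simultaneously zero: the Plücker map extends to a morphism $Y\to\mathbb{P}^N$ whose image lies in the closed subvariety $\mathbf{G}(2,\C^n)$ by continuity, yielding $\tilde\lambda$.

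The main obstacle is the converse, and within it the comparison between two notions of base locus: the geometric one (common points of the strict transforms $\Pi_{\cal D}^*$ over all generic $\cal D$) and the algebraic one (common zeros in $\pi^{-1}(0)$ of the pulled-back Plücker sections). Reconciling them uses the equisingularity of the family $(\Pi_{\cal D})_{{\cal D}\in\Omega}$ guaranteed by Proposition \ref{prop:generic}(\ref{cond:generic1}), which ensures that the strict transforms do not acquire additional common components in $\pi^{-1}(0)$, together with the density of $\Omega$, which implies that \emph{all} Plücker hyperplanes arise (as limits) from the family. Once this identification is in place, the extension of the rational map to $\mathbf{G}(2,\C^n)$ is standard and the universal property of the graph closure completes the factorization.
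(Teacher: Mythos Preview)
The paper does not prove this lemma; it is simply quoted from \cite{Spivakovsky1990} without argument, so there is no in-paper proof to compare your proposal against. Your approach is the standard one and is in the spirit of Spivakovsky's original argument: identify the Nash modification as the resolution of indeterminacy of the Gauss map $\lambda\colon X\setminus\{0\}\to\mathbf{G}(2,\C^n)$, recognize the polar curves $\Pi_{\cal D}$ as pullbacks of Schubert hyperplanes $H_{\cal D}$ under $\lambda$, and use the fact that these hyperplanes span the full Pl\"ucker linear system to translate base-point-freeness of the polar family into extendability of $\lambda\circ\pi$.

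Your direct implication is clean. For the converse, the subtlety you flag is real but less delicate than you suggest. The point is local: near $p\in\pi^{-1}(0)$ the rational map $\lambda\circ\pi$ to $\mathbb{P}^N$ is given by Pl\"ucker coordinates $(\phi_0,\dots,\phi_N)$, holomorphic on $Y$. After dividing by their greatest common factor in $\mathcal{O}_{Y,p}$, the map extends at $p$ iff the reduced tuple has no common zero at $p$. Since the $H_{\cal D}$ span the Pl\"ucker linear system (rank-$2$ skew forms span $\wedge^2(\C^n)^*$), and $\Omega$ is Zariski-dense, this residual common zero locus is exactly the set of $p$ lying on every strict transform $\Pi_{\cal D}^*$. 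The equisingularity from Proposition~\ref{prop:generic} is not really needed here; density of $\Omega$ and the spanning property already do the work. One caveat worth making explicit: the extension argument assumes $\mathcal{O}_{Y,p}$ is a UFD (e.g.\ $Y$ smooth) or at least that $Y$ is normal, so that the gcd step and the codimension-$2$ extension of the rational map go through; this is implicit in the resolution context but should be stated.
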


\begin{definition} \label{def:Gauss} Let $(X,0) \subset (\C^n,0)$ be a
  complex surface germ and let ${\nu } \colon X_{\nu} \to X$ be the Nash
  modification of $X$. The
  {\bf Gauss map} \index{Gauss map} $\widetilde{\lambda} \colon X_{\nu} \to {\mathbf G}(2,\C^n) $ is  the restriction to $X_{\nu}$ of the projection of $X  \times {\mathbf G}(2,\C^n)$ on the second factor. 
\end{definition}

 Let  $\ell \colon\C^n\to \C^2$ be a linear projection such that the restriction $\ell |_X \colon (X,0) \to (\C^2,0)$ is generic. 
Let $\Pi$ and $\Delta$ be the polar and discriminant curves of $\ell|_X$.
 
 \begin{definition}   \label{local bilipschitz constant}  The {\bf local bilipschitz constant of $\ell|_X$} 
 is the map
$K\colon X\setminus \{0\}\to \R\cup\{\infty\}$ defined as follows. It is
infinite on the polar curve $\Pi$ and at a point $p\in X\setminus \Pi$ it is
the reciprocal of the shortest length among images of unit vectors in
$T_{p} X$ under the projection $\ell \mid_{T_{p} X} \colon T_{p} X\to \C^2$.
\end{definition}
 
    Let  $\Pi^*$ denote  the
  strict transform of the polar curve $\Pi$ by the Nash modification $\nu $. Set $B_{\epsilon}  = \{ x \in \C^n \colon \lVert x \rVert _{\C^n} \leq \epsilon\}$. 
 
 \begin{lemma} \label{lem:local bil bound1} Given any neighbourhood $U$ of\/ ${\Pi}^*\cap
  \nu^{-1}(B_\epsilon\cap X)$ in ${X_{\nu}}\cap
  \nu ^{-1}(B_\epsilon\cap X)$, the local bilipschitz constant $K$ of $\ell \mid_X$
  is bounded on $B_\epsilon \cap (X\setminus \nu (U))$.
\end{lemma}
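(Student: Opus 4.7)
The plan is to extend $K$ by continuity to a well-behaved function on the Nash modification $X_\nu$ and then conclude by a compactness argument.

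For any $2$-plane $P\in \mathbf{G}(2,\C^n)$, let $\sigma(P)$ denote the smallest value of $\lVert \ell(v) \rVert$ over unit vectors $v\in P$. Then $\sigma$ is a continuous nonnegative function on the Grassmannian vanishing precisely on the Schubert-type subvariety $\Sigma_\ell := \{P : P\cap \ker \ell \neq \{0\}\}$, so $1/\sigma$ is a continuous map to $(0,\infty]$. Composing with the Gauss map $\widetilde\lambda\colon X_\nu \to \mathbf{G}(2,\C^n)$ of Definition \ref{def:Gauss} produces a continuous function $\widetilde K := 1/(\sigma\circ \widetilde\lambda)\colon X_\nu \to (0,\infty]$. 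Since $\nu$ is an isomorphism over the smooth locus of $X$, $\widetilde K$ restricts to $K$ there; in particular $\widetilde K = \infty$ on the dense subset $\Pi\setminus\{0\}$ of $\Pi^*$, and closedness of $\widetilde K^{-1}(\infty)$ forces $\Pi^*\subseteq \widetilde K^{-1}(\infty)$.

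The key step, and the main obstacle, is the reverse inclusion $\widetilde K^{-1}(\infty)\cap \nu^{-1}(B_\epsilon\cap X) \subseteq \Pi^*$. Over $X\setminus\{0\}$ this is simply the characterization of $\Pi$ as the critical locus of $\ell|_X$. Over the exceptional fibre $\nu^{-1}(0)$, a point $q$ lies in $\widetilde K^{-1}(\infty)$ exactly when the limit tangent plane $\widetilde\lambda(q)$ meets $\ker \ell$ nontrivially, and one has to argue that for $\ell$ generic in the sense of Definition \ref{def:generic linear projection} the only such limit planes are those arising from arcs of the polar curve, i.e.\ the plane $\widetilde\lambda(q)$ for $q\in \Pi^*\cap \nu^{-1}(0)$. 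This uses the genericity statement in Proposition \ref{prop:generic}, together with the characterization of the Nash modification in Lemma \ref{le:nash}: a generic $\ker \ell$ avoids any limit tangent plane that is not captured by $\Pi^*$.

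Once this identification is in hand the rest is routine. The fibre product $\nu^{-1}(B_\epsilon\cap X)$ is compact since $\nu$ is a proper modification and $B_\epsilon\cap X$ is compact. The hypothesis that $U$ contains $\Pi^*\cap \nu^{-1}(B_\epsilon\cap X)$ therefore means, by the identity just established, that $\widetilde K$ is finite-valued on the compact set $C:=\nu^{-1}(B_\epsilon\cap X)\setminus U$, so continuity yields a finite maximum $K_0$ of $\widetilde K$ on $C$. Finally, any $p\in B_\epsilon\cap (X\setminus \nu(U))$ lifts uniquely through the isomorphism $\nu\colon X_\nu\setminus \nu^{-1}(0)\to X\setminus\{0\}$ to a point $q\in C$ with $K(p)=\widetilde K(q)\le K_0$, giving the required uniform bound.
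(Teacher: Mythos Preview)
Your argument is essentially the paper's: lift $K$ to a continuous $(0,\infty]$-valued function on $X_\nu$ by composing the Gauss map $\widetilde\lambda$ with the bilipschitz-constant function on the Grassmannian, then use compactness of $\nu^{-1}(B_\epsilon\cap X)\setminus U$. You are more explicit than the paper about why the lifted function is finite off $\Pi^*$; note however that Lemma~\ref{le:nash} is not quite the right citation for this---what is actually needed is that for generic $\mathcal D$ no irreducible component of $\nu^{-1}(0)$ has Gauss image contained in the Schubert variety $\Sigma_\ell$, which is a direct genericity count rather than a statement about base points of the polar family.
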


\begin{proof}  
  Let $\kappa \colon
  {\mathbf G}(2,\C^n) \to \R \cup \{\infty\}$ be the map sending $H \in
  {\mathbf G}(2,\C^n)$ to the bilipschitz constant of the restriction
  $\ell|_{H}\colon H \to \C^2$.  The map $\kappa \circ \widetilde{\lambda}$ coincides with $K \circ {\nu }$ on $X_{\nu}  \setminus
  {\nu }^{-1}(0)$ and takes finite values outside ${\Pi}^*$. The map
  $\kappa  \circ \widetilde{\lambda}$ is continuous and therefore bounded on the
  compact set ${\nu }^{-1}(B_{\epsilon} \cap X) \setminus U$.
  \end{proof}

We will use ``small" special  versions of $U$ called polar wedges, defined as follows.

\begin{definition} \label{def:polar wedge} \index{polar wedge}  Let  $\Pi_0$ be a component of $\Pi$ and let $(u,v)$ be local coordinates in $X_{\nu}$ centered at $p =  \Pi_0^*\cap {\nu }^{-1}(0)$ such that $v=0$ is the local equation of ${\nu }^{-1}(0)$. For $\alpha >0$, consider the polydisc   $U_{\Pi_0}(\alpha)=\{(u,v) \in \C^2 \colon |u|\leq \alpha\}$.
 For small $\alpha$, the set $W_{\Pi_0}  = \nu(U_{\Pi_0}(\alpha))$  is called a {\bf polar wedge} around $\Pi_0$ and the union $W = \bigcup_{\Pi_0 \subset \Pi}  W_{\Pi_0}$ a {\bf polar wedge} around $\Pi$. 
\end{definition}

\subsubsection{Application 1}
\begin{proof}[of Lemma \ref{lem:first part}] We want to prove that for every $\cal L$-node $\nu$,  the germ   $\pi({{\mathscr{N}}}(\nu))$ is metrically conical. 

Consider a polar wedge  $W$ around $\Pi$.   A direct  consequence of Lemma \ref{lem:local bil bound1} is that the restriction of $\ell$ to  $X \setminus W$  is a local bilipschitz homeomorphism for the inner metric. Therefore, for any  metrically conical germ $C$ in $(\C^2,0)$, the intersection of the   lifting  $\ell^{-1}(C)$ with  $\overline{X \setminus W}$  will be a metrically  conical germ.

For each $j=1,\ldots,s$, let $L_j \subset \C^n$ be the complex tangent line of the thin  germ $(Z_j,0)$ and let $L'_j \subset \C^2$ be image of $L_j$ by the generic linear form $\ell \colon  \C^n \to \C^2$. Assume $L'_j$ has equation $y=a_j x$. For a  real number $\alpha >0$, we consider the conical subset  $V_{\alpha}  \subset \C^2$  defined  as the union of the complex  lines $y=\eta x$ such that $|\eta -a_j| \geq \alpha$, so $V_{\alpha}$ is the closure of a set obtained by removing conical neighbourhoods of the lines $L'_j$.
Applying the above result, we obtain that for all $\alpha>0$,  the  intersection of the  lifting  $\ell^{-1}(V_{\alpha})$ with  $\overline{X \setminus W}$  gives  a metrically  conical germ at $0$. Since there exist two  real numbers $\alpha_1, \alpha_2 $ with $0<\alpha_1 < \alpha_2$ such that inside a  small ball $B_{\epsilon}$, we have  $\ell^{-1}(V_{\alpha_1}) \subset  \pi({{\mathscr{N}}}(\nu)) \subset   \ell^{-1}(V_{\alpha_2})$, then the germ 
$\pi({{\mathscr{N}}}(\nu)) \cap (\overline{X \setminus W})$ at $0$ is also metrically conical.

 If the strict transform of $\Pi$ by $\pi$ does not intersect the $\cal L$-curve $E_{\nu}$, then the intersection $\pi({{\mathscr{N}}}(\nu)) \cap (\overline{X \setminus W})$ is the whole $\pi({{\mathscr{N}}}(\nu))$. Therefore $\pi({{\mathscr{N}}}(\nu))$ is  metrically conical. 
 
 If the  strict transform of $\Pi$ by $\pi$ intersects the $\cal L$-curve $E_{\nu}$, then we have to use a second generic projection $\ell' \colon (X,0) \to (\C^2,0)$ such that the strict transform of the polar curve $\Pi'$ of $\ell'$ by $\pi$ does not intersect  $U$, and we prove that  $ \overline{\pi({{\mathscr{N}}}(\nu)) \cap W}$ is metrically conical using the above argument. Therefore $\pi({{\mathscr{N}}}(\nu))$ is metrically conical as the union of two metrically conical sets. 
\end{proof}

\subsubsection{Application 2.} 
 
 Let $(X,0)$ be a normal complex surface singularity. 
\begin{definition} \label{def:inner rate} Let $S_{\epsilon} = \{ x \in \C^n \colon \lVert x \rVert_{\C^n} = \epsilon\}$. Let $(\gamma,0)$ and $(\gamma',0)$ be two  distinct irreducible germs   of complex curves inside $(X,0)$. Let  $q_{inn}=q_{inn}(\gamma, \gamma')$ be the   rational number $\geq 1$ defined by 
$$ d_i(\gamma \cap S_{\epsilon}, \gamma' \cap S_{\epsilon}) =  \Theta(\epsilon^{q_{inn}}),$$
where $d_{i}$ means inner distance in $(X,0)$ as before. 

 We call  $q _{inn}(\gamma, \gamma')$ the  {\it  the inner  contact exponent}  or  {\it  inner  contact order} \index{contact order!inner} between $\gamma$ and $\gamma'$.
\end{definition}

 The proof of the existence and rationality of the inner contact $q_{inn}$ needs deep arguments of \cite{KurdykaOrro1997}. We refer to this paper for details. 
 
 \begin{remark}
One can also define the outer contact  exponent $q_{out}$   \index{contact order!outer} between two curves by using the outer metric instead of the inner one. In that case, the existence and rationality of $q_{out}$ come easily from the fact that the outer distance $d_o$ is a semialgebraic function.   (While  the inner distance $d_i$ is not semi-algebraic.)
\end{remark}
   
   \begin{definition} \label{dfn:curvette} Let $\pi \colon Z \to X$ be a resolution of $X$ and let  $E$ be an irreducible component of the exceptional divisor $\pi^{-1}(0)$.  A {\it curvette}  
   of $E$ is a smooth curve $\delta \subset Z$ which is transversal to $E$ at a smooth point of the exceptional divisor $\pi^{-1}(0)$.   
   \end{definition}
   
\begin{lemma} \label{rk:inner rate}  \cite[Lemma 15.1]{NeumannPedersenPichon2019-1} Let $\pi \colon Z \to X$ be a resolution of $(X,0)$ and let  $E$ be an irreducible component of the exceptional divisor $\pi^{-1}(0)$. Let $(\gamma,0)$ and $(\gamma',0)$ be  the $\pi$-images of two  curvettes of $E$ meeting $E$ at two distinct points.  Then  $q_{inn}(\gamma, \gamma')$  is independent of the choice of $\gamma$ and $\gamma'$.
  \end{lemma}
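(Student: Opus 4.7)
The plan is to reduce the computation of $q_{inn}(\gamma,\gamma')$ to an outer contact exponent of plane curves in $(\C^2,0)$ via a generic linear projection, and then to identify this exponent as a combinatorial invariant of the component $E$. First, by Lemma \ref{le:nash}, I may refine $\pi$ so that it factors through the Nash modification of $(X,0)$; the refinement consists of blowing up points, so $E$ persists as an irreducible component of the exceptional divisor, and the curvettes of the original $E$ correspond (after further blow-ups if necessary) to curvettes of $E$ in the refined resolution with the same image in $(X,0)$. Let $\ell\colon(X,0)\to(\C^2,0)$ be a generic linear projection (Definition \ref{def:generic linear projection}) with polar curve $\Pi$, and let $P\subset E$ be the finite set obtained by intersecting $E$ with the other exceptional components of $\pi^{-1}(0)$ together with the points where the strict transform $\Pi^*$ meets $E$; the finiteness of the second type of points is guaranteed by $\pi$ factoring through the Nash modification.

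I treat first the case where the two curvettes meet $E$ at two distinct points $p,p'\in E\setminus P$; the general case then follows by a limiting argument since $P$ is finite. Set $\gamma=\pi(\delta)$, $\gamma'=\pi(\delta')$, and fix a polar wedge $W$ around $\Pi$ (Definition \ref{def:polar wedge}) narrow enough that $\gamma,\gamma'$ lie in $X\setminus W$. Lemma \ref{lem:local bil bound1}, applied with $U$ equal to the defining neighborhood of $W$, yields that the local bilipschitz constant of $\ell$ is bounded on $\overline{X\setminus W}\cap B_\epsilon$, so $\ell$ is a local inner-bilipschitz homeomorphism onto its image there. Combining this with the Polar Wedge Lemma of \cite{BirbrairNeumannPichon2014}, which asserts that $W$ has a controlled metric model and provides no asymptotic shortcut across the polar curve, I obtain that the inner distance $d_i(\gamma\cap S_\epsilon,\gamma'\cap S_\epsilon)$ in $X$ has the same asymptotic order as the Euclidean distance in $\C^2$ between $\ell(\gamma)\cap\ell(S_\epsilon)$ and $\ell(\gamma')\cap\ell(S_\epsilon)$. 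Hence $q_{inn}(\gamma,\gamma') = q_{out}(\ell(\gamma),\ell(\gamma'))$.

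To conclude, I must show that $q_{out}(\ell(\gamma),\ell(\gamma'))$ depends only on $E$. As $p$ varies in $E\setminus P$, the images $\ell(\pi(\delta_p))$ form an equisingular family of irreducible plane curve germs (by Proposition \ref{prop:generic}), all carrying the same sequence of characteristic Puiseux exponents; this sequence is determined by the chain of blow-ups in $\pi$ leading from $E$ back to the initial blow-up of the origin, that is, by the position of $E$ in the resolution graph. For two curvettes meeting $E$ at distinct points $p,p'$, the plane curves $\ell(\gamma)$ and $\ell(\gamma')$ share every Puiseux datum contributed by ancestors of $E$ and first diverge at the exponent associated to $E$ itself. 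Therefore their coincidence exponent $q_{out}(\ell(\gamma),\ell(\gamma'))$ equals this numerical invariant of $E$, independent of $\delta$ and $\delta'$.

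The main obstacle is the identification $q_{inn}=q_{out}$: the polar wedge $W$ is precisely the region where $\ell$ fails to be bilipschitz, so an inner shortest path in $X$ between $\gamma\cap S_\epsilon$ and $\gamma'\cap S_\epsilon$ could a priori take a shortcut through $W$ that $\ell$ does not see. Ruling out such shortcuts is the content of the Polar Wedge Lemma, which supplies an explicit metric description of $W$ as a standard piece containing no asymptotic short path across $\Pi$; this is the deepest input of the proof, and once it is granted the remainder of the argument is essentially a translation of the classical Puiseux combinatorics of surface resolutions into a statement about the inner metric.
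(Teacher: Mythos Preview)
Your strategy is the same as the paper's --- project to $(\C^2,0)$ by a generic $\ell$, invoke Lemma \ref{lem:local bil bound1} to transport the inner contact to a plane-curve coincidence exponent, and read off the latter as a combinatorial invariant of $E$ --- but several details are off and the paper's execution is tidier.

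First, the paper avoids your Nash refinement, your restriction to $E\setminus P$, and your ``limiting argument'' in one stroke: given the curvettes, it picks $\ell$ generic \emph{for the curve germ} $\gamma\cup\gamma'$ (Definition \ref{def:generic projection curve}), not just generic for $(X,0)$. This forces the strict transforms of $\gamma,\gamma'$ to avoid $\Pi^*$ automatically, so any sufficiently small polar wedge misses them and Lemma \ref{lem:local bil bound1} applies without further case analysis. Your limiting argument for curvettes meeting $\Pi^*\cap E$ is not justified as stated (there is no continuity statement for $q_{inn}$ to appeal to), and it is simply unnecessary once $\ell$ is allowed to depend on the curvettes. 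For the independence of the plane-curve exponent, the paper argues that the minimal sequence of blow-ups $\sigma$ separating $\ell(\gamma)^*$ from $\ell(\gamma')^*$ is determined by $E$ alone via Hirzebruch-Jung resolution, and the exponent is the inner rate $q_C$ of the last exceptional curve created. Your appeal to Proposition \ref{prop:generic} here is a misattribution: that proposition is about the equisingularity of the family of polar curves $(\Pi_{\cal D})_{\cal D\in\Omega}$, not about images of curvettes.

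Finally, your discussion of the ``main obstacle'' has the direction reversed. Since $\ell$ is $1$-Lipschitz, every path in $X$ projects to a path in $\C^2$ of at most the same length, so $d_o^{\C^2}(\ell(p),\ell(q))\le d_i^X(p,q)$ unconditionally; a shortcut through $W$ could only make $d_i^X$ \emph{smaller}, which is harmless. The nontrivial inequality is the reverse one, and it is obtained by lifting a short path in $\C^2$ that stays where the local bilipschitz constant is bounded --- this is exactly what Lemma \ref{lem:local bil bound1} supplies, and the paper's proof invokes only that lemma, not the Polar Wedge Lemma.
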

  
   \begin{definition}  \label{def:inner rate2} We set $q_E = q_{inn}(\gamma, \gamma')$ and we call $q_E$ the {\it  inner rate} \index{inner rate} of $E$. 
 \end{definition}

\begin{remark}  \label{rk:inner rates} When $X=\C^2$, inner and outer metrics coincide and  the result  is well known and comes from classical plane curve theory: in that case, $q_{inn}(\gamma, \gamma')$ is the coincidence exponent between Puiseux expansions of the curves $\gamma$  and $\gamma'$ { (see for example \cite[page 401]{GarciaBarrosoTeissier1999}).}  The inner rate at each vertex of a  sequence of blow-ups can be computed by using the  classical dictionary between characteristic exponents of an irreducible curve and its resolution graph. We refer to  \cite[page 148]{EisenbudNeumann1985} or  \cite[Section 8.3]{Wall2004} for details. {   As  a consequence of this,  the inner rates along any path from the root vertex to a leaf of $T$ form a strictly increasing sequence.} 
 \end{remark}
 
 \begin{example} \label{example:inner rates 1}   The  dual tree $T_0$ of the minimal  resolution $\sigma_0 \colon Y_0 \to \C^2$ of the curve $ \gamma$ with Puiseux expansion $y = z^{5/3}$ is obtained (Figure \ref{fig:13}) by computing the continued fraction development  
 $$\frac{5}{3} =  1+ \frac{1}{1+\frac{1}{2}} =: [1,1,2]^+.$$ 
 Since $1+1+2 = 4$, $\sigma_0$ consists of four successive  blow-ups  of points starting with the blow-up of the origin of $\C^2$ which  
correspond to  the four vertices of $T_0$. The irreducible curves $E_{1}, \ldots, E_4$ are  labelled in their order of appearance and the vertices of $T_0$ are  also weighted by their self-intersections $E_i^2$. 

 \begin{figure}[ht] 
\centering
 \begin{tikzpicture}

   \draw[thin ](-2,0)--(1,0);
  \draw[fill ] (-2,0)circle(2pt);
   \draw[fill ] (-1,0)circle(2pt);
    \draw[fill ] (0,0)circle(2pt);
     \draw[fill ] (1,0)circle(2pt);
     
\draw[thin,>-stealth,->](-1,0)--+(-0.7,0.7);

\node(a)at(-2,0.4){\scriptsize{$E_2$ }};
\node(a)at(0,0.4){ \scriptsize{$E_3$}};
\node(a)at(-1,0.4){\scriptsize{$E_4$}};
\node(a)at(1,0.4){ \scriptsize{$E_1$}};
\node(a)at(2.2,0){root vertex};

\node(a)at(-2,-0.4){$-3$};
\node(a)at(-1,-0.4){$-1$};
\node(a)at(0,-0.4){$-2$};
\node(a)at(1,-0.4){$-3$};

\end{tikzpicture} 
 \caption{The resolution tree $T_0$  of the curve $x^5+z^{15}+y^7z+txy^6=0$ }\label{fig:13}
\end{figure}
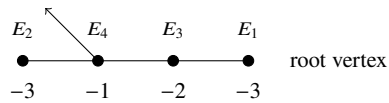
 
The inner rates   are computed by using the   approximation numbers associated with the sequence  $[1,1,2]^+$: $q_{v_1}=[1]^+ = 1$, $q_{v_2}=[1,1]^+ = 1+\frac{1}{1} = 2$, $q_{v_3}=[1,1,1]^+  = \frac{3}{2}$ and $q_{v_3}=[1,1,2]^+   = \frac{5}{3}$

 This gives the   tree  $T_0$  of Figure \ref{fig:14} where each vertex is weighted by the self intersection of the corresponding exceptional  curve $E_i$ and with the inner rate  $q_{E_i}$ (in bold). 
 
 Let us blow up every intersection point between irreducible components of  the total transform $\sigma_0^{-1}(\gamma)$. The resulting  tree $T$ is that used to compute  the dual resolution graph of $E_8 \colon x^2+y^3-z^5=0 $ by Laufer's method (Appendix \ref{appendix}). Again, the inner rates are in bold. Their computation is left to the reader as an  exercise.

 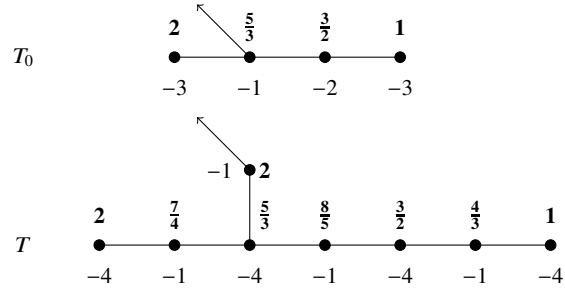
\begin{figure}[ht] 
\centering
 \begin{tikzpicture}
 
   \node(a)at(-4,0){$T_0$};
     \node(a)at(-4,-2.5){$T$};

   \draw[thin ](-2,0)--(1,0);
  \draw[fill ] (-2,0)circle(2pt);
   \draw[fill ] (-1,0)circle(2pt);
    \draw[fill ] (0,0)circle(2pt);
     \draw[fill ] (1,0)circle(2pt);
     
\draw[thin,>-stealth,->](-1,0)--+(-0.7,0.7);

\node(a)at(-2,0.4){ $\mathbf 2$};
\node(a)at(0,0.4){ $ \frac{\mathbf 3}{\mathbf 2}$};
\node(a)at(-1,0.4){$   \frac{\mathbf 5}{\mathbf 3}$};
\node(a)at(1,0.4){ $\mathbf 1$};

\node(a)at(-2,-0.4){$-3$};
\node(a)at(-1,-0.4){$-1$};
\node(a)at(0,-0.4){$-2$};
\node(a)at(1,-0.4){$-3$};

\begin{scope}[xshift=-1cm,yshift=-2.5cm]
  \draw[thin ](-2,0)--(4,0);
      \draw[thin ](0,0)--(0,1);
  \draw[fill ] (-2,0)circle(2pt);
   \draw[fill ] (-1,0)circle(2pt);
    \draw[fill ] (0,0)circle(2pt);
     \draw[fill ] (1,0)circle(2pt);
        \draw[fill ] (2,0)circle(2pt);

   \draw[fill ] (3,0)circle(2pt);

   \draw[fill ] (4,0)circle(2pt);

      \draw[fill ] (0,1)circle(2pt);
\draw[thin,>-stealth,->](0,1)--+(-0.7,0.7);

\node(a)at(4,0.4){ $\mathbf 1$};
\node(a)at(3,0.4){ $\frac{\mathbf 4}{\mathbf 3}$};
\node(a)at(2,0.4){ $ \frac{\mathbf 3}{\mathbf 2}$};
\node(a)at(1,0.4){ $ \frac{\mathbf 8}{\mathbf 5}$};
\node(a)at(0.2,0.4){$   \frac{\mathbf 5}{\mathbf 3}$};
\node(a)at(-1,0.4){$   \frac{\mathbf 7}{\mathbf 4}$};
\node(a)at(-2,0.4){ $\mathbf 2$};
\node(a)at(0.2,1){ $\mathbf 2$};
 
\node(a)at(-2,-0.4){$-4$};
\node(a)at(-1,-0.4){$-1$};
\node(a)at(0,-0.4){$-4$};
\node(a)at(1,-0.4){$-1$};
\node(a)at(2,-0.4){$-4$};
\node(a)at(3,-0.4){$-1$};
\node(a)at(4,-0.4){$-4$};
\node(a)at(-0.4,1){$-1$};

\end{scope}
\end{tikzpicture} 
  \caption{The inner rates in resolutions of the curve $y=x^{5/3}$}\label{fig:14}
\end{figure}

 \end{example}
 
 \begin{proof}[of Lemma \ref{rk:inner rate}] 
  Consider  a generic projection $\ell \colon (X,0) \to (\C^2,0)$  which is also generic for the  curve  germ $(\gamma   \cup   \gamma',0)$ (Definition \ref{def:generic projection curve}). Then consider the minimal  sequence of blow-ups $\sigma \colon Y \to \C^2$  such that the strict transforms  $\ell(\gamma)^*$  and $\ell(\gamma')^*$ by $\sigma$ do not intersect. Then  $\ell(\gamma)^*$  and $\ell(\gamma')^*$  are two curvettes  of the  last exceptional curve  $C$ created by $\sigma$ and we then  have $q_{inn}(\ell(\gamma), \ell(\gamma')) = q_C $. Moreover, an easy argument using Hirzebruch-Jung resolution of surfaces (see \cite{PopescuPampu2011} for an introduction to this resolution method)  shows that $\sigma$ does not depend on the choice of the curvettes $\gamma^*$ and ${\gamma'}^*$ of $E$.  Now, since $\ell$ is generic for  the curve $\gamma \cup \gamma'$, the strict transform of the polar curve $\Pi$ of $\ell$ by $\pi$ does not intersect the strict transform of $\gamma \cup \gamma'$, and then, $\gamma^* \cup {\gamma'}^*$ is outside any sufficiently small  polar wedge of $\ell$ around   $\Pi$. Therefore, by Lemma   \ref{lem:local bil bound1}, we obtain $q_{inn}(\gamma, \gamma') = q_{inn}(\ell(\gamma), \ell(\gamma')) = q_C $  
\end{proof}

\begin{example} \label{example:inner rates 2} The proof of Lemma \ref{rk:inner rate} shows   that the  inner rates $q_E$ can be computed by using inner rates in $\C^2$ through a generic  projection $\ell \colon (X,0) \to (\C^2,0)$. Applying this, Figure \ref{fig:15} shows the inner rate at each vertex of the minimal resolution graph of the surface singularity $E_8  \colon x^2+y^3+z^5=0$.     \index{surface singularity!$E_8$}  They are obtained by lifting the inner rates of the graph $T$ of Example \ref{example:inner rates 1}. 

  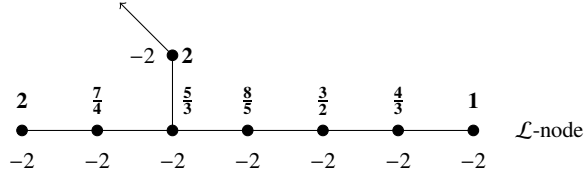
\begin{figure}[ht] 
\centering
 \begin{tikzpicture}

   \draw[thin ](-2,0)--(4,0);
      \draw[thin ](0,0)--(0,1);
  \draw[fill ] (-2,0)circle(2pt);
   \draw[fill ] (-1,0)circle(2pt);
    \draw[fill ] (0,0)circle(2pt);
     \draw[fill ] (1,0)circle(2pt);
        \draw[fill ] (2,0)circle(2pt);

   \draw[fill ] (3,0)circle(2pt);

   \draw[fill ] (4,0)circle(2pt);

      \draw[fill ] (0,1)circle(2pt);
\draw[thin,>-stealth,->](0,1)--+(-0.7,0.7);

\node(a)at(4,0.4){ $\mathbf 1$};
\node(a)at(3,0.4){ $\frac{\mathbf 4}{\mathbf 3}$};
\node(a)at(2,0.4){ $ \frac{\mathbf 3}{\mathbf 2}$};
\node(a)at(1,0.4){ $ \frac{\mathbf 8}{\mathbf 5}$};
\node(a)at(0.2,0.4){$   \frac{\mathbf 5}{\mathbf 3}$};
\node(a)at(-1,0.4){$   \frac{\mathbf 7}{\mathbf 4}$};
\node(a)at(-2,0.4){ $\mathbf 2$};
\node(a)at(0.2,1){ $\mathbf 2$};
\node(a)at(5,0){$\cal L$-node};

\node(a)at(-2,-0.4){$-2$};
\node(a)at(-1,-0.4){$-2$};
\node(a)at(0,-0.4){$-2$};
\node(a)at(1,-0.4){$-2$};
\node(a)at(2,-0.4){$-2$};
\node(a)at(3,-0.4){$-2$};
\node(a)at(4,-0.4){$-2$};
\node(a)at(-0.4,1){$-2$};
\end{tikzpicture} 
  \caption{ The inner rates for the singularity $E_8$}\label{fig:15}
\end{figure}
\end{example}

\subsection{Fast loops in the thin pieces} \label{sec:fast loops2}

Consider a normal surface germ $(X,0) \subset (\C^n,0)$. 
We choose  coordinates $(z_1\dots,z_n)$ in $\C^n$ so that $z_1$ and $z_2$ are generic linear forms and
$\ell:=(z_1,z_2)\colon X\to \C^2$ is a generic linear projection. The family of Milnor balls we use in the sequel consists of standard
``Milnor tubes'' associated with the Milnor-L\^e fibration for the map
$\zeta:=z_1|_X\colon X\to \C$
 Namely, for some sufficiently small
$\epsilon_0$ and some $R>0$ we define for $\epsilon\le\epsilon_0$:
$$B_\epsilon:=\{(z_1,\dots,z_n) \in \C^n \colon |z_1|\le \epsilon,\lVert (z_1,\dots,z_n) \rVert    \le
R\epsilon\}\quad\text{and}\quad S_\epsilon=\partial B_\epsilon.$$

By \cite[Proposition 4.1]{BirbrairNeumannPichon2014}, on can choose  
 $\epsilon_0$ and $R$  so that for $\epsilon\le \epsilon_0$:
\begin{enumerate}
\item\label{it:mb1} $\zeta^{-1}(t)$ intersects the round sphere 
$$S_{R\epsilon}^{2n-1} = \{ (z_1,\dots,z_n) \in \C^n \colon \lVert (z_1,\dots,z_n) \rVert = R \epsilon\}$$ transversely for $|t|\le \epsilon$;
\item\label{it:mb2} the polar curve of the projection
$\ell=(z_1,z_2)$ meets $S_\epsilon$ in the part
$|z_1|=\epsilon$.
\end{enumerate}

 If $(A,0)$ is a semialgebraic germ, we denote by $A^{(\epsilon)} = S_{\epsilon} \cap X$ its link with respect to the Milnor ball $B_{\epsilon}$.

\begin{theorem}\cite[Theorem 1.7]{BirbrairNeumannPichon2014} \label{thm:fastloops} Consider the 
    minimal thick-thin decomposition 
    $$(X,0) = \bigcup_{i=1}^r (Y_i,0) \cup \bigcup_{j=1}^s (Z_j,0)$$ of $(X,0)$.  For $0<\epsilon\le \epsilon_0$ and  for each $j=1,\ldots,s$,  let  $\zeta_j^{(\epsilon)}\colon
    Z^{(\epsilon)}_j\to S^1$ be the restriction to  $Z_j^{(\epsilon)}$ of the generic linear form  $h  = z_1$. Then there exists  $q_j>1$ such that the fibers $\zeta_j^{-1}(t)$ have diameter   $\Theta(\epsilon^{q_j})$.   
\end{theorem}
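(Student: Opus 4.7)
The plan is to identify $q_j$ with the minimum inner rate attained in the Tyurina component $\Gamma_j$ corresponding to $Z_j$, and then to bound the fiber diameter above and below through the generic projection $\ell=(z_1,z_2)$. By the resolution description of Section \ref{sec:thick-thin resolution}, $Z_j = \pi(\mathscr{N}(\Gamma_j))$ for a non-bamboo Tyurina component $\Gamma_j$ of the resolution graph $\Gamma$. No vertex of $\Gamma_j$ is an $\cal L$-node, and by the monotonicity of inner rates along paths emanating from the $\cal L$-nodes (Remark \ref{rk:inner rates}, transferred from the plane setting through a generic projection via Lemma \ref{rk:inner rate}), every $v \in \Gamma_j$ satisfies $q_v > 1$. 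The candidate exponent will be
\[ q_j \;:=\; \min\{\, q_v : v \in \Gamma_j \,\}\;>\;1. \]

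For the lower bound I would pick a vertex $v_0 \in \Gamma_j$ achieving $q_{v_0}=q_j$ together with two distinct curvettes $\delta_1,\delta_2$ of $E_{v_0}$. Their images $\gamma_i = \pi(\delta_i) \subset Z_j$ are irreducible analytic curves satisfying $d_i(\gamma_1 \cap S_\epsilon,\gamma_2 \cap S_\epsilon) = \Theta(\epsilon^{q_j})$ by Definition \ref{def:inner rate2}. Because $z_1$ is generic, $z_1\circ\pi$ is non-constant along each $\delta_i$, so for $t$ with $|t|=\epsilon$ and generic argument the fiber $\zeta^{-1}(t)$ meets each $\gamma_i$ in a point of $Z_j^{(\epsilon)}$. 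These two points lie in the same fiber at inner distance $\Theta(\epsilon^{q_j})$, providing the $\Omega(\epsilon^{q_j})$ lower bound on the fiber diameter.

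For the upper bound I would fix a polar wedge $W$ around $\Pi$ so small that $\ell|_{X\setminus W}$ is inner-bilipschitz onto its image with a uniform constant, as afforded by Lemma \ref{lem:local bil bound1}. Outside $W$ the question descends to $\C^2$: a fiber of $\zeta$ inside $Z_j\setminus W$ maps bilipschitz to the slice $\{z_1=t\}\cap \ell(Z_j\setminus W)$, whose outer diameter is controlled by the Puiseux expansions of the branches swept out by images of curvettes of $\Gamma_j$; since inner rates in $\C^2$ coincide with those in $X$ by Lemma \ref{rk:inner rate}, this slice has outer diameter $O(\epsilon^{q_j})$. Inside $W$, the fiber is confined to a thin neighborhood of a single polar component $\Pi_0$ passing through $Z_j$, and invoking the Polar Wedge Lemma (Proposition \ref{polar wedge lemma}, stated later in the paper) one sees that this wedge is itself a horn with exponent equal to the inner rate $q_v$ of the vertex $v\in\Gamma_j$ through which $\Pi_0^*$ passes, contributing $O(\epsilon^{q_v}) \subseteq O(\epsilon^{q_j})$.

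The principal obstacle is precisely this polar-wedge contribution: outside the wedge everything reduces cleanly to Lemma \ref{lem:local bil bound1} and classical plane-curve Puiseux analysis, but inside the wedge $\ell$ degenerates and one must rely on the finer inner-metric structure of $\Pi$ supplied by the Polar Wedge Lemma. The lower bound and the outside-the-wedge part of the upper bound are by contrast routine consequences of the inner-rate formalism.
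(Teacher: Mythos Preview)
Your proposal is correct and takes essentially the same route as the paper's sketch: both pass to $\C^2$ outside a polar wedge via Lemma~\ref{lem:local bil bound1} and handle the wedge itself with the Polar Wedge Lemma (Proposition~\ref{polar wedge lemma}). The only presentational difference is that the paper works downstairs from the outset---identifying $\ell(Z_j)$ with an explicit region $V_j=\sigma(\mathscr N(T_j))=\{z_2=\lambda z_1^{q_j}:a\le|\lambda|\le b\}$ whose rate $q_j$ is read off as the inner rate of the vertex of the $\C^2$-resolution tree adjacent to the thick subtree $T_0$---whereas you stay upstairs and set $q_j=\min_{v\in\Gamma_j}q_v$; these agree, and your curvette argument for the lower bound makes explicit a step the paper leaves implicit.
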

 
 \begin{proof}[Sketch of proof of Theorem \ref{thm:fastloops}]
 The proof of  Theorem  \ref{thm:fastloops} is based on two keypoints:  Lemma \ref{lem:local bil bound1}, which implies that  $\ell$ is an inner Lipschitz homeomorphism outside a polar wedge $W$, and the so called Polar Wedge Lemma \cite[Proposition 3.4]{BirbrairNeumannPichon2014}  \index{polar wedge!Lemma} which describes the geometry of a polar wedge.  The idea is to use a generic linear projection $\ell =(z_1,z_2) \colon (X,0) \to (\C^2,0)$ and to describe $(Z_j,0)$ as a component of the lifting by $\ell$ of some semi-algebraic germ $(V_j,0)$ in $\C^2$ which has the properties  described in the Theorem, i.e., for small  $\epsilon >0$,  $V_j^{(\epsilon)}$ fibers over $S^1$ with fibers having diameter $\Theta(\epsilon^{q_j})$ for some $q_j>1$.

Consider   a sequence $\sigma \colon Y  \to \C^2$ of blow-ups of points which resolves the base points of the  family of projected polar curves $\ell(\Pi_{\cal D})_{{\cal D} \in \Omega}$ and let $T$ be its dual tree. Notice that the strict transforms of  the curves $\ell(\Pi_{\cal D}), {\cal D} \in \Omega$ form an equisingular family of complex curves, but that these curves are not necessarily smooth, i.e., $\sigma$ is not, in general, a resolution of   $\ell(\Pi_{\cal D})$. 

Denote by $v_1$ the root vertex of $T$, i.e., the vertex corresponding to the exceptional curve created by the  first blow-up  and by $T_0$ the subtree of $T$ consisting of $v_1$ union any adjacent string or bamboo. Then $Z_j$ is a component of $\ell^{-1}(V_j)$ where $V_j = \sigma({\mathscr{N}}(T_j))$ and where $T_j$ is a component of $T \setminus T_0$. Let $v_j$ be the vertex of $T_j$ adjacent to $T_0$. By classical curve theory, $V_j$  is a set of the form 
     $V_j = \{ z_2 = \lambda z_1^{q_j},a \leq  |\lambda| \leq b\}, $ where $q_j$ is the inner rate of the exceptional  curve   represented by the  vertex $v_j$. In particular, the $3$-manifold $V_j^{(\epsilon)} = V_j \cap \{|z_1 | = \epsilon\}$ is fibered over the circle $S^1_{\epsilon}$  by the projection $z_1 \colon V_j^{(\epsilon)}  \to S^1_{\epsilon}$ and the fibers have diameter $\Theta(\epsilon^{q_j})$.

Let $W$ be a polar wedge around $\Pi$. By Lemma \ref{lem:local bil bound1}, we know that  $\ell$ is a  locally  inner bilipschitz homeomorphism  outside $W$. Therefore, the fibers of the restriction  $\zeta_j^{(\epsilon)}\colon  Z^{(\epsilon)}_j \setminus W^{(\epsilon)}\to S^1$ have diameter $\Theta(\epsilon^{q_j})$.   Moreover the Polar Wedge Lemma \cite[Proposition 3.4]{BirbrairNeumannPichon2014}   guarantees that the fibers of the restriction  of $\zeta_j^{(\epsilon)}$  to the link of a component of a polar wedge  inside $(Z_j,0)$ have diameter at  most $\Theta(\epsilon^{q_j})$.  
\end{proof}
 
 In  \cite[Section 7]{BirbrairNeumannPichon2014}, it is proved that each $Z_j^{(\epsilon)}$  contains loops which are essential in $X^{(\epsilon)}$.  As a consequence of  Theorem   \ref{thm:fastloops}, we obtain the existence of families of fast    loops $\gamma_{\epsilon}$ inside each $Z_j^{(\epsilon)}$.

\section{Geometric decompositions of a surface singularity} \label{part 4}

\index{geometric decomposition}

In this part, we explain how to break the thin pieces of the  thick-thin decomposition into standard pieces  which are still invariant by bilipschitz change of the inner metric. The resulting decomposition of $(X,0)$ is what we call the inner geometric decomposition of $(X,0)$.  Then, we will define  the outer geometric decomposition of $(X,0)$, which is a refinement of the inner one, and which is  invariant by bilipschitz change of the outer metric. 

The inner and outer geometric decompositions  will lead to several key results: 
\begin{enumerate}
\item The complete classification of the  inner Lipschitz geometry of a normal surface germ (Theorem \ref{th:classification});
\item A refined geometric decomposition which is an invariant of the outer Lipschitz geometry (Theorem \ref{thm:outer invariant});
\item A list of analytic invariants of the surface which are in fact invariants of the  outer Lipschitz geometry (Theorem \ref{th:invariants from geometry});
\end{enumerate}

\subsection{The standard pieces}

In this section, we introduce the standard pieces of our geometric  decompositions. We refer to \cite[Sections 11
and 13]{BirbrairNeumannPichon2014} for more details.

 The pieces are topologically
  conical, but usually with metrics that make them shrink non-linearly
  towards the cone point.  We will consider these pieces as germs at
  their cone-points, but for the moment, to simplify notation, we
  suppress this.
  
\subsubsection{The $\mathbf B$-pieces} \index{geometric decomposition!$B$-piece of a}

Let us start with a prototype which already appeared earlier in these notes (Example \ref{ex:thin2}).
Choose $q>1$ in $\Q$ and  $0< a < b$
   in $\R$. Let $Z\in \C^2$ be defined as the semi-algebraic set
   $$Z:=\{(x,y)\subset\C^2:y=\lambda x^q, a\le|\lambda|\le b \}. $$
   Then for all $\epsilon >0$, the intersection $Z^{(\epsilon)} = Z \cap \{|x|=\epsilon \}$ is a $3$-manifold (namely a thickened torus) and the restriction of the function $x$ to $Z^{(\epsilon)}$ defines a locally trivial fibration  $x \colon Z^{(\epsilon)}  \to  S^1_{\epsilon}$  whose fibers are annuli with diameter  $\Theta(\epsilon^q)$.
 
 \begin{definition}[\bf$\boldsymbol{B(q)}$-pieces]\label{def:Bq}  
   Let $F$ be a compact oriented $2$-manifold, $\phi\colon F\to F$ an
   orientation preserving diffeomorphism, and $M_\phi$ the mapping
   torus of $\phi$, defined as:
$$M_\phi:=([0,2\pi]\times F)/((2\pi,x)\sim(0,\phi(x)))\,.$$
Given a rational number   $q > 1$ , we will define a metric space
$B(F,\phi,q)$ which is topologically the cone on the mapping torus
$M_\phi$.
 
For each $0\le \theta\le 2\pi$ choose a Riemannian metric $g_\theta$
on $F$, varying smoothly with $\theta$, such that for some small
$\delta>0$:
$$
g_\theta=
\begin{cases}
g_0&\text{ for } \theta\in[0,\delta]\,,\\
\phi^*g_{0}&\text{ for }\theta\in[2\pi-\delta,2\pi]\,.
\end{cases}
$$
Then for any $r\in(0,1]$ the metric $r^2d\theta^2+r^{2q}g_\theta$ on
$[0,2\pi]\times F$ induces a smooth metric on $M_\phi$. Thus
$$dr^2+r^2d\theta^2+r^{2q}g_\theta$$ defines a smooth metric on
$(0,1]\times M_\phi$. The metric completion of $(0,1]\times M_\phi$
adds a single point at $r=0$.  Denote this completion by $B(F,\phi,q)$. We call a metric space which is bilipschitz homeomorphic to $B(F,\phi,q)$ a  
     {\bf $B(q)$-piece} or simply {a} {\bf $B$-piece}. 

A $B(q)$-piece such that $F$ is a disc   is called a {\bf $D(q)$-piece} or simply
 {a}   {\bf $D$-piece}.   \index{geometric decomposition!$D$-piece of a}
 
 A $B(q)$-piece such that $F$ is an annulus $S^1 \times [0,1]$ is called an $A(q,q)$-piece.  
\end{definition}

\begin{example} \label{example:B} The following is based on classical theory of plane curve singularities and is a generalization of the prototype given before Definition \ref{def:Bq}. Let $\sigma \colon Y \to \C^2$ be a sequence of blow-ups of points starting with the blow-up of the origin and let $E_i $ be a component of $\sigma^{-1}(0)$ which is not the component created by the first blow-up. Then the inner rate $q_{E_i}$ is strictly greater than $1$,  $B_i = \sigma({\mathscr{N}}(E_i))$ is a $B(q_{E_i})$-piece fibered by the restriction of a generic linear form and the fiber consists of a disc minus a finite union of open discs inside it.

This is based on the fact that  in suitable coordinates $(x,y)$, one may construct such a piece $B_i$  as a union of curves $ \gamma_{\lambda} \colon  y= \sum_{k=1}^m a_k x^{p_k} +  \lambda x^{q_{E_i}} $, where $p_1 < \ldots < p_m < q_{E_i}$. Here $ y= \sum_{k=1}^m a_k x^{p_k}$ is the common part of their Puiseux series and   the coefficient $\lambda \in \C^*$ varies in a compact disc minus a finite union of  open discs inside it. 

Notice that if  $E_i$ intersects exactly one other exceptional curve $E_j$, then one gets a $D(q_{E_i})$-piece.   If $E_i$ intersects exactly two  other curves $E_j$ and $E_k$,  one gets an $A(q_{E_i},q_{E_i})$-piece. 
\end{example}

\subsubsection{The $\mathbf A$-pieces}

\index{geometric decomposition!$A$-piece of a}
 
Again, we start with a prototype.
Choose $1 \leq q < q'$ in $\Q$ and  $0 < a 
   $ in $\R$ and let $Z \subset  \C^2$  be defined as the semi-algebraic set
   $$Z:=
   {\{(x,y)\subset\C^2: y=\lambda x^s, 
|\lambda|=a,  q\leq s \leq q' \}}\,.$$
Then for all $\epsilon >0$, the
intersection $Z^{(\epsilon)} = Z \cap \{|x|=\epsilon \}$ is a
thickened torus whose restriction of the function $x$ to
$Z^{(\epsilon)}$ defines a locally trivial fibration
$x \colon Z^{(\epsilon)} \to S^1_{\epsilon}$ whose fibers are flat
annuli with outer boundary of length $\Theta(\epsilon^q)$ and inner
boundary of length $\Theta(\epsilon^{q'})$.

\begin{definition}[\bf$\boldsymbol{A(q,q')}$-pieces]\label{def:Aqq'}
  Let $q,q'$ be rational numbers such that $1\le q  \leq q'$. Let $A$ be
  the Euclidean annulus $\{(\rho,\psi):1\le \rho\le 2,\, 0\le \psi\le
  2\pi\}$ in polar coordinates and for $0<r\le 1$ let $g^{(r)}_{q,q'}$
  be the metric on $A$:
$$g^{(r)}_{q,q'}:=(r^q-r^{q'})^2d\rho^2+((\rho-1)r^q+(2-\rho)r^{q'})^2d\psi^2\,.
$$ 
Endowed  with this metric, $A$ is isometric to the Euclidean annulus with
inner and outer radii $r^{q'}$ and $r^q$. The metric completion of
$(0,1]\times \S^1\times A$ with the metric
$$dr^2+r^2d\theta^2+g^{(r)}_{q,q'}$$ compactifies it by adding a single point at
$r=0$.  We call a metric space which is bilipschitz homeomorphic to
this completion an {\bf $A(q,q')$-piece} or simply  {an} {\bf $A$-piece}.
 \end{definition}

Notice that when $q=q'$,  this definition of  $A(q,q)$ coincides with that introduced in Definition \ref{def:Bq}. 

\begin{example}  \label{example:A}  Let $\sigma \colon Y \to \C^2$ be as in Example \ref{example:B} and let $T$ be its dual tree. As already mentioned in Remark \ref{rk:inner rates},  the inner rates along any path from the root vertex to a leaf of $T$ form a strictly increasing sequence.  In particular, any edge  $e$   in $T$ joins two vertices $v$ and $v'$, with  inner rates respectively $q$ and $q'$ with  $1 \leq q <q'$. Moreover,  the  semialgebraic set $Z = \sigma (N(v) \cap N(v'))$ is an $A(q,q')$-piece fibered by the restriction of a generic linear form  and is bounded by  the $B(q)$- and $B(q')$-pieces  $\sigma({\mathscr{N}}(v))$ and  $\sigma({\mathscr{N}}(v'))$.  

More generally, let $S \subset T$ be a string in $T$ which does not contain the root vertex of $T$.  let $1<q<q'$  be the two inner rates associated with the two vertices adjacent to $S$. Then  $Z = \sigma(N(S))$ is an $A(q,q')$-piece fibered by the restriction of a generic linear form. 
\end{example}

\begin{definition}[\bf Rate] \index{inner rate}
  The rational number $q$ is called the  {\bf rate} of $B(q)$ or  $D(q)$. The rational numbers $q$ and $q'$ are the two {\bf rates} of $A(q,q')$.
\end{definition}
\subsubsection{Conical pieces \normalfont{(or $B(1)$-pieces)}}

\index{geometric decomposition!conical piece of a}

\begin{definition}[\bf Conical pieces]\label{def:p3}
 Given a compact smooth $3$-manifold $M$,
  choose a Riemannian metric $g$ on $M$ and consider the metric
  $dr^2+r^2g$ on $(0,1]\times M$. The completion of this adds a point
  at $r=0$, giving a {\bf metric cone on $M$}.  We call a metric space which is bilipschitz homeomorphic to a metric cone a {\bf conical piece} or  a {\bf $B(1)$-piece}  (they were called $CM$-pieces in \cite{BirbrairNeumannPichon2014}).
\end{definition}

\begin{example}  \label{example:C} Let  $\sigma \colon Y \to X$ and $T$  be as in Example \ref{example:B} and let $v_1$ be the root vertex of $T$. Then $\sigma({{\mathscr{N}}}(v_1))$ is a conical piece. 
\end{example}

\subsection{Geometric decompositions of $\C^2$}

A geometric decomposition of a semi-algebraic  germ $(Y,0)$ consists of a decomposition of $(Y,0)$ as a union of  $A$, $B$ and conical pieces glued along their boundary components in such a way that the fibrations of $B$ and $A$ pieces coincide on the gluing. 

Examples \ref{example:B},  \ref{example:A} and  \ref{example:C} show that any sequence  $\sigma \colon Y \to \C^2$  of blow-ups of points starting with the blow-up of the origin defines a geometric decomposition of $(\C^2,0)$ whose $B$-pieces are in bijection with the exceptional curves  $E_i$ in $\sigma^{-1}(0)$ and the intermediate $A(q,q')$-pieces, $q<q'$ with the intersection points $E_i \cap E_j$. 

\begin{definition} \label{def:geometric decomposition C2}
We call this geometric decomposition of $(\C^2,0)$  the geometric decomposition  associated with $\sigma$. 
  \end{definition}

\begin{example} \label{example:geometric decomposition1} 

Consider the minimal resolution $\sigma$ of the curve germ $\gamma$ with Puiseux expansion $y=x^{3/2}+ x^{7/4}$.  Its resolution tree $T$, with exceptional curves  $E_i$ labelled in order of occurence in the sequence of blow-ups, is pictured on Figure \ref{fig:16}. Each vertex is also weighted by the corresponding  self-intersection $E_i^2$ and by  the inner rate $q_{E_i}$ in bold. The  inner rates  $q_{E_1} = 1, q_{E_2} = 2$ and $q_{E_3} = \frac{3}{2}$   are computed as in example \ref{example:inner rates 1} using the first characteristic exponent $\frac{3}{2}=[1,2]^+$. The two last inner rates are computed using the  characteristic Puiseux exponents  $\frac{3}{2}$ and $\frac{7}{4}$ as follows. Set $\frac{p_1}{q_1} = \frac{3}{2}$  and   $\frac{p_2}{q_2} = \frac{7}{4}$ and  write $\frac{p_2}{q_2} = \frac{p_1}{q_1} + \frac{1}{q_1} \frac{p'_2}{q'_2}$.  Then the two last inner rates are computed by using the continued fraction development   $\frac{p'_2}{q'_2} = [a_1,\ldots,a_r]^+$.   In our case, we  have  $\frac{7}{4}= \frac{3}{2} +\frac{1}{2}. \frac{1}{2}$, so $ \frac{p'_2}{q'_2} = \frac{1}{2} = [0,2]^+$. This gives $q_{E_4} = \frac{3}{2} + \frac{1}{1} = \frac{5}{2}$ and $q_{E_5} = \frac{3}{2} + \frac{1}{2} = \frac{7}{4}$. (Again, we refer to \cite{EisenbudNeumann1985} or \cite{Wall2004} for details on  these   computations).

  \begin{figure}[ht] 
\centering
 \begin{tikzpicture}
   \draw[thin](0,0)--(0,3);
    \draw[thin](0,1.5)--(1.5,1.5);
   
   \draw[thin](0, 3)--(1.5,3);
   \draw[fill](0,0)circle(2pt);\node(a)at(0.1,-0.2){$-3$};\node(a)at(0.2,0.2){\scriptsize{$E_1$}}; \node(a)at(-0.2,0){{\bf 1}};
   \draw[fill](0,1.5)circle(2pt);\node(a)at(0.35,1.3){$-3$};\node(a)at(0.2,1.7){\scriptsize{$E_3$}}; \node(a)at(-0.2,1.5){{$\bf\frac32$}};
   \draw[fill](0,3)circle(2pt);\node(a)at(0.35,2.8){$-1$};\node(a)at(0.2,3.2){\scriptsize{$E_5$}}; \node(a)at(-0.2,2.9){{$\bf\frac74$}};
   \draw[fill](1.5,1.5)circle(2pt);\node(a)at(1.3,1.2){$-2$};\node(a)at(1.3, 1.7){\scriptsize{$E_2$}}; \node(a)at(1.8,1.5){{\bf 2}};
   \draw[fill](1.5,3)circle(2pt);\node(a)at(1.3,2.7){$-2$};\node(a)at(1.3, 3.2){\scriptsize{$E_4$}};\node(a)at(1.8,3){{$\bf\frac52$}};

\end{tikzpicture}
 \caption{Geometric decomposition of $(\C^2,0)$ associated with the resolution of the curve $y=x^{3/2}+ x^{7/4}$ }\label{fig:16}
\end{figure}
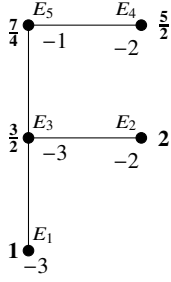

The underlying geometric decomposition of $(\C^2,0)$ consists of: 
\begin{itemize}
\item Five  $B$-pieces  $\sigma({\mathscr{N}}(E_i)), i=1,\ldots, 5$  in bijection with the  vertices of $T$    having rates respectively $1, 2, \frac{3}{2},  \frac{5}{2},  \frac{7}{4}$.  Notice that the $B$-pieces corresponding to $E_2$ and $E_4$ are respectively a    $D(2)$- and a $D(\frac{5}{2})$-piece since the corresponding vertices have valence one.
\item Four  $A$-pieces in bijection with the  edges of $T$: $\sigma(N(E_1) \cap N(E_3)),    \sigma(N(E_3) \cap N(E_2)), \sigma(N(E_3) \cap N(E_5))$ and $\sigma(N(E_5) \cap N(E_4))$ which are respectively an $A(1,\frac{3}{2})$-piece, an $A(\frac{3}{2},2)$-piece, an $A(\frac{3}{2} ,\frac{7}{4})$-piece and an   $A(\frac{7}{4}, \frac{5}{2})$-piece.
\end{itemize}
\end{example}

\begin{example} \label{example:geometric decomposition2}  The  trees $T_0$ and $T$ in Example \ref{example:inner rates 1} describe two different geometric decompositions of $(\C^2,0)$ associated with two resolutions of the curve $y=x^{5/3}$. 
\end{example}

   The following lemma   shows that one can simplify a  geometric decomposition by amalgamating pieces. In this lemma $\cong$ means bilipschitz equivalence
and $\cup$ represents gluing along appropriate boundary components 
by an isometry. $D^2$ means the standard $2$-disc. 
\begin{lemma} \label{amalgamation} [Amalgamation Lemma] \index{amalgamation}
\begin{enumerate}
\item\label{rule:D2} $B(D^2,\phi,q)\cong B(D^2,id,q)$; $B(S^1\times
  I,\phi,q)\cong B(S^1\times I,id,q)$.
\item\label{rule:AA} $A(q,q')\cup A(q',q'')\cong A(q,q'')$.
\item\label{rule:FF} If $F$ is the result of gluing a surface
  $F'$ to a disk $D^2$ along boundary components then
  $B(F',\phi|_{F'},q)\cup B(D^2,\phi|_{D^2},q)\cong B(F,\phi,q)$.
\item\label{rule:AD} $A(q,q')\cup B(D^2,id,q')\cong B(D^2,id,q)$.
\item\label{rule:CM}Each $B(D^2,id,1)$, $B(S^1\times I, id,
    1)$ or $B(F, \phi, 1)$ piece is a conical piece and a union of conical 
    pieces glued along boundary components is a conical piece.\qed
\end{enumerate}
\end{lemma}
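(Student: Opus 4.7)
The plan is to verify each of the five amalgamation rules by exhibiting explicit bilipschitz homeomorphisms between the standard models, exploiting the fact that the radial factor $r^q$ with $q\ge 1$ uniformly damps the small-scale distortion produced by any rearrangement of the fiber direction.

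Items (3) and (5) are essentially direct from the definitions, and items (2) and (4) reduce to a radial reparametrization. For (3), if $F=F'\cup_{\partial}D^2$ and $\phi$ restricts to each piece, then the metrics $dr^2+r^2d\theta^2+r^{2q}g_\theta$ on the two pieces agree on the common boundary (after a smooth matching of the profiles $g_\theta$), so their union is precisely $B(F,\phi,q)$. For (5) with $q=1$, the metric factors as $dr^2+r^2(d\theta^2+g_\theta)$, a genuine cone metric on $(M_\phi,d\theta^2+g_\theta)$; a finite gluing of such cones along their 3-dimensional links carries a Riemannian metric on the total link and hence is bilipschitz to the cone on that metric. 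For (2), the annular fibers of $A(q,q')$ have inner radius $r^{q'}$ and outer radius $r^q$, and gluing with $A(q',q'')$ along the $r^{q'}$ circle extends them to annular fibers with inner radius $r^{q''}$ and outer radius $r^q$, which by definition is $A(q,q'')$ after a smooth interpolation of the $\rho$-profile at the interface. Item (4) is the special case of (2) combined with (3): filling in the inner boundary of an $A(q,q')$ piece by $B(D^2,\mathrm{id},q')$ produces a disk fiber of size $\Theta(r^q)$, i.e.\ a $B(D^2,\mathrm{id},q)$ piece.

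The substantive step is item (1). For $F=D^2$, the Alexander trick yields a smooth isotopy $(\phi_s)_{s\in[0,1]}$ from $\mathrm{id}$ to $\phi$, and the map $\Psi(\theta,x)=(\theta,\phi_{\theta/2\pi}(x))$ descends to a diffeomorphism $M_\phi\to M_{\mathrm{id}}=D^2\times S^1$. For $F=S^1\times I$, the mapping class group rel boundary is generated by a Dehn twist $\tau(t,\alpha)=(t,\alpha+2\pi t)$, and the explicit shear $(\theta,t,\alpha)\mapsto(\theta,t,\alpha+t\theta)$ identifies $M_\tau$ diffeomorphically with $T^2\times I$. In both cases the induced map on $(0,1]\times M_\phi$ has a Jacobian whose off-diagonal entries mix the $\theta$-direction, of metric length scale $r$, with the $F$-direction, of length scale $r^q$, producing a shear distortion of order $r^{q-1}$.

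The main obstacle, and the crux of the proof, is obtaining uniform bilipschitz bounds as $r\to 0$ in item (1): the trivializing diffeomorphisms are not isometries, and the shear introduced by the Alexander straightening or by the change of coordinates that unties the Dehn twist could in principle blow up at the cone point. Because $q\ge 1$ and $r\in(0,1]$, the shear factor $r^{q-1}$ is bounded, so a direct computation of the pulled-back Riemannian metric gives Jacobians uniformly bounded above and below on all of $(0,1]\times M_\phi$; the map then extends continuously to the completion at $r=0$ and is the desired bilipschitz equivalence. Once this uniform estimate is in place, the routine verifications for (2)--(5) assemble to give the full statement.
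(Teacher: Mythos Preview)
Your proof is correct. The paper itself gives no proof of this lemma at all --- it ends the statement with a bare \qed{} and refers the reader to \cite[Sections~11 and~13]{BirbrairNeumannPichon2014} for details --- so there is nothing to compare against. Your explicit verification, in particular the uniform bilipschitz estimate for item~(1) via the $r^{q-1}$ bound on the shear introduced by the untwisting isotopy, supplies exactly the kind of argument the paper elides. Two minor remarks: in item~(1) for $F=S^1\times I$ you implicitly assume $\phi$ preserves each boundary component (otherwise $M_\phi$ has a single boundary torus and the statement fails topologically); this is the intended setting in all applications. In item~(4), the phrase ``special case of (2) combined with (3)'' is not literally accurate, but the direct argument you give immediately afterwards --- that the glued fiber is a flat disk of radius $\Theta(r^q)$ --- is correct and is what is actually needed.
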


\begin{example} Consider again the geometric decomposition of $(\C^2,0)$ introduced in Example \ref{example:geometric decomposition1}. We  can amalgamate    the $D(2)$-piece union the  $A(\frac32,2)$-piece to the neighbour  $B(\frac32)$-piece. We can also amalgamate the $D(\frac52)$-piece union the adjacent   $A(\frac74, \frac52)$-piece to the neighbour  $B(\frac74)$-piece.   This produces a geometric decomposition of $(\C^2,0)$  represented by the tree of Figure \ref{fig:17},  where we write inner rates only at  the central vertices of $B$-pieces and not at the amalgamated pieces. This decomposition has five pieces: a  conical $B(1)$ (black vertex), a $B(\frac32)$-piece (red vertices), a $B(\frac74)$-piece (blue vertices)  and intermediate $A(1,\frac32)$- and $A(\frac74,\frac52)$-pieces.

\begin{figure}[ht] 
\centering

 \begin{tikzpicture}
   \draw[thin](0,0)--(0,3);
    \draw[thin](0,1.5)--(1.5,1.5);
   
   \draw[thin](0, 3)--(1.5,3);
   \draw[fill](0,0)circle(2pt);\node(a)at(0.35,0){$-3$}; \node(a)at(-0.2,0){{\bf 1}};
   \draw[fill=red](0,1.5)circle(2pt);\node(a)at(0.35,1.3){$-3$}; \node(a)at(-0.2,1.5){{$\bf\frac32$}};
   \draw[fill=blue](0,3)circle(2pt);\node(a)at(0.35,2.8){$-1$};  \node(a)at(-0.2,2.9){{$\bf\frac74$}};
   \draw[fill=red](1.5,1.5)circle(2pt);\node(a)at(1.6,1.3){$-2$}; 
   \draw[fill=blue](1.5,3)circle(2pt);\node(a)at(1.6,2.8){$-2$};    

\end{tikzpicture}
 \caption{Amalgamated geometric decomposition}\label{fig:17}
\end{figure}
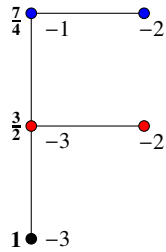
  \end{example}

\begin{remark}
Notice that  the new $B(\frac74)$-piece is now a $D$-piece. Then we  could continue the amalgamation  process by amalgamating iteratively all $D$-pieces. Of course, in the case of a geometric decomposition of $(\C^2,0)$, an iterative amalgamation of the pieces always produces eventually a unique conical piece which is the whole $(\C^2,0)$. 
\end{remark}

\begin{example} In the  tree $T_0$  of Example \ref{example:geometric decomposition2}, the amalgamation of the $D(2)$-piece union the $A(\frac53,2)$-piece to the neighbour $B(\frac53)$-piece forms a bigger $B(\frac53)$-piece. The amalgamation of the $A(\frac32, \frac32)$-piece with the two neighbour $A(1,\frac32)$- and  $A(\frac32,\frac53)$-pieces creates an intermediate $A(1,\frac53)$-piece  between the $B(1)$- and  the $B(\frac53)$-pieces. This creates a new geometric decomposition  of $(\C^2,0)$ with two $B$-pieces and one $A$-piece represented on Figure \ref{fig:18}. The red vertices correspond to the $B(\frac53)$-piece and the white one to the $A$-piece. 

 \begin{figure}[ht] 
\centering

 \begin{tikzpicture}
 

   \draw[thin ](-2,0)--(1,0);
  \draw[fill=red ] (-2,0)circle(2pt);
   \draw[fill =red] (-1,0)circle(2pt);
    \draw[fill =white] (0,0)circle(2pt);
     \draw[fill ] (1,0)circle(2pt);
     

\node(a)at(1,0.4){ $\mathbf 1$};
\node(a)at(-0.9,0.4){$   \frac{\mathbf 5}{\mathbf 3}$};

\node(a)at(-2,-0.4){$-3$};
\node(a)at(-1,-0.4){$-1$};
\node(a)at(0,-0.4){$-2$};
\node(a)at(1,-0.4){$-3$};

\end{tikzpicture} 
  \caption{Amalgamated geometric decomposition for the curve $y=x^{5/3}$ }\label{fig:18}
\end{figure}
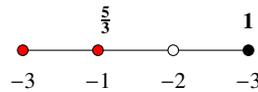
\end{example}
  \subsection{The Polar Wedge Lemma}

\index{polar wedge!Lemma}

Let $(X,0) \subset (\C^2,0)$  be a normal surface singularity. Consider a  linear projection  $\C^n \to \C^2$ which is generic for $(X,0)$ (e.g. \cite[Definition 2.4 ]{NeumannPedersenPichon2019-1}) and denote again by  $\ell \colon (X,0) \to (\C^2,0)$ its restriction to $(X,0)$. Let $\Pi$ be  the polar curve of $\ell$ and let  $\Delta=\ell(\Pi)$ be  its discriminant curve. 

\begin{proposition} [Polar Wedge Lemma] \cite[3.4]{BirbrairNeumannPichon2014}  \label{polar wedge lemma}  Consider the resolution $\sigma \colon Y  \to \C^2$ which resolves the base points of the family of projections of generic polar curves  $(\ell(\Pi_{\cal D}))_{{\cal D} \in \Omega}$. Let  $\Pi_0$ be an irreducible  component of $\Pi$ and let $\Delta_0 = \ell(\Pi_0)$.  Let $C$ be the irreducible component of $\sigma^{-1}(0)$ which intersects the strict transform of $\Delta_0^*$ by $\sigma$.  

 Let $W_{\Pi_0}$ be a polar wedge around $\Pi_0$ as introduced in Definition  \ref{def:polar wedge}.  Then $W_{\Pi_0}$ is a   $D(q_C)$-piece, and when $q_C>1$, $W_{\Pi_0}$ is  fibered by its  intersections with the real surfaces $\{h=t\} \cap X$, where $h\colon \C^n\to \C$ is a generic linear form.
\end{proposition}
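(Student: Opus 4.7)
The strategy is to project $W_{\Pi_0}$ via the generic projection $\ell$, identify the image as a standard $D(q_C)$-piece on the $(\C^2,0)$ side, and then pull the structure back using local coordinates on the Nash modification.

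For the first step, by Example~\ref{example:B} applied to the resolution $\sigma$, the set $\sigma(\mathscr{N}(C))$ is a $B(q_C)$-piece of $(\C^2,0)$ fibered by the restrictions of the generic linear form $z_1$. Since $\sigma$ resolves the base points of the family of projected polar curves, $\Delta_0^\ast$ meets $C$ transversally at a smooth point of the exceptional divisor, so $\Delta_0^\ast$ is a curvette of $C$. After possibly shrinking $\alpha$, we may write
\[
\ell(W_{\Pi_0}) \subset V := \Bigl\{ z_2 = \textstyle\sum_{k=1}^m a_k z_1^{p_k} + \mu\, z_1^{q_C} \colon |\mu - \mu_0| \le c \Bigr\},
\]
where $\mu_0\in\C^\ast$ and $p_1<\dots<p_m<q_C$ come from the Puiseux expansion of $\Delta_0$. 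This $V$ is manifestly a $D(q_C)$-piece fibered by the level sets $\{z_1=t\}$, with disk fibers of diameter $\Theta(|t|^{q_C})$.

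For the second step, in the Nash modification coordinates $(u,v)$ at $p := \Pi_0^\ast \cap \nu^{-1}(0)$ (so $\{v=0\} = \nu^{-1}(0)$ and $\{u=0\} = \Pi_0^\ast$), the polar wedge is $W_{\Pi_0} = \nu(\{|u|\le \alpha\})$. A local holomorphic expansion of $\nu$ composed with $\ell$ yields
\[
z_1\circ\nu = v^a\, U_1(u,v), \qquad \bigl(z_2 - \textstyle\sum_k a_k z_1^{p_k}\bigr)\circ\nu = u\,v^{a q_C}\, U_2(u,v),
\]
where $a\ge 1$ is the ramification order of $\ell|_{W_{\Pi_0}}$ along $\Pi_0$ and $U_1,U_2$ are holomorphic units. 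Passing to a single sheet of the branched cover and reparametrizing by $w=v^{a}U_1(0,v)^{1/a}$ puts $\ell\circ\nu$ into the standard parametrization of $V$, and the inner metric on $W_{\Pi_0}$ becomes bilipschitz to $dr^2 + r^2 d\theta^2 + r^{2q_C}|du|^2$ with $r=|w|$ and $\theta=\arg w$---the standard $D(q_C)$-metric. The fibers of $h = z_1$ on $W_{\Pi_0}$ correspond to $w$ constant, yielding the stated fibration.

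The main obstacle is justifying the exponent $aq_C$ in the local expansion. The plan is to use that the minimal good resolution $\pi$ factors through both $\sigma\circ\ell$ and $\nu$ (by Lemma~\ref{le:nash}), so the inner rate $q_C$ computed on the resolution side---which by Example~\ref{example:B} is precisely the order of contact of curvettes of $C$ measured via a generic linear form---matches, up to the ramification factor $a$, the $v$-order of vanishing of $(z_2-\sum a_k z_1^{p_k})\circ\nu$ on $X_\nu$. Once this local expansion is in place, the conversion between the inner metric of $X$ on $W_{\Pi_0}$ and the pulled-back Euclidean metric of $\C^2$ via $\ell\circ\nu$ follows from the nondegeneracy of the Jacobian of $\ell\circ\nu$ on $\{0<|u|\le\alpha,\,0<|v|\le\epsilon_0\}$, giving the required bilipschitz bounds and completing the proof.
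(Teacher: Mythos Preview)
The paper does not supply its own proof of this proposition; it is quoted from \cite[Proposition~3.4]{BirbrairNeumannPichon2014}. So let me assess your argument on its own terms.

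There is a genuine gap at the heart of your strategy. You want to transport the $D(q_C)$-structure from $\ell(W_{\Pi_0})\subset\C^2$ back to $W_{\Pi_0}\subset X$, but the projection $\ell$ is \emph{not} an inner bilipschitz map on $W_{\Pi_0}$: by Definition~\ref{local bilipschitz constant} the local bilipschitz constant $K$ is infinite precisely along $\Pi_0$, and it blows up as one approaches $\Pi_0$ inside the wedge. Your attempt to repair this in Nash coordinates does not succeed, because the ``conversion between the inner metric of $X$ on $W_{\Pi_0}$ and the pulled-back Euclidean metric of $\C^2$ via $\ell\circ\nu$'' is exactly what fails. The inner metric on $X\subset\C^n$ is induced by all $n$ ambient coordinates $z_1,\dots,z_n$, not just $z_1,z_2$; your local expansion only tracks $z_1,z_2$, and the Jacobian of $\ell|_X$ (equivalently of $\ell\circ\nu$ composed with the inverse of $\nu$) degenerates along $\Pi_0$. ``Nondegeneracy on $\{0<|u|\le\alpha,\ 0<|v|\le\epsilon_0\}$'' gives no uniform bound as $u\to 0$, so no bilipschitz constant.

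The idea that actually works, and that the present paper hints at in the proof of Lemma~\ref{lem:first part}, is to use a \emph{second} generic projection $\ell'=\ell_{\cal D'}$ chosen so that its polar curve $\Pi_{\cal D'}$ avoids $W_{\Pi_0}$. This is possible because the resolution factors through the Nash modification (Lemma~\ref{le:nash}), so the family $(\Pi_{\cal D})_{\cal D\in\Omega}$ has no base point on $E_i$ near $\Pi_0^\ast$; one simply moves $\cal D'$ in $\Omega$. Then Lemma~\ref{lem:local bil bound1} gives that $\ell'|_{W_{\Pi_0}}$ is an inner bilipschitz homeomorphism onto its image, and the equisingularity of the family $(\ell(\Pi_{\cal D}))_{\cal D\in\Omega}$ in Proposition~\ref{prop:generic} ensures that $\ell'(W_{\Pi_0})$ is still a $D$-piece in $\C^2$ with the same rate $q_C$ (it is a disc neighbourhood of a curvette of the same exceptional component $C$, cf.\ Example~\ref{example:B}). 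Your first step---identifying the image as a $D(q_C)$-piece via Example~\ref{example:B}---is correct and is used; what is missing is the switch to a projection that is bilipschitz on the wedge.
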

   
\subsection{The     geometric decomposition and the complete Lipschitz classification for the inner metric} \label{sec: inner geometric decomposition}

Let $(X,0)$ be a surface germ, let $\ell \colon (X,0) \to (\C^2,0)$ be a generic linear projection with polar curve $\Pi$ and let $W$ be a polar wedge around $\Pi$. Let $\sigma \colon Y \to \C^2$ be  the minimal  sequence of blow-ups which resolves the base points of the family of projected polar curves $(\ell(\Pi_{\cal D}))_{{\cal D} \in \Omega}$ and consider the  geometric decomposition of $(\C^2,0)$ associated with $\sigma$ (Definition \ref{def:geometric decomposition C2}). 

\begin{definition} 
Let $T$ be the resolution tree of $\sigma$. We call {\it $\Delta$-curve}     any component of $\sigma^{-1}(0)$ which  intersects the strict transform of the discriminant curve $\Delta$ of $\ell$, and we call {\it $\Delta$-node} of $T$ any vertex representing a $\Delta$-curve. 

We call {\it node} of $T$ any vertex which is either the root-vertex or a $\Delta$-node or a vertex with valence $\geq 3$. 
\end{definition}

Using Lemma \ref{amalgamation}, we amalgamate iteratively all the $D$-pieces of the  geometric decomposition of $(\C^2,0)$ associated with $\sigma$  with the rule that we never amalgamate a piece containing a component of the discriminant curve $\Delta$ of $\ell$. We then obtain a geometric decomposition of $(\C^2,0)$ whose pieces are in bijection with the nodes of $T$. 

\begin{definition} We call this  decomposition {\it the geometric decomposition of $(\C^2,0)$ associated with  the projection $\ell \colon (X,0) \to (\C^2,0)$}. 
\end{definition}

\begin{example}    \index{surface singularity!$E_8$} Consider again the germ  $(X,0)$ of the  surface  $E_8$  with equation $x^2+y^3+z^5=0$ and the projection $\ell \colon (x,y,z) \to (y,z)$. In order to compute the  geometric decomposition of $(\C^2,0)$ associated with $\ell$, we need to compute  a resolution graph of $\sigma \colon Y \to \C^2$ as defined above with its inner rates. We will first compute the minimal resolution of $(X,0)$ which factors through Nash modification. 

We first consider the graph $\Gamma$ of the minimal resolution  $\pi$ of $E_8$ as computed in the Appendix of the present notes.  We add to $\Gamma$   decorations by arrows  corresponding to the strict transforms  of the coordinate functions $x, y$ and $z \colon (X,0) \to (\C,0)$ and we denote the exceptional curves by $E_i, i=1,\ldots,8$ (the order is random).   All the self-intersections of the exceptional curves equal $-2$ so we do not  write them on the graph. We obtain the graph of Figure \ref{fig:19}.

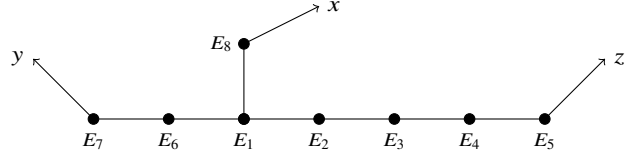
\begin{figure}[ht] 
\centering
\begin{tikzpicture}
   
     \draw[thin ](-1,0)--(5,0);
      \draw[thin ](1,0)--(1,1);
      \draw[thin,>-stealth,->](-1,0)--+(-0.8,0.8);
       \draw[thin,>-stealth,->](5,0)--+(0.8,0.8);
        
        \draw[thin,>-stealth,->](1,1)--+(1,0.5);
  \draw[fill   ] (1,0)circle(2pt);
  \draw[fill ] (3,0)circle(2pt);
     \draw[fill  ] (1,1)circle(2pt);      
        \draw[fill  ] (4,0)circle(2pt);
  \draw[fill  ] (0,0)circle(2pt);
   \draw[fill  ] (2,0)circle(2pt);
    \draw[fill ] (-1,0)circle(2pt);
   \draw[fill ] (1,0)circle(2pt); 
      \draw[fill] (5,0)circle(2pt); 
 
\node(a)at(1,-0.3){   \scriptsize{$E_ 1$}};
\node(a)at(2,-0.3){    \scriptsize{$E_ 2$}};
\node(a)at(3,-0.3){    \scriptsize{$E_3$}};
\node(a)at(4,-0.3){    \scriptsize{$E_ 4$}};
\node(a)at(5,-0.3){    \scriptsize{$E_ 5$}};
\node(a)at(0,-0.3){    \scriptsize{$E_ 6$}};
\node(a)at(-1,-0.3){    \scriptsize{$E_ 7$}};
\node(a)at(0.7,1){    \scriptsize{$E_ 8$}};

\node(a)at(-2,0.8){   $y$};
\node(a)at(2.2,1.5){   $x$};
\node(a)at(6,0.8){   $z$};
 
  \end{tikzpicture} 
  \caption{Resolution of the coordinates functions on the $E_8$ singularity}\label{fig:19}
\end{figure}

  Let $h \colon (X,0) \to (\C,0)$ be an analytic function, and let $(h \circ \pi) = \sum_{j=1}^8 m_j E_j + h^*$ be its total transform by $\pi$, so $m_j$ denotes the multiplicity of $h$ along $E_j$ and $h^*$ its strict transform by $\pi$. Then, for all $j=1,\ldots,8$, we have $(h \circ \pi).E_j=0$ (\cite[Theorem 2.6]{Laufer1971}). Using this, we compute    the total 
  transforms by $\pi$ of the coordinate
  functions $x, y$ and $z$:
  \begin{align*}
    (x \circ \pi) &= 15E_1 +12E_2+9E_3+6E_4+3E_5+10E_6+5E_7 +8E_8+   x^* \\
(y \circ \pi) &= 10E_1 +8E_2+6E_3+4E_4+2E_5+7E_6+4E_7 +5E_8+   y^* \\
(z \circ \pi) &= 6E_1 +5E_2+4E_3+3E_4+2E_5+4E_6+2E_7 +3E_8+   z^* 
  \end{align*}

  Set $f(x,y,z) = x^2+y^3+z^5$. The polar curve $\Pi$ of a generic
  linear projection $\ell\colon (X,0) \to (\C^2,0)$ has equation $g=0$
  where $g$ is a generic linear combination of the partial derivatives
  $f_x = 2x$, $f_y=3y^2$ and $f_z=5z^4$. The multiplicities of $g$ are
  given by the minimum of the compact part of the three divisors
  \begin{align*}
(f_x \circ \pi) &= 15E_1 +12E_2+9E_3+6E_4+3E_5+10E_6+5E_7 +8E_8+  f_x^* \\
(f_y \circ \pi) &=  20E_1 +16E_2+12E_3+8E_4+4E_5+14E_6+8E_7 +10E_8+  f_y^*  \\
(f_z \circ \pi) &= 24E_1 +20E_2+16E_3+12E_4+8E_5+16E_6+8E_7 +12E_8+ f_z^*
  \end{align*}
We then obtain that the total transform of $g$ is equal to:
$$(g \circ \pi) = 15E_1 +12E_2+9E_3+6E_4+3E_5+10E_6+5E_7 +8E_8+ \Pi^*\,.$$
In particular, $\Pi$ is resolved by $\pi$ and its strict transform
$\Pi^*$ has just one component, which intersects $E_8$.  

\begin{exercise} \begin{enumerate}
\item Prove that since the
multiplicities $m_8(f_x)=8$, $m_8(f_y)=10$ and $m_8(z)=12$ along $E_8$
are distinct, the family of polar curves, i.e., the linear
  system generated by $f_x, f_y$ and $f_z$, has a base point on
$E_8$. 
\item Prove that one  must blow up twice to get an exceptional curve $E_{10}$
along which $m_{10}(f_x)=m_{10}(f_y)$, which resolves the linear
system and,  that this gives  the resolution graph $\Gamma'$ of Figure \ref{fig:20}.
\end{enumerate}
\end{exercise}

 \begin{figure}[ht] 
\centering
\begin{tikzpicture}
   
   \draw[thin ](-1,0)--(5,0);
      \draw[thin ](1,0)--(1,3);

        \draw[thin,>-stealth,->](1,3)--+(-0.5,0.8);
  \draw[fill   ] (1,0)circle(2pt);
  \draw[fill   ] (3,0)circle(2pt);
     \draw[fill   ] (1,1)circle(2pt);      
        \draw[fill   ] (4,0)circle(2pt);
  \draw[fill    ] (0,0)circle(2pt);
   \draw[fill    ] (2,0)circle(2pt);
    \draw[fill  ] (-1,0)circle(2pt);
       \draw[fill  ] (5,0)circle(2pt); 
      
      \draw[fill=white ] (1,2)circle(2pt); 
\draw[fill=white ] (1,3)circle(2pt); 
    
  \node(a)at(0.6,1){   $-3$};
 \node(a)at(0.6,2){   $-2$};
\node(a)at(0.6,3){   $-1$};

\node(a)at(1.3,2){   \scriptsize{$ E_ 9$}};
\node(a)at(1.4,3){   \scriptsize{$ E_ {10}$}};

\node(a)at(1,4){   $\Pi^*$};
  \end{tikzpicture} 
 \caption{The graph $\Gamma'$}\label{fig:20}
\end{figure}
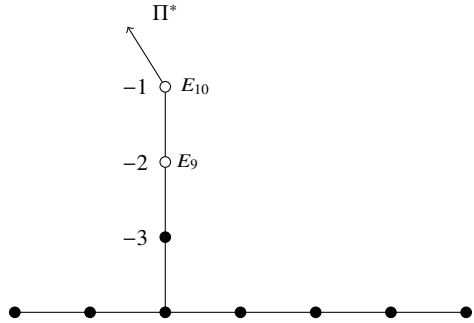
  
 Now, consider the computation of the resolution of $E_8$ by Laufer's method (see Appendix \ref{appendix}) which consists of computing the double over  $ \ell \colon( X,0) \to (\C^2,0)$ branched over the discriminant curve $\Delta \colon y^3+z^5=0 $. We start with the minimal resolution $\sigma' \colon Y' \to \C^2$ of $\Delta$, and we see  from  the computation of self-intersections given in  \ref{appendix} that we need to blow up five times the strict transform $\Delta^*$ in order to get the resolution graph $\Gamma'$. The resulting map  is the morphism $\sigma \colon Y \to \C^2$ which resolves the  base points of the family of projected polar curves $(\ell(\Pi_{\cal D}))_{{\cal D} \subset \Omega}$.  The morphism $\sigma$ is a composition of blow-ups of points and the  last  exceptional curve  created in the process is the $\Delta$-curve. Its inner rate is  $\frac{5}{3} + 5.\frac{1}{3} = \frac{10}{3}$. 
  
  The    geometric decomposition of $\C^2$ associated with $\ell$ is described by the resolution  tree of $\sigma$ with nodes weighted by the inner rates (Figure \ref{fig:21}). 
 
\begin{figure}[ht] 
\centering
 \begin{tikzpicture}
 

   \draw[thin ](-1,0)--(2,0);
    \draw[thin ](0,0)--(3.5,3.5);
  \draw[fill =red] (-1,0)circle(2pt);
   \draw[fill =red] (0,0)circle(2pt);
    \draw[fill =white] (1,0)circle(2pt);
     \draw[fill ] (2,0)circle(2pt);
     
        \draw[fill =white] (0.7,0.7)circle(2pt);
           \draw[fill =white] (1.4,1.4)circle(2pt);
              \draw[fill =white] (2.1,2.1)circle(2pt);
                 \draw[fill =white] (2.8,2.8)circle(2pt);
                    \draw[fill =red] (3.5,3.5)circle(2pt);
     

\node(a)at(2,0.4){ $\mathbf 1$};
\node(a)at(0,0.4){$   \frac{\mathbf 5}{\mathbf 3}$};
\node(a)at(3.2,3.6){ $\frac{\mathbf {10}}{\mathbf 3}$};

\node(a)at(2,-0.4){$-3$};
\node(a)at(1,-0.4){$-2$};
\node(a)at(0,-0.4){$-2$};
\node(a)at(-1,-0.4){$-3$};

\node(a)at(1.1,0.7){$-2$};
\node(a)at(1.8,1.4){$-2$};
\node(a)at(2.5,2.1){$-2$};
\node(a)at(3.2,2.8){$-2$};
\node(a)at(3.9,3.5){$-1$};

\end{tikzpicture} 
  \caption{Geometric decomposition of $(\C^2,0)$ associated with $\ell$ }\label{fig:21}
\end{figure}
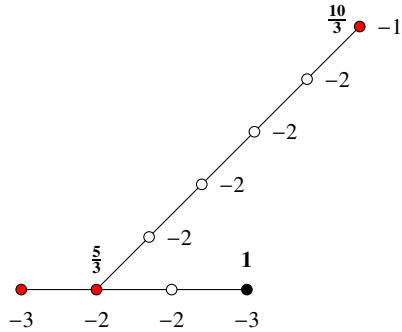

  Notice that the inner rate of the $\Delta$-curve, which is also the inner rate of the curve $E_{v_{10}}$ of $\pi^{-1}(0)$  can also be computing using the equations as follows.  For  a generic $(a,b) \in \C^2$, 
  $x+ay^2+bz^4=0$ is the equation of the polar curve $\Pi_{a,b}$ of a
  generic projection. The image $\ell(\Pi_{a,b}) \subset \C^2$ under
  the projection $\ell=(y,z)$ has equation
$$y^3 +a^2y^4 + 2aby^2z^4+z^5+b^2z^8=0$$
The discriminant curve $\Delta= \ell(\Pi_{0,0})$ has Puiseux expansion $y=(-z)^{\frac53}$,
while for $(a,b) \neq (0,0)$, we get for   $\ell(\Pi_{a,b})$ a Puiseux expansion
$y=(-z)^{\frac53} - \frac{a^2}3z^{\frac{10}3}+\cdots$. So the discriminant curve $\Delta$ has highest characteristic
exponent $\frac53$ and its contact exponent  with a generic  $\ell(\Pi_{a,b})$  is $\frac{10}3$. 

 \end{example}

 By construction, the projection $\ell(W)$  of a polar wedge  $W$ is a union of $D$-pieces which refines the geometric decomposition of $(\C^2,0)$ associated with $\ell$. By Lemma \ref{lem:local bil bound1}, which guarantees that $\ell$ is a local bilipschitz homeomorphism for the inner metric outside $W$, any piece of this geometric decomposition outside the polar wedge $W$  lifts to a piece of the same type.  We  obtain a geometric decomposition of $\overline{X \setminus W}$. Finally, the Polar Wedge Lemma  \ref{polar wedge lemma} says that $W$ is a union of $D$-pieces whose fibrations match with those of its neighbour $B$-pieces  in $\overline{X \setminus W}$. We  obtain the following result: 
 
 \begin{proposition} Each  $B(q)$-piece (resp.\ $A(q,q')$-piece) of the  geometric decomposition of $(\C^2,0)$ associated with $\ell$ lifts by $\ell$ to a union of  $B(q)$-pieces (resp.\ $A(q,q')$-pieces) in $(X,0)$ (with  the same rates). 
 \end{proposition}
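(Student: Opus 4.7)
The plan is to analyze each piece $P$ of the geometric decomposition of $(\C^2,0)$ associated with $\ell$ and show that its preimage $\ell^{-1}(P)\cap X$ splits into pieces of the asserted type. The key structural observation is that, by the very construction of the decomposition, only $B$-pieces at $\Delta$-nodes can meet the image $\ell(W)$ of the polar wedge; every $A$-piece (which corresponds to a string between nodes, and hence contains no base point of the family of projected polars) and every $B$-piece at a non-$\Delta$-node lies entirely outside $\ell(W)$.

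For such a piece $P$ disjoint from $\ell(W)$, I would work with the restriction of $\ell$ to $\overline{X\setminus W}$, which by Lemma \ref{lem:local bil bound1} is a finite cover that is a local inner bilipschitz homeomorphism with some uniform constant. Hence $\ell^{-1}(P)$ is a disjoint union of finitely many connected sets, each mapped bilipschitz onto $P$ by $\ell$. Since the classes $A(q,q')$ and $B(q)$ are defined up to bilipschitz equivalence, and since the generic linear form used to fiber the pieces is preserved by $\ell$, each component inherits the $A(q,q')$- or $B(q)$-structure with the same rates and a compatible fibration over $S^1$.

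For a $B(q)$-piece $P$ of $(\C^2,0)$ corresponding to a $\Delta$-node, I would decompose $\ell^{-1}(P)$ as the union of $\ell^{-1}(P)\cap\overline{X\setminus W}$ and $\ell^{-1}(P)\cap W$. The first part is handled exactly as in the previous paragraph and yields a disjoint union of $B(q)$-pieces with one or more boundary tori cut out. The second part is a disjoint union of polar wedges $W_{\Pi_0}$, one for each component $\Pi_0$ of $\Pi$ whose discriminant image $\Delta_0=\ell(\Pi_0)$ lies in $P$. By the Polar Wedge Lemma \ref{polar wedge lemma}, each $W_{\Pi_0}$ is a $D(q_C)$-piece, and because $C$ is exactly the exceptional curve indexing the $\Delta$-node defining $P$, one has $q_C=q$. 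Moreover the Polar Wedge Lemma ensures $W_{\Pi_0}$ is fibered by its intersections with the level sets of a generic linear form $h$, i.e.\ by the same fibration used to decompose the complement. Gluing a $D(q)$-piece into the hole of a truncated $B(q)$-piece via a matching fibration again yields a $B(q)$-piece by items \eqref{rule:FF} and \eqref{rule:AD} of the Amalgamation Lemma \ref{amalgamation}.

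The hardest step will be the last one: verifying that the bilipschitz identification provided by the Polar Wedge Lemma matches precisely the fibration coming from the complementary region under $\ell$ along the shared boundary, a disjoint union of tori. Once the two fiber structures are seen to agree on this interface (both are cut out by level sets of $h$), the amalgamation becomes a purely formal consequence of Lemma \ref{amalgamation}, and the conclusion that the lifted pieces are $B(q)$ (resp.\ $A(q,q')$) with the same rates follows at once.
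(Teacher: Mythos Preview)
Your proposal is correct and follows essentially the same route as the paper: Lemma \ref{lem:local bil bound1} to handle pieces away from the polar wedge, the Polar Wedge Lemma \ref{polar wedge lemma} to identify the wedge components as $D(q)$-pieces with the correct rate, and the Amalgamation Lemma to glue. The paper's proof is the short paragraph immediately preceding the proposition; your version is simply a more detailed unpacking of the same three ingredients.

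One small imprecision: when you write that each connected component of $\ell^{-1}(P)$ is ``mapped bilipschitz onto $P$ by $\ell$'', this is not quite right. The restriction of $\ell$ to such a component is a locally inner-bilipschitz \emph{finite branched cover} of $P$ (unbranched when $P$ avoids $\ell(W)$), not in general a global bilipschitz homeomorphism. This does not damage your conclusion: a locally bilipschitz finite unbranched cover of a $B(F,\phi,q)$-piece is a $B(F',\phi',q)$-piece (with $F'$ a cover of $F$), and similarly a cover of an $A(q,q')$-piece is again an $A(q,q')$-piece, since the rates are read off from fiber diameters, which are preserved. Just replace ``bilipschitz onto $P$'' by ``a locally bilipschitz finite cover of $P$'' and the argument goes through.
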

 
    Therefore, we obtain a geometric decomposition of $(X,0)$ into a union of $B$-pieces and $A$-pieces obtained by  lifting by $\ell$  the $A$- and $B$-pieces of the geometric decomposition of $(\C^2,0)$ associated with $\ell$.   
   
   \begin{definition} \label{defn:initial decomposition} We call this decomposition  the {\it initial geometric decomposition} of $(X,0)$. 
\end{definition}

\begin{example} \label{example:E8 outer}  \index{surface singularity!$E_8$} The initial geometric decomposition of the surface germ $E_8$ is represented by the  graph of Figure \ref{fig:22}. The vertices corresponding to the $B(\frac53)$-piece  and the  $B(\frac{10}3)$-piece are in red, the $\cal L$-node is in black, the white vertices correspond to the $A$-pieces.

  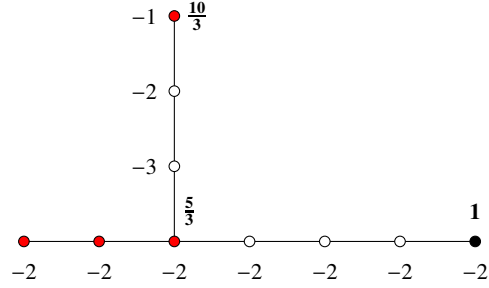
\begin{figure}[ht] 
\centering
 \begin{tikzpicture}

   \draw[thin ](0,0)--(6,0);
      \draw[thin ](2,0)--(2,3);
      
  \draw[fill] (6,0)circle(2pt);
   \draw[fill=white ] (5,0)circle(2pt);
    \draw[fill =red] (0,0)circle(2pt);
     \draw[fill =red] (1,0)circle(2pt);
        \draw[fill =red] (2,0)circle(2pt);

   \draw[fill =white] (3,0)circle(2pt);

   \draw[fill =white] (4,0)circle(2pt);

      \draw[fill =white] (2,1)circle(2pt);
          \draw[fill =white] (2,2)circle(2pt);
              \draw[fill =red] (2,3)circle(2pt);

\node(a)at(6,0.4){ $\mathbf 1$};

\node(a)at(2.2,0.4){$   \frac{\mathbf 5}{\mathbf 3}$};
\node(a)at(2.3,3){$   \frac{\mathbf {10}}{\mathbf 3}$};

\node(a)at(6,-0.4){$-2$};
\node(a)at(5,-0.4){$-2$};
\node(a)at(0,-0.4){$-2$};
\node(a)at(1,-0.4){$-2$};
\node(a)at(2,-0.4){$-2$};
\node(a)at(3,-0.4){$-2$};
\node(a)at(4,-0.4){$-2$};
\node(a)at(1.6,1){$-3$};
\node(a)at(1.6,2){$-2$};
\node(a)at(1.6,3){$-1$};
\end{tikzpicture} 
 \caption{Initial geometric decompostion for the singularity $E_8$}\label{fig:22}
\end{figure}
\end{example}

We will now amalgamate some pieces to define the {\it inner geometric decomposition} of $(X,0)$. We first  need to specify some special vertices in the resolution graph. 

\begin{definition} [Nodes]  \label{dfn:inner nodes}Let  $\pi \colon Z \to X$ be a resolution of $(X,0)$ which factors through the blow-up of the  maximal ideal  $e_0 \colon X_0 \to X$  and through the Nash modification.  Let $\Gamma$ be the dual resolution graph of $\pi$. 

We call {\it  $ \cal L$-curve}   any component of $\pi^{-1}(0)$ which corresponds to a component of $e_0^{-1}(0)$   and {\it $\cal L$-node}  any vertex of $\Gamma$ which represents an $\cal L$-curve. 

We call  {\it special $\cal P$-curve}  any component $E_i$ of $\pi^{-1}(0)$ which corresponds to a component of $\nu^{-1}(0)$ (i.e., it intersects the strict transform of the polar curve $\Pi)$ and such that 
\begin{enumerate}
\item   The curve $E_i$  intersects exactly two other components of $E_j$ and $E_k$ of $\pi^{-1}(0)$;
\item  The  inner rates   satisfy: $max(q_{E_j}, q_{E_k}) < q_{E_i}$.
\end{enumerate}
 We call  {\it special $\cal P$-node} any vertex of $\Gamma$ which represents a special $\cal P$-curve. 
 
We call {\it inner node} any vertex of $\Gamma$ which has at least three incident edges or which represents a curve with genus $>0$ or which is an $\cal L$- or a special $\cal P$-node. 
\end{definition}

Using Lemma \ref{amalgamation}, we now amalgamate iteratively  $D$ and $A$-pieces but with the rule that we never amalgamate  the special $A$-pieces with a neighbouring piece. 

\begin{definition} \label{def:inner decomposition}
We call this decomposition the {\it inner geometric decomposition} of $(X,0)$. \index{geometric decomposition! inner}
\end{definition}

The following is a straightforward consequence of this amalgamation rule. The  pieces of the inner geometric decomposition  of $(X,0)$  can be  described as follows: 

\begin{proposition} \label{prop:inner decomposition}  For each inner node $(i)$ of $\Gamma$, let $\Gamma_i$ be the subgraph of $\Gamma$ consisting of $(i)$ union any attached bamboo.  
\begin{enumerate}
\item The $B$-pieces are  the sets $B_i = \pi({\mathscr{N}}(\Gamma_i))$, in bijection with the inner nodes of $\Gamma$. Moreover, for each node $(i)$, $B_i$ is a $B(q_i)$-piece, where $q_i$ is the inner rate of the exceptional curve represented by  $(i)$  and the link $B_i^{(\epsilon)}$ is a Seifert manifold. 
\item The $A$-pieces are the sets $A_{i,j} = \pi( N(S_{i,j}))$ where $S_{i,j}$ is a string or an edge joining two nodes $(i)$ and $(j)$ of $\Gamma$. Moreover, $A_{i,j}$ is an $A(q_i,q_j)$-piece and the link $A_{i,j}^{(\epsilon)}$ is a thickened torus having a common boundary component with both $B_i^{(\epsilon)}$ and  $B_j^{(\epsilon)}$.  \end{enumerate}
In particular, the inner  geometric decomposition of $(X,0)$ induces a graph decomposition of the link $X^{(\epsilon)}$ whose Seifert components are the links  $B_i^{(\epsilon)}$ and the separating tori are in bijection with the  thickened tori $A_{i,j}^{(\epsilon)}$. 
\end{proposition}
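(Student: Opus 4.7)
The plan is to trace the amalgamation procedure described in Definition \ref{def:inner decomposition}, starting from the initial geometric decomposition of $(X,0)$ (Definition \ref{defn:initial decomposition}), and identify the surviving pieces. First I would refine the initial decomposition so that it has one piece associated to each vertex and each edge of $\Gamma$: at each vertex $v$ a $B(q_v)$-piece $\pi(\mathscr{N}(v))$, and at each edge $vv'$ an $A(q_v,q_{v'})$-piece $\pi(N(v)\cap N(v'))$. That this is a legitimate refinement of the initial decomposition follows by applying the recipe of Examples \ref{example:B}, \ref{example:A} and \ref{example:C} to the resolution data upstairs: the pieces downstairs that come from the geometric decomposition of $(\C^2,0)$ associated with $\ell$ lift, via Lemma \ref{lem:local bil bound1} outside a polar wedge and Proposition \ref{polar wedge lemma} inside the polar wedge, to the corresponding pieces at each vertex and edge of $\Gamma$.

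Next I would translate the amalgamation rules of Lemma \ref{amalgamation} into combinatorial moves on $\Gamma$: rule (\ref{rule:AD}) contracts a leaf (a $D$-piece) with its adjacent edge ($A$-piece); rule (\ref{rule:AA}) merges two consecutive $A$-pieces $A(q,q')\cup A(q',q'')$ into a single $A(q,q'')$, provided the three rates are monotone; rule (\ref{rule:FF}) absorbs a disk face into a neighbouring $B$-piece. Starting from a leaf and walking inward along a bamboo, the rates form a strictly monotone sequence by Remark \ref{rk:inner rates}, so these rules apply successfully at every valence-$2$ vertex encountered, until one reaches a vertex that blocks further amalgamation. The blocking vertices are precisely the inner nodes: at a valence-$\geq 3$ vertex or a positive-genus vertex the two bounding pieces are not of the right shape for rules (\ref{rule:AD}) or (\ref{rule:FF}) to apply; $\cal L$-nodes are excluded from amalgamation by construction; and at a special $\cal P$-node $(i)$ the inner rates $q_j,q_k$ of its two neighbours both lie strictly below $q_i$, so the rates go up and back down, and the monotonicity required by rule (\ref{rule:AA}) fails, preventing the two adjacent $A$-pieces from being merged.

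To conclude, I would describe the surviving pieces. For each inner node $(i)$, iterated application of rules (\ref{rule:FF}) and (\ref{rule:AD}) absorbs the $B(q_i)$-piece at $v=(i)$ together with all the $A$- and $D$-pieces coming from bamboos attached to $(i)$, producing a single $B(q_i)$-piece equal by construction to $\pi(\mathscr{N}(\Gamma_i))$. For each string $S_{i,j}$ joining two inner nodes, iterated application of rule (\ref{rule:AA}) merges all intermediate $A$-pieces into one $A(q_i,q_j)$-piece, namely $A_{i,j}=\pi(N(S_{i,j}))$. The Seifert structure of $B_i^{(\epsilon)}$ follows from the classical description of $\pi(\mathscr{N}(\Gamma_i))$ as the link of a plumbing along a tree of rational curves with a single branch vertex, with Seifert fibration induced by the generic linear form used to define the $B$-pieces in Example \ref{example:B}; and $A_{i,j}^{(\epsilon)}$ is a thickened torus as the boundary of a plumbing along a chain.

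The main obstacle, and the only step that is not formal bookkeeping with Lemma \ref{amalgamation}, is the equivalence ``vertex is not inner $\Longleftrightarrow$ amalgamation can be pushed across it''. The delicate direction is that amalgamation cannot be pushed across a special $\cal P$-node, which is exactly where the non-monotonicity of rates spoils rule (\ref{rule:AA}); this is what forces special $\cal P$-nodes to be included in the list of inner nodes in Definition \ref{dfn:inner nodes} and is the reason the resulting decomposition really is invariant of further amalgamation. Once this combinatorial dichotomy is established, the remaining assertions reduce to standard plumbing theory.
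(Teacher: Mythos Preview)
Your argument is correct and supplies precisely the detail the paper omits: the paper simply declares Proposition~\ref{prop:inner decomposition} a ``straightforward consequence of this amalgamation rule'' and gives no further proof. Your observation that amalgamation genuinely halts at a special $\cal P$-node because rule~(\ref{rule:AA}) requires monotone rates is a welcome justification for what Definition~\ref{def:inner decomposition} imposes by fiat (``we never amalgamate the special $A$-pieces''). Two small slips in your final paragraph: the central curve $E_i$ at an inner node may have positive genus, so ``tree of rational curves'' is too restrictive (the Seifert conclusion still holds, by standard plumbing theory for star-shaped graphs with arbitrary central genus); and the Seifert fibration on $B_i^{(\epsilon)}$ is the $S^1$-bundle structure coming from the plumbed disk bundles, not the fibration induced by the generic linear form --- the latter is a fibration \emph{over} $S^1$ with $2$-dimensional fibers, which is a different object.
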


\begin{remark} \label{rk:thin-thick decomposition}
The inner geometric decomposition is a refinement of the thick-thin decomposition. Indeed, the thick part is the union of the $B(1)$-pieces and adjacent $A(1,q)$-pieces, and the thin part is the union of the remaining pieces. 
\end{remark}

\begin{example}   \index{surface singularity!$E_8$} The inner geometric decomposition of the surface germ $E_8$ is represented by the  graph of Figure \ref{fig:23}.  The vertices corresponding to the $B(\frac53)$-piece are in red, the $\cal L$-node is in black, the white vertices correspond to the $A$-piece.

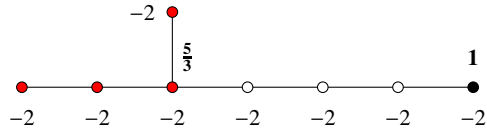
\begin{figure}[ht] 
\centering
 \begin{tikzpicture}

   \draw[thin ](0,0)--(6,0);
      \draw[thin ](2,0)--(2,1);
  \draw[fill] (6,0)circle(2pt);
   \draw[fill=white ] (5,0)circle(2pt);
    \draw[fill =red] (0,0)circle(2pt);
     \draw[fill =red] (1,0)circle(2pt);
        \draw[fill =red] (2,0)circle(2pt);

   \draw[fill =white] (3,0)circle(2pt);

   \draw[fill =white] (4,0)circle(2pt);

      \draw[fill =red] (2,1)circle(2pt);

\node(a)at(6,0.4){ $\mathbf 1$};
\node(a)at(2.2,0.4){$   \frac{\mathbf 5}{\mathbf 3}$};

\node(a)at(6,-0.4){$-2$};
\node(a)at(5,-0.4){$-2$};
\node(a)at(0,-0.4){$-2$};
\node(a)at(1,-0.4){$-2$};
\node(a)at(2,-0.4){$-2$};
\node(a)at(3,-0.4){$-2$};
\node(a)at(4,-0.4){$-2$};
\node(a)at(1.6,1){$-2$};
\end{tikzpicture} 
  \caption{Inner geometric decompostion for the singularity $E_8$}\label{fig:23}
\end{figure}

\end{example}

\begin{exercise} Draw the resolution graph with inner rates at inner nodes representing the inner geometric decomposition of the surface germ $z^2+f(x,y)=0$ where $f(x,y)=0$ is the plane curve with Puiseux expansion $y=x^{\frac32}+ x^{\frac74}$. 
\end{exercise}

 \begin{example}  \label{example:minimal1} Here is an example with a special $\cal P$-node. This is a minimal surface singularity (see \cite{Kollar1985}).  Minimal singularities  are special rational singularities which play a key role in resolution theory of surfaces, and they also share a  remarkable metric property, as shown in \cite{NeumannPedersenPichon2019-2}:  they are Lipschitz normally embedded, i.e., their inner and outer metrics are Lipschitz equivalent.  We refer to   \cite{NeumannPedersenPichon2019-2} for details on minimal singularities and for the computations on this particular example. 
 
 Consider the minimal surface singularity given by the minimal resolution graph of Figure \ref{fig:23bis}.  The $\cal L$-nodes are the black vertices.
  
\begin{figure}[ht] 
\centering
\begin{tikzpicture}
 
   \draw[thin ](-1,0)--(4,0);
      \draw[thin ](1,0)--(1,-3);
       
                \draw[thin ](0,1)--(0,0);
           \draw[fill=black   ] (0,1)circle(2pt);

           \draw[fill=black   ] (-1,0)circle(2pt);
           \draw[fill=white  ] (0,0)circle(2pt);
          \draw[fill=white ] (1,0)circle(2pt); 
           \draw[fill=black] (2,0)circle(2pt);
  \draw[fill=white ] (3,0)circle(2pt);
        \draw[fill =black ] (4,0)circle(2pt);
        
          \draw[fill=white ] (1,-1)circle(2pt);
         \draw[fill=white ] (1,-2)circle(2pt);
        \draw[fill =black ] (1,-3)circle(2pt);  
 

\node(a)at(-1,0.3){   $-4$};
\node(a)at(-0.3,0.3){   $-3$};
\node(a)at(-0.4,1){   $-2$};
\node(a)at(1,0.3){   $-3$};
\node(a)at(2,0.3){   $-3$};
\node(a)at(3,0.3){   $-2$};
\node(a)at(4,0.3){   $-2$};
\node(a)at(0.6,-1){   $-2$};
\node(a)at(0.6,-2){   $-2$};
\node(a)at(0.6,-3){   $-2$};
 
 

  \end{tikzpicture} 
   \caption{Minimal resolution of a minimal surface singularity}\label{fig:23bis}
\end{figure}
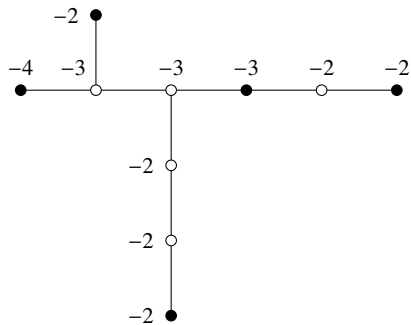

As shown in \cite{NeumannPedersenPichon2019-2}, one has to blow up once to obtain the minimal resolution which factors through Nash modification, creating the circled vertex on the graph of Figure \ref{fig:23ter}.  The arrows on this graph correspond to the components of the polar curve.  The inner    rates (in bold) are  computed in \cite{NeumannPedersenPichon2019-2}. We obtain two   special $\cal P$-nodes (the blue vertices).

\begin{figure}[ht] 
\centering
\begin{tikzpicture}
 
   \draw[thin ](-2,0)--(4,0);
      \draw[thin ](1,0)--(1,-3);
      \draw[thin,>-stealth,->](-2,0)--+(-0.8,0.8);
       \draw[thin,>-stealth,->](-2,0)--+(-0.8,-0.8);       
        \draw[thin,>-stealth,->](-2,0)--+(-1,-0.5);
          \draw[thin,>-stealth,->](-2,0)--+(-1,0.5);
          
          \node(a)at(-3.2,0.8){   $\Pi_1$};
       \node(a)at(-3.2,-0.8){   $\Pi_2$};

             \draw[thin,>-stealth,->](0,0)--+(-0.2,-1.1);
               \node(a)at(-0.2,-1.4){   $\Pi_6$};
               
                 \draw[thin,>-stealth,->](-1,0)--+(-0.2,-1.1);
          \draw[thin,>-stealth,->](-1,0)--+(0.2,-1.1);
               \node(a)at(-1.6,-1.1){   $\Pi_5$};

              \draw[thin,>-stealth,->](1,-1)--+(1.1,-0.2);
          \draw[thin,>-stealth,->](1,-1)--+(1.1,0.2);
               \node(a)at(2.1,-1.5){   $\Pi_4$};
               
            \draw[thin,>-stealth,->](3,0)--+(-0.2,-1.1);
          \draw[thin,>-stealth,->](3,0)--+(0.2,-1.1);
               \node(a)at(3.6,-1.1){   $\Pi_3$};

      \draw[thin ](-1,1)--(-1,0);
           \draw[fill=black   ] (-1,1)circle(2pt);

               \draw[fill=black ] (-2,0)circle(2pt);
           \draw[fill=white   ] (-1,0)circle(2pt);
            \draw[fill=white  ] (0,0)circle(4pt);
           \draw[fill=blue   ] (0,0)circle(2pt);
          \draw[fill=white ] (1,0)circle(2pt); 
           \draw[fill=black] (2,0)circle(2pt);
  \draw[fill=blue ] (3,0)circle(2pt);
        \draw[fill =black ] (4,0)circle(2pt);
        
          \draw[fill=white ] (1,-1)circle(2pt);
         \draw[fill=white ] (1,-2)circle(2pt);
        \draw[fill =black ] (1,-3)circle(2pt);  
 

\node(a)at(-2,0.3){   $\mathbf 1$};

\node(a)at(-1.2,0.3){   $\mathbf  2$};

\node(a)at(-1.2,1){   $\mathbf 1$};

\node(a)at(0,0.5){   $\frac{\mathbf  5}{\mathbf  2}$};

\node(a)at(1,0.3){   $\mathbf  2$};

\node(a)at(2,0.3){ $ \mathbf 1$};

\node(a)at(3,0.3){   $\mathbf  2$};
\node(a)at(4,0.3){   $\mathbf 1$};

\node(a)at(0.7,-3){ $\mathbf 1$}; 
 \node(a)at(0.7,-2){ $\mathbf  2$}; 

\node(a)at(0.7,-1){ $\mathbf  3$}; 
           
  \end{tikzpicture} 
  \caption{$\cal P$-nodes and resolution of the generic polar curve}\label{fig:23ter}
\end{figure}
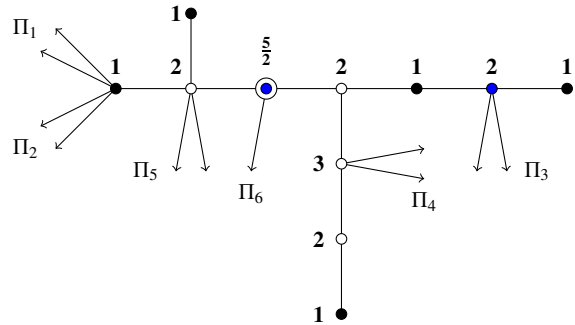

 We then obtain the inner geometric decomposition described  on Figure \ref{fig:23quatro}. There are nine inner nodes, which correspond to   five $B(1)$-pieces (in black), two special $A$-pieces  with rates $\frac{5}{2}$ and $2$ (in blue)  and two $B(2)$-pieces (in red). 
 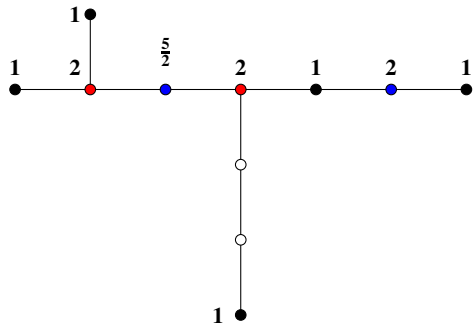
\begin{figure}[ht] 
\centering
\begin{tikzpicture}
 
   \draw[thin ](-2,0)--(4,0);
      \draw[thin ](1,0)--(1,-3);
     
      \draw[thin ](-1,1)--(-1,0);
           \draw[fill=black   ] (-1,1)circle(2pt);

               \draw[fill=black ] (-2,0)circle(2pt);
           \draw[fill=red   ] (-1,0)circle(2pt);
           \draw[fill=blue   ] (0,0)circle(2pt);
          \draw[fill=red ] (1,0)circle(2pt); 
           \draw[fill=black] (2,0)circle(2pt);
  \draw[fill=blue ] (3,0)circle(2pt);
        \draw[fill =black ] (4,0)circle(2pt);
        
          \draw[fill=white ] (1,-1)circle(2pt);
         \draw[fill=white ] (1,-2)circle(2pt);
        \draw[fill =black ] (1,-3)circle(2pt);  
 

\node(a)at(-2,0.3){   $\mathbf 1$};

\node(a)at(-1.2,0.3){   $\mathbf  2$};

\node(a)at(-1.2,1){   $\mathbf 1$};

\node(a)at(0,0.5){   $\frac{\mathbf  5}{\mathbf  2}$};

\node(a)at(1,0.3){   $\mathbf  2$};

\node(a)at(2,0.3){ $ \mathbf 1$};

\node(a)at(3,0.3){   $\mathbf  2$};
\node(a)at(4,0.3){   $\mathbf 1$};

\node(a)at(0.7,-3){ $\mathbf 1$}; 

           
  \end{tikzpicture} 
  \caption{Minimal resolution which factors through Nash transform}\label{fig:23quatro}
\end{figure}

  \end{example}
 
The terminology   {\it inner geometric decomposition} comes from the following result: 

  \begin{theorem}[\cite{BirbrairNeumannPichon2014} Complete Classification Theorem for inner Lipschitz geometry]\label{th:classification}
  The inner Lipschitz  geometry of $(X,0)$ determines and is uniquely
  determined by the following data:
  \begin{enumerate}
  \item\label{it:1.9.1} The graph decomposition of $X^{(\epsilon)}$ as the union of the links $B_i^{(\epsilon)}$ and $A_{i,j}^{(\epsilon)}$. 
   \item\label{it:1.9.2} for each $B_i^{(\epsilon)}$, the  inner rate  $q_i \geq 1$. 
   \item\label{it:1.9.2} for each $B_i^{(\epsilon)}$  such that $q_i >1$, the 
    homotopy class of the foliation by fibers of the fibration
    $z_1\colon B_i^{(\epsilon)}\to S_{\epsilon}^1$. 
  \end{enumerate}
  Moreover, these data are completely encoded  in the resolution graph $\Gamma$ whose nodes are weighted by the rates $q_i$ and by the multiplicities of a generic linear form $h$ along the  exceptional curves $E_i$ up to a  multiplicative constant. The latter is equivalent to the data of the maximal ideal $Z_{max}$ (see \cite{Nemethi1999}) up to a multiple.  
\end{theorem}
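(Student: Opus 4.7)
The plan is to split the biconditional into two implications: the sufficiency part (the listed data determine the inner Lipschitz geometry up to bilipschitz homeomorphism) and the invariance part (the inner Lipschitz geometry determines the data). The final ``moreover'' clause then follows by tracing how each piece of data is combinatorially read off from the weighted resolution graph.

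For sufficiency, my approach would be purely constructive. From the Seifert-fibered structure of each $B_i^{(\epsilon)}$, the rate $q_i$, and, when $q_i>1$, the homotopy class of the foliation by fibers of $z_1$, one can build a model metric on $B_i$ directly from Definition \ref{def:Bq}: the Seifert fibration supplies the mapping-torus structure $M_\phi$, the rate $q_i$ specifies the scaling $r^{2q_i}g_\theta$, and the homotopy class of the foliation fixes $\phi$ up to isotopy, which is enough to fix $B_i$ up to bilipschitz equivalence. Each intermediate piece $A_{i,j}$ is bilipschitz equivalent to $A(q_i,q_j)$ by Definition \ref{def:Aqq'}, which depends only on $(q_i,q_j)$. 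These pieces are then glued along common tori with matching fibrations, as ensured by Proposition \ref{prop:inner decomposition} and the Amalgamation Lemma \ref{amalgamation}. That the resulting metric space is bilipschitz equivalent to $(X,0)$ with its inner metric is the content of the construction of the initial geometric decomposition via a generic projection, combined with the Polar Wedge Lemma \ref{polar wedge lemma}: outside a polar wedge, the generic projection is a local bilipschitz homeomorphism, so each piece of the decomposition downstairs lifts to a standard piece of the same type; inside the polar wedge, the Polar Wedge Lemma provides the standard $D(q)$ model whose fibration matches the surrounding $B$-pieces.

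For invariance, I would use fast loops as the central obstruction. Let $\Phi\colon(X,0)\to(X',0)$ be an inner $K$-bilipschitz homeomorphism. For a homology class $\alpha\in H_1(X^{(\epsilon)})$, define $q(\alpha)$ to be the supremum of $q\ge 1$ such that $\alpha$ is representable by a loop of length $\Theta(\epsilon^q)$; this exists and is rational since the Seifert fibers of $B_i^{(\epsilon)}$ have length $\Theta(\epsilon^{q_i})$ and, together with the tori separating pieces, generate $H_1(X^{(\epsilon)})$. A $K$-bilipschitz map distorts lengths only by a bounded factor, so $q(\alpha)=q(\Phi_*\alpha)$ for every $\alpha$; hence the multiset $\{q_i\}$ of rates is a bilipschitz invariant. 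From the sublattices of $H_1$ spanned by classes of rate $\ge q$ one reconstructs which pieces $B_i^{(\epsilon)}$ lie in the same thin component of rate $\ge q$, and together with the topological JSJ decomposition of $X^{(\epsilon)}$ into Seifert pieces and separating tori this recovers the entire graph decomposition. The homotopy class of the fibration on each $B_i^{(\epsilon)}$ is finally detected by selecting, among generators of the rate-$q_i$ sublattice of $H_1(B_i^{(\epsilon)})$, the class realized by loops of minimal length; this is the Lipschitz invariance of fast loops of minimal length in their homology class pointed out in the Introduction.

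The hard part will be the recovery of the homotopy class of the foliation in the invariance direction. Passing from the bilipschitz invariant ``rate in each homology class'' to the actual fibration up to isotopy requires controlling how a fast loop inside one $B_i$ can be rerouted across an adjacent $A(q_i,q_j)$-piece without changing its homology or its length order, and then showing that no such rerouting produces a strictly shorter representative; this is the technical content of \cite[Section 14]{BirbrairNeumannPichon2014}. Once this step is in place, the final ``moreover'' clause is immediate: the resolution graph $\Gamma$ with self-intersections encodes the topological decomposition and the Seifert structures on each $B_i^{(\epsilon)}$, the inner rates $q_i$ at the inner nodes are combinatorially determined in the sense of Lemma \ref{rk:inner rate}, and the multiplicities of a generic linear form $h$ along the $\mathcal L$-curves pin down the homotopy class of the foliation by fibers of $z_1$. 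This information is equivalent, up to an overall multiple, to the maximal ideal cycle $Z_{\max}$, since $Z_{\max}$ simultaneously records the $\mathcal L$-node locations (as the support of its restriction to the $\mathcal L$-curves) and the generic-hyperplane multiplicities needed to recover the fibration classes.
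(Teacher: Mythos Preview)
The paper does not actually prove Theorem~\ref{th:classification}: it states the result and explicitly defers the proof to \cite{BirbrairNeumannPichon2014}, noting only in the Introduction that ``the invariance of the inner decomposition uses the Lipschitz invariance of fast loops of minimal length in their homology class (\cite[Section 14]{BirbrairNeumannPichon2014}).'' Your sketch is therefore being compared against a one-line hint, not a proof, and on that score your outline is faithful: you correctly split into a constructive sufficiency direction (building the model from $B(q)$- and $A(q,q')$-pieces via Definitions~\ref{def:Bq}--\ref{def:Aqq'} and the Polar Wedge Lemma) and an invariance direction driven by fast loops, and you identify the same hard step---recovering the foliation class from minimal-length representatives---and defer it to the same reference.

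Two cautions about the invariance sketch. First, your homology-rate function $q(\alpha)$ is the right idea, but the step ``from the sublattices of $H_1$ spanned by classes of rate $\ge q$ one reconstructs which pieces $B_i^{(\epsilon)}$ lie in the same thin component'' is doing real work that you have not justified: a priori a class $\alpha$ could be represented by a short loop that wanders through several pieces, and separating the graph-decomposition data from the purely homological rate data requires the minimal-length argument of \cite[Section~14]{BirbrairNeumannPichon2014}, not just the rate function. Second, in the ``moreover'' clause you say the multiplicities of $h$ along the $\mathcal L$-curves pin down the foliation class; in fact one needs the multiplicities along \emph{all} exceptional curves (up to a common multiple), since the fiber class in each Seifert piece $B_i^{(\epsilon)}$ is read off from the multiplicities on $E_i$ and its neighbours, not just on the $\mathcal L$-curves. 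This is why the statement speaks of $Z_{\max}$ up to a multiple rather than its restriction to the $\mathcal L$-curves.
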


\subsection{The outer Lipschitz decomposition}

We now define on $(X,0)$ a geometric decomposition of $(X,0)$ which is a refinement of the inner geometric decomposition. 

\begin{definition} \label{dfn:outer node} We use again the notations of Definition \ref{dfn:inner nodes}. 
We call {\it  $ \cal P$-curve}   any component of $\pi^{-1}(0)$ which corresponds to a component of $\nu^{-1}(0)$   and {\it $\cal P$-node} \index{node!$\cal P$-node}  any vertex of $\Gamma$ which represents a $\cal P$-curve. 
We call {\it outer node} 
any vertex of $\Gamma$ which has at least three incident edges or which represents a curve with genus $>0$ or which is an $\cal L$- or a  $\cal P$-node. 
\end{definition}

We start again with the initial geometric decomposition of $(X,0)$  (Definition \ref{defn:initial decomposition}). Using Lemma \ref{amalgamation}, we  amalgamate iteratively  $D$ and $A$-pieces but with the rule that we never amalgamate  any  $B$-pieces corresponding to a $\cal P$-node.   

\begin{definition} \label{def:outer decomposition}
We call this decomposition the {\it outer geometric decomposition}  \index{geometric decomposition!outer} of $(X,0)$. 
\end{definition}

Let us now state an analog of Proposition \ref{prop:inner decomposition}: 

\begin{proposition} \label{prop:outer decomposition} The  pieces of the outer geometric decomposition  of $(X,0)$   can be  described as follows. For each outer node $(i)$ of $\Gamma$, let $\Gamma_i$ be the subgraph of $\Gamma$ consisting of $(i)$ and any attached bamboo.  
\begin{enumerate}
\item The $B$-pieces are  the sets $B_i = \pi({\mathscr{N}}(\Gamma_i))$, in bijection with the outer nodes of $\Gamma$, and $B_i$ is a $B(q_i)$-piece. 
\item The $A$-pieces are the sets $A_{i,j} = \pi( N(S_{i,j}))$ where $S_{i,j}$ is a string or an edge joining two outer nodes $i$ and $j$ of $\Gamma$. Moreover, $A_{i,j}$ is an $A(q_i,q_j)$-piece.
 \end{enumerate}
\end{proposition}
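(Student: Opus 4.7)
The plan is to mirror the proof of Proposition \ref{prop:inner decomposition}, applying the outer amalgamation rule of Definition \ref{def:outer decomposition} to the initial geometric decomposition of Definition \ref{defn:initial decomposition}. In the initial decomposition, each vertex $v$ of the resolution graph $\Gamma$ contributes a $B(q_v)$-piece $B_v = \pi(\mathscr{N}(v))$ and each edge $e = \{v,w\}$ contributes an $A(q_v,q_w)$-piece $A_e = \pi(N(e))$, as in Examples \ref{example:B}--\ref{example:A}, with $q_v$ the inner rate of $E_v$; these identifications rely on the fact that $\ell$ is a local inner bilipschitz homeomorphism outside a polar wedge (Lemma \ref{lem:local bil bound1}) and on the Polar Wedge Lemma \ref{polar wedge lemma}, which together ensure that the pieces of the decomposition of $(\C^2,0)$ associated with $\ell$ lift to pieces of the same type with the same rates.

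First I would collapse the bamboos. If $v$ is not an outer node in the sense of Definition \ref{dfn:outer node}, then $v$ has valence $\leq 2$, represents a rational curve, and is neither an $\cal L$- nor a $\cal P$-node, so $B_v$ is a $D(q_v)$-piece (if $v$ is a leaf) or an $A(q_v,q_v)$-piece (if $v$ has valence two). Walking inward along a bamboo attached to an outer node $i$, iterated applications of rules (2), (3) and (4) of Lemma \ref{amalgamation} absorb the bamboo and its interstitial $A$-pieces into $B_i$, producing $\pi(\mathscr{N}(\Gamma_i))$ as a single $B(q_i)$-piece; this yields (1). Similarly, for a string $S_{i,j}$ of valence-two vertices joining two outer nodes, iterated application of rule (2), $A(q,q')\cup A(q',q'')\cong A(q,q'')$, amalgamates the chain of $A$-pieces along $S_{i,j}$ (together with the intermediate $A(q_v,q_v)$-pieces coming from the valence-two vertices) into a single $A(q_i,q_j)$-piece equal to $\pi(N(S_{i,j}))$, giving (2).

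The main obstacle is to justify the iterated application of rule (2), which requires monotonicity of the inner rates $q_{v_0}, \ldots, q_{v_k}$ along $S_{i,j}$. This will follow from Remark \ref{rk:inner rates} for the geometric decomposition of $(\C^2,0)$ associated with $\sigma$, transported to $\Gamma$ by the generic projection $\ell$ via Lemma \ref{lem:local bil bound1}: outside a polar wedge $\ell$ is a local inner bilipschitz homeomorphism and hence preserves inner rates, so a string $S_{i,j}$ of $\Gamma$ whose interior carries no component of the strict transform $\Pi^*$ corresponds to a string in the resolution of $\sigma$ along which rates are strictly monotonic by the classical Puiseux combinatorics. An interior vertex violating monotonicity would necessarily be a $\cal P$-node, hence an outer node by Definition \ref{dfn:outer node}, contradicting the hypothesis that $S_{i,j}$ is a string between two outer nodes. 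With this monotonicity in hand, the two amalgamation passes identify the outer geometric decomposition with the pieces described in the proposition.
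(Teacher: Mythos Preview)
Your approach is correct and matches the paper's: the paper does not give a separate proof of Proposition \ref{prop:outer decomposition}, presenting it simply as the analog of Proposition \ref{prop:inner decomposition}, which in turn is stated as ``a straightforward consequence of this amalgamation rule.'' Your explicit walk-through of the amalgamation procedure, together with the observation that any valence-two vertex at which the inner rates fail to be monotone must carry a component of $\Pi^*$ (hence is a $\cal P$-node and so already an outer node), is exactly the content implicit in the paper's one-line justification.
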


\begin{example}  The outer decomposition of the minimal singularity of Example \ref{example:minimal1}  is  described  on Figure \ref{fig:23cinquo}. There is exactly  one outer node which is not an inner node. So the outer decomposition is a refinement of the inner one: there is an extra $B(3)$-piece. 

 \begin{figure}[ht] 
\centering
\begin{tikzpicture}
 
   \draw[thin ](-2,0)--(4,0);
      \draw[thin ](1,0)--(1,-3);
     
      \draw[thin ](-1,1)--(-1,0);
           \draw[fill=black   ] (-1,1)circle(2pt);

               \draw[fill=black ] (-2,0)circle(2pt);
           \draw[fill=red   ] (-1,0)circle(2pt);
           \draw[fill=blue   ] (0,0)circle(2pt);
          \draw[fill=red ] (1,0)circle(2pt); 
           \draw[fill=black] (2,0)circle(2pt);
  \draw[fill=blue ] (3,0)circle(2pt);
        \draw[fill =black ] (4,0)circle(2pt);
        
          \draw[fill=red ] (1,-1)circle(2pt);
         \draw[fill=white ] (1,-2)circle(2pt);
        \draw[fill =black ] (1,-3)circle(2pt);  
 

\node(a)at(-2,0.3){   $\mathbf 1$};

\node(a)at(-1.2,0.3){   $\mathbf  2$};

\node(a)at(-1.2,1){   $\mathbf 1$};

\node(a)at(0,0.5){   $\frac{\mathbf  5}{\mathbf  2}$};

\node(a)at(1,0.3){   $\mathbf  2$};

\node(a)at(2,0.3){ $ \mathbf 1$};

\node(a)at(3,0.3){   $\mathbf  2$};
\node(a)at(4,0.3){   $\mathbf 1$};

\node(a)at(0.7,-3){ $\mathbf 1$}; 

\node(a)at(0.7,-1){ $\mathbf  3$}; 
           
  \end{tikzpicture} 
  \caption{ }\label{fig:23cinquo}
\end{figure}

\end{example}

\begin{example}    \index{surface singularity!$E_8$} The outer  geometric decomposition of the $E_8$ singularity coincides with the  initial geometric decomposition. So its graph is the one of Example \ref{example:E8 outer}. Notice that the $E_8$ example is very special.  In general the outer geometric decomposition has much less pieces than the  initial geometric decomposition. 
\end{example}

\begin{theorem} \label{thm:outer invariant}  The outer Lipschitz geometry of a normal surface singularity $(X,0)$ determines the geometric decomposition of $(X,0)$ up to self-bilipschitz homeomorphism.

Moreover, these data are completely encoded in the resolution graph $\Gamma$ where each   outer node is weighted by the  inner rate $q_{E_i}$ and the  self-intersection  $E_i^2$ of the corresponding exceptional curve $E_i$ and by the multiplicity $m_i$ of a generic linear form $h$ along $E_i$.  

Notice that the latter is equivalent to the datum of the maximal ideal  cycle $Z_{max} := \sum_i m_i E_i$  in the resolution $\pi$. (see \cite[2.I]{Nemethi1999} for   details on  $Z_{max}$). 
\end{theorem}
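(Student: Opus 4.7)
My plan is to upgrade the inner classification (Theorem \ref{th:classification}) into the outer one by detecting the extra $\cal P$-nodes via a refined bubble trick with jumps on the outer metric. Since outer distance dominates inner distance through integration along paths, the outer Lipschitz class of $(X,0)$ determines the inner Lipschitz class, hence by Theorem \ref{th:classification} it determines the resolution graph $\Gamma$ with its inner nodes, their inner rates $q_i$, the homotopy classes of the generic-linear-form fibrations on each $B_i^{(\epsilon)}$, and the maximal ideal cycle. In particular the multiplicities $m_i$ of a generic linear form and the self-intersections $E_i^2$ at inner nodes are recovered, as is the combinatorics of the inner decomposition. What remains is to locate, inside each amalgamated inner piece $B_i$, those vertices of $\Gamma_i$ which are $\cal P$-nodes (i.e., traversed by the polar curve of a generic projection) but are not inner nodes, and to read off their inner rates and linear-form multiplicities.

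The key technical device is the bubble trick with jumps of Section \ref{sub:with jumps}, now applied in dimension two on $(X,0)$ itself rather than on a plane curve. Pick a continuous semianalytic arc $p(t)$ on $X$ with $\|p(t)\|=\Theta(t)$ whose lift to the resolution $\pi\colon \widetilde X\to X$ is a curvette of some exceptional curve $E_v$ inside a chosen inner piece $B_i$. Form the outer horns $\mathcal H(p(t),r t^q)=\bigcup_t B_{d_o}(p(t), r t^q)$ and record the values of $q\in[1,+\infty)$ at which the germ $\mathcal H(p(t),r t^q)\setminus\{0\}$ undergoes a jump in its number of connected components (or more generally in the topology of its link). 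The purely metric sandwich argument used in the plane-curve case (via the inclusions ${\mathcal H}(p(t),rt/K^4)\subset {\mathcal H}'(p(t),rt/K^3)\subset\cdots$) shows that the set of such critical $q$ is an outer bilipschitz invariant, independent of the underlying analytic structure.

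Next I would identify the critical values. By Lemma \ref{lem:local bil bound1}, a generic projection $\ell\colon(X,0)\to(\C^2,0)$ is a local inner bilipschitz homeomorphism outside a polar wedge $W$, so outside $W$ outer and inner distances agree up to a constant and the jumps of $\mathcal H$ reproduce the jumps already seen on $\ell(p(t))$ in $\C^2$, which by the plane-curve bubble trick with jumps recover the inner rates of exceptional curves met by the lift of $\ell(p(t))$; these correspond to vertices along the path from the root to $v$ in $\Gamma$. Inside $W$, the Polar Wedge Lemma \ref{polar wedge lemma} shows that $W$ is a disjoint union of $D(q_C)$-pieces fibered by generic linear forms, where $q_C$ is the inner rate of the $\cal P$-curve. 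This means that for each branch of $\Pi$ striking an exceptional curve $E_w$ inside $\Gamma_i$, two geometrically distinct sheets of $\ell|_X$ become outer-close at rate exactly $q_w$, producing an additional jump of $\mathcal H(p(t),rt^q)$ at $q=q_w$ that is invisible to the inner metric. By varying the curvette arc through all vertices of $\Gamma_i$ and comparing the detected critical exponents with the inner rates already known from Theorem \ref{th:classification}, the $\cal P$-nodes of $\Gamma_i$ together with their inner rates are identified, hence the refinement of the inner decomposition into the outer one is recovered.

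Finally, one checks that the remaining numerical data at each new outer node—its self-intersection and the multiplicity of a generic linear form along it—are determined by the recovered inner rates together with the adjunction and compatibility relations at the node (the rates along edges increase, and the multiplicity system is determined by the requirement that $h\circ\pi$ have zero intersection with every exceptional curve). Amalgamating $D$- and $A$-pieces subject to the rule that no $\cal P$-node is absorbed then yields the outer geometric decomposition of Definition \ref{def:outer decomposition} canonically. The most delicate step is the second one above: making precise, in the two-dimensional setting and in the presence of a nontrivial branched-cover structure, that a jump of $\mathcal H$ at $q=q_w$ corresponds to a $\cal P$-node and not to some spurious outer coincidence between sheets; this is exactly where the sophisticated bubble-trick machinery of \cite{NeumannPichon2012} is needed to control the polar wedge geometry uniformly across all possible test arcs and all bilipschitz deformations of the outer metric.
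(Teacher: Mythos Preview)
The paper does not give a detailed proof of this theorem; it offers a short outline and refers to \cite{NeumannPichon2012}. So the relevant comparison is between your sketch and the paper's sketch, and they follow genuinely different strategies. You bootstrap from the inner classification (Theorem~\ref{th:classification}), which is legitimate since an outer bilipschitz homeomorphism is an inner one, and then try to locate the extra $\cal P$-nodes inside each already-recovered inner piece by detecting outer-metric horn jumps that are invisible to the inner metric. The paper's outline does not pass through Theorem~\ref{th:classification} at all: it uses a bubble trick directly on the outer metric to recover first the $B$-pieces of \emph{highest} inner rate, and then runs an inductive process with a second bubble trick, exploring the surface with bubbles of radius $\epsilon^q$ for \emph{decreasing} $q$. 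The paper in fact emphasises (both here and in the Introduction) that the outer proof is ``radically different'' from the inner one.

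Your sketch has a concrete gap at the detection step. You propose choosing arcs $p(t)$ whose lifts to $\widetilde X$ are curvettes of prescribed exceptional curves $E_v$ in $\Gamma_i$, and then ``varying the curvette arc through all vertices of $\Gamma_i$''. After a bilipschitz change of the outer metric you have no resolution and no notion of curvette; you need a purely metric way to manufacture test arcs that land in a specified piece, and a precise statement of \emph{which} topological invariant of $\mathcal H(p(t),rt^q)$ jumps on a four-real-dimensional germ (on a plane curve the horn is a union of discs and component count suffices; here the situation is far less clear). Your claim that ``outside $W$ outer and inner distances agree up to a constant'' is also not what Lemma~\ref{lem:local bil bound1} says: that lemma controls the local bilipschitz constant of the projection $\ell$, not the ratio $d_o/d_i$ on $X$. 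The paper's top-down induction avoids these issues because at the highest remaining rate the bubble picture is cleanest and one does not need to isolate $\cal P$-nodes inside a pre-existing inner piece; this is presumably why \cite{NeumannPichon2012} organises the argument that way.
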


The  statement of  Theorem   \ref{thm:outer invariant} has some similarities with that of Theorem \ref{th:classification}, but the proof is radically different. The proof of the  Lipschitz invariance of the outer geometric decomposition is based on a bubble trick which enables one to  recover  first the  $B$-pieces of the decomposition which have highest inner rate.  Then the whole decomposition is determined by an inductive process  based again on a second bubble trick  by exploring the surface with bubbles having radius $\epsilon^q$, with  decreasing rates $q$. The proof is delicate. We refer to \cite{NeumannPichon2012} for details.

\begin{remark} As a byproduct  of the bilipschitz invariance of the maximal ideal cycle $Z_{max}$ stated in Theorem \ref{thm:outer invariant} we obtain that the multiplicity $m(X,0)$ is an invariant of the Lipschitz geometry of a complex normal surface germ. Indeed, $m(X,0)$ is nothing but the sum of the multiplicities of $Z_{max}$ at the $\cal L$-nodes of $\Gamma$. 

In \cite{FFS}, the authors   prove a broad generalization of this fact:   the outer Lipschitz geometry of a surface singularity (not necessarily normal) determines its multiplicity 

The Lipschitz invariance of the multiplicity is no  longer true in higher dimension as proved in \cite{BFSV}. Actually, the proofs of the bilipschitz invariance in \cite{NeumannPichon2012}   and  \cite{FFS}  deeply  use  the classification of $3$-dimensional manifolds up to diffeomorphisms. 
 \end{remark}
 
 Using again bubble tricks,  we can prove that beyond the weighted graph $\Gamma$ and the maximal cycle $Z_{max}$, the outer Lipschitz geometry determines  a  large amount of other classical analytic invariants. These invariants are of two  types. The first is related  to the generic hyperplane sections and the blow-up of the
 maximal ideal, and the second is related to the polar and discriminant  curves  of generic plane projections and the
Nash modification: 

\begin{theorem} \cite{NeumannPichon2012} \label{th:invariants from geometry}
  If $(X,0)$ is a   normal  complex surface singularity, then the outer
  Lipschitz geometry on $X$ determines:
  \begin{enumerate}
  \item{\bf Invariants from generic hyperplane sections:}\label{Invariants from generic hyperplane sections}
    \begin{enumerate}
        \item\label{it:decoration} the decoration of the  resolution graph $\Gamma$  by arrows
      corresponding to the strict transform of a generic hyperplane
      section (these data are equivalent to   the maximal ideal cycle $Z_{max}$);
    \item\label{it:hyperplane section} for a generic hyperplane $H$,
      the outer Lipschitz geometry of the curve $(X\cap H,0)$.
    \end{enumerate}
  \item{\bf Invariants from generic plane projections:}\label{Invariants from generic plane projections}
    \begin{enumerate}
    \item\label{it:decoration1} the decoration of  the resolution  graph  $\Gamma$ by arrows corresponding to the strict transform of the polar
      curve of a generic plane projection;
    \item\label{it:discriminant} the embedded topology of the discriminant
      curve of a generic plane projection;
    \item\label{it:curves} the outer Lipschitz geometry of the polar
      curve of a generic plane projection.
    \end{enumerate}
  \end{enumerate}
\end{theorem}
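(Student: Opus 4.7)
The plan is to use Theorem \ref{thm:outer invariant} as the starting point: the outer Lipschitz geometry determines the resolution graph $\Gamma$ (with all self-intersections and genera fixed by the plumbed link topology) and, at each outer node $E_i$, the inner rate $q_{E_i}$ and the multiplicity $m_i$ of a generic linear form $h$. Everything asserted in the theorem must be extracted from this decorated combinatorial datum. I would proceed item by item.

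\textbf{Hyperplane section invariants.} The multiplicities $m_i$ at non-outer-node vertices of $\Gamma$ are reconstructed from the linear system $(h \circ \pi)\cdot E_j = \sum_i m_i (E_i\cdot E_j) + h^*\cdot E_j = 0$, using $h^*\cdot E_j = 0$ whenever $E_j$ is not an $\mathcal L$-curve and treating the known values at outer nodes as boundary data. Evaluating the same identity at $\mathcal L$-nodes then gives the number $h^*\cdot E_i$ of branches of a generic hyperplane section $X\cap H$ attached at $E_i$, proving item (1)(a) and, equivalently, recovering $Z_{\max}$. For item (1)(b), the embedded topological type of the plane curve obtained from $X\cap H$ by a generic linear projection is read off from the decorated graph together with these arrow decorations (the Puiseux data and pairwise contact orders of its branches are encoded by the inner rates along paths between the arrows and the root vertex). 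Theorem \ref{generic projection bilipschitz} makes this projection a bilipschitz homeomorphism, so Theorem \ref{th:main} identifies the outer Lipschitz geometry of $X\cap H$.

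\textbf{Polar and discriminant data.} The $\mathcal P$-nodes of $\Gamma$ lie among the outer nodes, so it remains to single them out and to count the arrows of the strict transform $\Pi^*$ at each of them. For this I would appeal to the classical adjunction/Jacobian formula expressing the multiplicity of the Jacobian divisor of $\ell$ along $E_i$ in terms of the $m_i$, the canonical cycle (computable from the self-intersections and genera via adjunction), and the intersection form on the exceptional divisor; the $\mathcal P$-nodes are precisely those where this computed polar multiplicity forces a positive intersection with $\Pi^*$, and the arrow counts follow from the relations $(\Pi\circ\pi)\cdot E_j = 0$ at non-$\mathcal P$ vertices. This gives item (2)(a). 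For item (2)(b), the embedded topological type of $\Delta = \ell(\Pi)\subset(\C^2,0)$ is reconstructed by running the Hirzebruch-Jung/Laufer procedure of Appendix \ref{appendix} in reverse, starting from $\Gamma$ enriched with the polar decoration and with the branching structure of $\ell$ encoded in the $m_i$. Finally, Proposition \ref{prop:generic}(\ref{cond:generic3}) combined with Theorem \ref{generic projection bilipschitz} shows that $\ell|_\Pi\colon \Pi \to \Delta$ is bilipschitz for the outer metric, so by Theorem \ref{th:main} the outer Lipschitz geometry of $\Pi$ is determined by the embedded topology of $\Delta$, yielding item (2)(c).

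\textbf{Main obstacle.} The hardest step is the identification of the $\mathcal P$-nodes and the combinatorial computation of the polar multiplicities purely from the decorated graph. The adjunction-based formula must be reconciled with the definition of $\mathcal P$-nodes via the Nash modification, and one has to invoke the equisingularity statement of Proposition \ref{prop:generic}(\ref{cond:generic1}) to guarantee that the outcome is independent of the (generic) projection chosen. A secondary subtlety is the inverse Hirzebruch-Jung construction for the discriminant in the presence of valence-$\geq 3$ nodes or components of positive genus adjacent to the polar data; there one must track the covering structure of $\ell$ branch by branch, which is essentially the analysis carried out explicitly in the $E_8$ example of the notes.
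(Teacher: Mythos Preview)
The paper does not actually prove this theorem: it is stated with a citation to \cite{NeumannPichon2012}, and the only hint given is the sentence ``Using again bubble tricks, we can prove\ldots''. So there is no detailed argument here to compare against; the intended proof proceeds by direct bubble-trick exploration of the outer metric, in the spirit of Section~\ref{sub:with jumps} and the proof sketch following Theorem~\ref{thm:outer invariant}, rather than by a purely combinatorial derivation from Theorem~\ref{thm:outer invariant}.

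Your strategy---take Theorem~\ref{thm:outer invariant} as a black box and extract the remaining invariants from the decorated graph---is reasonable in spirit, and parts of it are correct. Item (1)(a) does follow as you say: the relations $(h\circ\pi)\cdot E_j=0$ propagate the $m_i$ from outer nodes to all vertices, and evaluating at $\mathcal L$-nodes gives the arrow counts. Item (2)(c) also follows from (2)(b) exactly as you argue, via Proposition~\ref{prop:generic}(\ref{cond:generic3}) and Theorems~\ref{generic projection bilipschitz} and~\ref{th:main}.

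However, two steps are genuine gaps rather than routine omissions. First, for (2)(a), there is no off-the-shelf ``adjunction/Jacobian formula'' that outputs the polar multiplicities and the $\mathcal P$-arrow counts from the data $\{E_i^2,\,g(E_i),\,m_i\}$ alone; what is available relates the polar divisor to the canonical cycle and to the \emph{local degree of the cover} $\ell$ over each exceptional component, and the latter is additional information that you have not shown is determined by the outer Lipschitz class. In \cite{NeumannPichon2012} this step requires substantial bubble-trick work precisely because the sheet structure of $\ell$ over the pieces is not visibly encoded in the weighted graph of Theorem~\ref{thm:outer invariant}. Second, for (2)(b), your ``inverse Hirzebruch--Jung'' is not a general procedure: Laufer's method in Appendix~\ref{appendix} is specific to degree-two covers $x^2+f(y,z)=0$, and in general the passage from $\Gamma$ with polar decoration back to the embedded topology of $\Delta$ again requires knowing the branching structure of $\ell$ piece by piece. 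You flag both of these as the ``main obstacle'', which is accurate, but as written they are not filled in, and filling them in is essentially the content of the proof in \cite{NeumannPichon2012}.
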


 \section{Appendix:  the resolution of the E8 surface singularity} \label{appendix}
 
  \index{surface singularity!$E_8$}
  
 In this appendix, we explain how to  compute the   minimal resolution graph  of a  singularity with equation of the form $z^2+f(x,y)=0$ by Laufer's method,  described in \cite[Chapter 2]{Laufer1971}  (page 23 to 27 for the $E_8$ singularity).   Here we will just  introduce the method and perform  it in the particular case of $E_8$.  We invite the reader to study it in \cite{Laufer1971}.
 
Laufer's method is based on the   Hirzebruch-Jung algorithm  which resolves any surface singularity. 

\subsection{Hirzebruch-Jung algorithm}  We refer to the  paper  \cite{PopescuPampu2011}  of Patrick Popescu-Pampu for more details on this part.   The Hirzebruch-Jung algorithm consists in considering a  finite morphism $\ell \colon (X,0) \to (\C^2,0)$. Then one takes a resolution $\sigma \colon Y \to \C^2$ of  the discriminant curve $\Delta$ of $\ell$, one resolves the singularities of $\Delta$ and one considers  the pull-back $\widetilde{\sigma} \colon Z \to X$  of $\sigma$ by $\ell$. We then also have a finite morphism $\widetilde{\ell} \colon Z \to Y$ such that $\sigma \circ \widetilde{\ell} = \widetilde{\sigma}  \circ \ell$. Let $n \colon Z_0 \to Z$ be the normalization of $Z$.

The singularities of $Z_0$  are quasi-ordinary singularities relative to the projection $\widetilde{\ell} \circ n \colon Z_0 \to Y$ and with discriminant the singularities of the curve $\sigma^{-1}(\Delta)$, which are ordinary double points. Resolving these remaining singularities, one gets a morphism $\alpha \colon Z \to Z_0$. The composition $\pi = \widetilde{\sigma} \circ \alpha \colon Z \to X$  is a resolution of $(X,0)$ (in general far from being minimal). 
 
\subsection{Laufer's method} It resolves the surface germ   $(X,0) \colon x^2+f(y,z)=0$  by applying  Hirzebruch-Jung algorithm  with  the projection $\ell \colon (x,y,z) \mapsto (y,z)$ and then  by giving  an easy way to compute $Z$ from a specific resolution tree $T$ of the discriminant curve $\Delta \colon f(y,z)=0$ of $\ell$. 

Let us explain it on the singularity  $E_8$.    The discriminant $\Delta$ of the projection $\ell \colon (X,0) \to (\C^2,0)$  has  equation $f(y,z) = 0$ where $f(y,z) = y^3+z^5$. We start with the minimal  resolution $\sigma \colon Y \to \C^2$ of $\Delta$.  Its exceptional divisor  consists in four curves $E_1,\ldots,E_4$ labelled in their order of occurence in the sequence of blow-ups. Let $m_i$ be the multiplicity  of the function $f$ along $E_i$. 
The integer   $m_i$ is defined  as the exponent $u^{m_{i}}$ appearing in the total transform of  $f$ by $\sigma$ in coordinates centered at a smooth point of $E_i$, where $u=0$ is the local equation of $E_i$.
 So it can be computed when performing the sequence of blow-ups resolving $f=0$.  

By \cite[Theorem 2.6]{Laufer1971}   these multiplicities   can also be computed from the self-intersections $E_j^2$ using  the fact that for each $j=1,\ldots,4$, the intersection $(  \sigma^* f  ).E_j$  in $Y$ equals $0$, where $( \sigma^* f ) =  m_1 E_1 + \ldots+ m_4 E_4  +f^*$,  with $f^*$ the strict transform of $f=0$ by $\sigma$.  

One obtains the following resolution tree $T$ on which  each vertex $(i)$ is weighted   by the self intersection  $E_i^2$ and by the  multiplicity $m_i$  (into parenthesis).  The arrow represents the strict transform of $\Delta$.

\begin{figure}[ht] 
\centering
\begin{tikzpicture}
  
  \node[ ](b)at
  (-2,1){$T$}; 
  
    \draw[thin,>-stealth,->](0.5,1)--+(-.4,1);
           \node[ ](b)at (0.2,2.2){$(1)$}; 
   \node[ ]at (0.3,0.7){$-1$};  
     \node[ ]at (-0.7,0.7){$-3$};  
    
       \node[ ]at (2.3,0.7){$-2$};  
       \node[ ]at (1.3,0.7){$-2$};   
       
        \node[ ]at (0.7,1.3){ $(15)$};  
     \node[ ]at (-0.5,1.3){$(5)$};  
    
       \node[ ]at (2.5,1.3){$(3)$};  
       \node[ ]at (1.5,1.3){$(9)$};

  \draw[ xshift=-0.5cm,yshift=1cm,thin] (0+2pt,0)--(2.95,0);
  \draw[ xshift=-0.5cm,yshift=1cm, fill ](0,0)circle(2pt);
 \draw[  xshift=-0.5cm,yshift=1cm,fill] (1,0)circle(2pt);
 \draw[ xshift=-0.5cm,yshift=1cm, fill] (2,0)circle(2pt);
 \draw[xshift=-0.5cm,yshift=1cm, fill](3,0)circle(2pt);
  

\end{tikzpicture} 
\caption{The minimal resolution tree of $y^3+z^5=0$}\label{fig:28}
\end{figure}
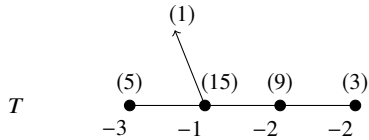

Now, we  blow up  any intersection point between two components of the total transform $(  \sigma^* f  )$ having both even multiplicities. In the case of $E_8$, all the multiplicities are even, so we blow up every double point of $( \sigma^* f )$. We obtain the resolution tree  $T'$ of Figure \ref{fig:29}.

 \begin{figure}[ht] 
\centering
\begin{tikzpicture}
  
  \node[ ](b)at
  (-2,1){$T'$}; 
   \draw[thin,](0.5,1)--+(0,1);
    \draw[thin,>-stealth,->](0.5,2)--+(-.5,1);
    
      \draw[  fill ](0.5,2)circle(2pt);

          \node[ ](b)at (-0.3,3.2){$(1)$};
           \node[ ](b)at (0.8,2.2){$(16)$}; 
   \node[ ]at (0.5,0.7){$-4$};   
    
 \node[ ]at (1.5,1.3){ $(24)$}; 
\node[ ]at (1.5,0.7){$-1$};  
        
\node[ ]at (3.5,1.3){ $(12)$};  
 \node[ ]at (3.5,0.7){$-1$};  
 
\node[ ]at (0.7,1.3){ $(15)$};  
\node[ ]at (0.5,0.7){$-4$}; 
          
\node[ ]at (-0.5,1.3){$(20)$};  
  \node[ ]at (-0.5,0.7){$-1$}; 
          
     \node[ ]at (-1.5,1.3){$(5)$};  
     \node[ ]at (-1.5,0.7){$-4$};
    
       \node[ ]at (4.5,1.3){$(3)$};  
         \node[ ]at (4.5,0.7){$-4$};  
         
       \node[ ]at (2.5,1.3){$(9)$};   
          \node[ ]at (2.5,0.7){$-4$};

  \draw[ xshift=-0.5cm,yshift=1cm,thin] (-1,0)--(5,0);
   \draw[ xshift=-0.5cm,yshift=1cm,thin] (0,0)--(5,0);
     \draw[ xshift=-0.5cm,yshift=1cm, fill ](-1,0)circle(2pt);

  \draw[ xshift=-0.5cm,yshift=1cm, fill ](0,0)circle(2pt);
 \draw[  xshift=-0.5cm,yshift=1cm,fill] (1,0)circle(2pt);
 \draw[ xshift=-0.5cm,yshift=1cm, fill] (2,0)circle(2pt);
 \draw[xshift=-0.5cm,yshift=1cm, fill](3,0)circle(2pt);
  \draw[xshift=-0.5cm,yshift=1cm, fill](4,0)circle(2pt);
   \draw[xshift=-0.5cm,yshift=1cm, fill](5,0)circle(2pt);
  

\end{tikzpicture} 
\caption{} \label{fig:29}
\end{figure}

In the particular case where there are no adjacent vertices having both odd multiplicities (this is the case in the  above tree $T'$), a resolution graph  $\Gamma$ of $(X,0) \colon x^2=f(y,z)$ is  obtained as follows:   $\Gamma$ is isomorphic to $T'$, and for any vertex $(i)$ of $T'$, the corresponding vertex of $\Gamma$ carries  self-intersection $2 E_i^2$ if the multiplicity $m_i$ is odd and $\frac{1}{2} E_i^2$ if it  is even. Moreover, the multiplicity of the function $f \circ \ell \colon (X,0) \to (\C,0)$ is $m_i$ if $m_i$ is odd and $\frac{1}{2} m_i$ if $m_i$ is even. In the case of $E_8$, we obtain the  resolution graph $\Gamma$ of Figure \ref{fig:30}, where the arrow represents the strict transform of  $ f \circ \ell   \colon (X,0) \to (\C,0)$. 

\begin{figure}[ht] 
\centering
\begin{tikzpicture}
  
   \draw[thin,](0.5,1)--+(0,1);
    \draw[thin,>-stealth,->](0.5,2)--+(-.5,1);
    
      \draw[  fill ](0.5,2)circle(2pt);

          \node[ ](b)at (-0.3,3.2){$(1)$};
           \node[ ](b)at (0.8,2.2){$(8)$}; 
   \node[ ]at (0.5,0.7){$-2$};   
    
 \node[ ]at (1.5,1.3){ $(12)$}; 
\node[ ]at (1.5,0.7){$-2$};  
        
\node[ ]at (3.5,1.3){ $(6)$};  
 \node[ ]at (3.5,0.7){$-2$};  
 
\node[ ]at (0.7,1.3){ $(15)$};  
\node[ ]at (-0.5,2){$-2$}; 
          
\node[ ]at (-0.5,1.3){$(10)$};  
  \node[ ]at (-0.5,0.7){$-2$}; 
          
     \node[ ]at (-1.5,1.3){$(5)$};  
     \node[ ]at (-1.5,0.7){$-2$};
    
       \node[ ]at (4.5,1.3){$(3)$};  
         \node[ ]at (4.5,0.7){$-2$};  
         
       \node[ ]at (2.5,1.3){$(9)$};   
          \node[ ]at (2.5,0.7){$-2$};

  \draw[ xshift=-0.5cm,yshift=1cm,thin] (-1,0)--(5,0);
   \draw[ xshift=-0.5cm,yshift=1cm,thin] (0,0)--(5,0);
     \draw[ xshift=-0.5cm,yshift=1cm, fill ](-1,0)circle(2pt);

  \draw[ xshift=-0.5cm,yshift=1cm, fill ](0,0)circle(2pt);
 \draw[  xshift=-0.5cm,yshift=1cm,fill] (1,0)circle(2pt);
 \draw[ xshift=-0.5cm,yshift=1cm, fill] (2,0)circle(2pt);
 \draw[xshift=-0.5cm,yshift=1cm, fill](3,0)circle(2pt);
  \draw[xshift=-0.5cm,yshift=1cm, fill](4,0)circle(2pt);
   \draw[xshift=-0.5cm,yshift=1cm, fill](5,0)circle(2pt);
  

\end{tikzpicture} 
\caption{The resolution graph $\Gamma$ for  $E_8$ with multiplicities of $y^3+z^5$} \label{fig:30}
\end{figure}
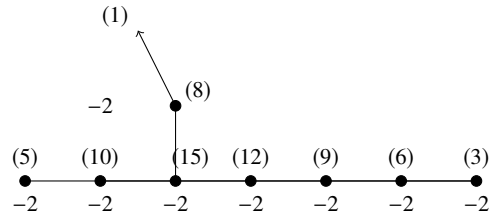

There are no $-1$-exceptional curves which could be blown down.  So forgetting $f$ and its multiplicities we get the  well known graph of the minimal resolution of $E_8$ (Figure \ref{fig:31}).

\begin{figure}[ht]
\centering
\begin{tikzpicture}
  
   \draw[thin,](0.5,1)--+(0,1);
          \draw[  fill ](0.5,2)circle(2pt);

          \node[ ](b)at (0.5,0.7){$-2$};

\node[ ]at (1.5,0.7){$-2$};  
 \node[ ]at (3.5,0.7){$-2$};  
 \node[ ]at (0.1,2){$-2$}; 

  \node[ ]at (-0.5,0.7){$-2$}; 
 
     \node[ ]at (-1.5,0.7){$-2$};
   
         \node[ ]at (4.5,0.7){$-2$};  
      
          \node[ ]at (2.5,0.7){$-2$};

  \draw[ xshift=-0.5cm,yshift=1cm,thin] (-1,0)--(5,0);
   \draw[ xshift=-0.5cm,yshift=1cm,thin] (0,0)--(5,0);
     \draw[ xshift=-0.5cm,yshift=1cm, fill ](-1,0)circle(2pt);

  \draw[ xshift=-0.5cm,yshift=1cm, fill ](0,0)circle(2pt);
 \draw[  xshift=-0.5cm,yshift=1cm,fill] (1,0)circle(2pt);
 \draw[ xshift=-0.5cm,yshift=1cm, fill] (2,0)circle(2pt);
 \draw[xshift=-0.5cm,yshift=1cm, fill](3,0)circle(2pt);
  \draw[xshift=-0.5cm,yshift=1cm, fill](4,0)circle(2pt);
   \draw[xshift=-0.5cm,yshift=1cm, fill](5,0)circle(2pt);
  

\end{tikzpicture} 
\caption{The resolution minimal graph for  $E_8$} \label{fig:31}
\end{figure}
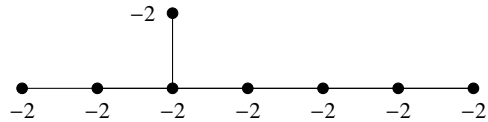

In the case where some consecutive vertices have  multiplicities which are odd, some  vertices of $T'$ may give two vertices in $\Gamma$. We refer to \cite{Laufer1971} for details.

\end{document}